\newcommand*{\Relbarfill@}{\arrowfill@\Relbar\Relbar\Relbar}
\newcommand*{\xeq}[2][]{\ext@arrow 0055\Relbarfill@{#1}{#2}}
\LetLtxMacro{\oldsqrt}{\sqrt}
\renewcommand{\sqrt}[2][]{\,\oldsqrt[#1]{#2}\,}
\def\@tocline#1#2#3#4#5#6#7{\relax
  \ifnum #1>\c@tocdepth 
  \else
    \par \addpenalty\@secpenalty\addvspace{#2}%
    \begingroup \hyphenpenalty\@M
    \@ifempty{#4}{%
      \@tempdima\csname r@tocindent\number#1\endcsname\relax
    }{%
      \@tempdima#4\relax
    }%
    \parindent\z@ \leftskip#3\relax \advance\leftskip\@tempdima\relax
    \rightskip\@pnumwidth plus4em \parfillskip-\@pnumwidth
    #5\leavevmode\hskip-\@tempdima
      \ifcase #1
       \or\or \hskip 1em \or \hskip 2em \else \hskip 3em \fi%
      #6\nobreak\relax
    \dotfill\hbox to\@pnumwidth{\@tocpagenum{#7}}\par
    \nobreak
    \endgroup
  \fi}
\newcommand{\bsh}{\backslash}
\def\greekbolds#1{%
 \@for\next:=#1\do{%
    \def\X##1;{%
     \expandafter\def\csname V##1\endcsname{\boldsymbol{\csname##1\endcsname}}
     }
   \expandafter\X\next;
  }
}
\def\make@bb#1{\expandafter\def
  \csname bb#1\endcsname{{\mathbb{#1}}}\ignorespaces}
\def\make@bbm#1{\expandafter\def
  \csname bb#1\endcsname{{\mathbbm{#1}}}\ignorespaces}
\def\make@bf#1{\expandafter\def\csname bf#1\endcsname{{\bf
      #1}}\ignorespaces} 
\def\make@gr#1{\expandafter\def
  \csname gr#1\endcsname{{\mathfrak{#1}}}\ignorespaces}
\def\make@scr#1{\expandafter\def
  \csname scr#1\endcsname{{\mathscr{#1}}}\ignorespaces}
\def\make@cal#1{\expandafter\def\csname cal#1\endcsname{{\mathcal
      #1}}\ignorespaces} 
\def\do@Letters#1{#1A #1B #1C #1D #1E #1F #1G #1H #1I #1J #1K #1L #1M
                 #1N #1O #1P #1Q #1R #1S #1T #1U #1V #1W #1X #1Y #1Z}
\def\do@letters#1{#1a #1b #1c #1d #1e #1f #1g #1h #1i #1j #1k #1l #1m
                 #1n #1o #1p #1q #1r #1s #1t #1u #1v #1w #1x #1y #1z}
\newcommand{\sel}{\mathrm{sel}}
\newcommand{\non}{\mathrm{non}}
\newcommand{\abs}[1]{\lvert #1 \rvert}
\newcommand{\zmod}[1]{\mathbb{Z}/ #1 \mathbb{Z}}
\newcommand{\wh}{\widehat}
\newcommand{\ul}{\underline}
\newcommand{\ddiv}{\mid}
\newcommand{\scc}{\mathrm{sc}}
\DeclareMathOperator{\Mass}{Mass}
\def\ad{\mathrm{ad}}
\def\op{\mathrm{op}}
\def\sg{\mathrm{sg}}
\def\wsg{\mathrm{wsg}}
\def\Ogp{\mathrm{O}}
\def\wide{\mathrm{wd}}
\def\res{\mathrm{res}}
\DeclareMathSymbol{\twoheadrightarrow} {\mathrel}{AMSa}{"10}
\DeclareMathOperator{\ord}{ord}
\DeclareMathOperator{\Clf}{Clf}
\DeclareMathOperator{\Pic}{Pic}
\DeclareMathOperator{\Hom}{Hom}
\DeclareMathOperator{\Gal}{Gal}
\DeclareMathOperator{\Mat}{Mat}
\DeclareMathOperator{\Tr}{Tr}
\DeclareMathOperator{\Nm}{N}  
\DeclareMathOperator{\Idl}{Idl}
\DeclareMathOperator{\RIdl}{RIdl}
\DeclareMathOperator{\Pol}{Pol}
\DeclareMathOperator{\SO}{SO}
\DeclareMathOperator{\Nr}{Nr}
\DeclareMathOperator{\Cl}{Cl}
\DeclareMathOperator{\Tp}{Tp}
\DeclareMathOperator{\SG}{SG}
\DeclareMathOperator{\SCl}{SCl}
\DeclareMathOperator{\Emb}{Emb}
\DeclareMathOperator{\Frac}{Frac}
\DeclareMathOperator{\WSG}{WSG}
\DeclareMathOperator{\Picent}{Picent}
\DeclareMathOperator{\Stab}{Stab}
\DeclareMathOperator{\REmb}{REmb}
\newcommand{\calRE}{\mathcal{RE}}
\newcommand{\ru}{\mathrm{ru}}
\newcommand{\cls}{{\mathrm{cl}}}
\newcommand{\tp}{{\mathrm{tp}}}
\newcommand{\Z}{\mathbb Z}
\newcommand{\Q}{\mathbb Q}
\newcommand{\R}{\mathbb R}
\newcommand{\F}{\mathbb F}
\newcommand{\wcO}{\widehat{\mathcal{O}}}
\newcommand{\whD}{\widehat{D}}
\newcommand{\whF}{\widehat{F}}
\newcommand{\whK}{\widehat{K}}
\newcommand{\whB}{\widehat{B}}
\newcommand{\whO}{\widehat{O}}
\newcommand{\wbZ}{\widehat{\mathbb{Z}}}
\newcommand{\whI}{\widehat{I}}
\newcommand{\whL}{\widehat{L}}
\newcommand{\wcE}{\widehat{\mathcal{E}}}
\newcommand{\wcRE}{\widehat{\mathcal{RE}}}
\newcounter{thmcounter} 
\numberwithin{thmcounter}{section}  
\newtheorem{thm}[thmcounter]{Theorem}
\newtheorem{lem}[thmcounter]{Lemma}
\newtheorem{cor}[thmcounter]{Corollary}
\newtheorem{prop}[thmcounter]{Proposition}
\theoremstyle{definition}
\newtheorem{defn}[thmcounter]{Definition}
\newtheorem{ex}[thmcounter]{Example}
\newtheorem{rem}[thmcounter]{Remark}
\newtheorem{sect}[thmcounter]{}
\newtheorem{assm}[thmcounter]{Assumptions}
\numberwithin{equation}{section}
\numberwithin{figure}{section}
\numberwithin{table}{section}
\newtheoremstyle{notitle}  
  {}
  {}
  {\itshape}
  {}
  {}
  {\ }
  {.5em}
  {}
\theoremstyle{notitle}
\title[Spinor type number]{The spinor type number formula for totally definite quaternion orders}
\author{Yucui Lin}
\address{(Lin) School of Mathematics and Statistics, Wuhan University, Luojiashan, 430072, Wuhan, Hubei, P.R. China}   
\email{yucui.lin@whu.edu.cn}
\author{Jiangwei Xue}
\address{(Xue) School of Mathematics and Statistics, Wuhan University, Luojiashan, 430072, Wuhan, Hubei, P.R. China}
\email{xue\_j@whu.edu.cn}
\begin{document}
\date{\today} 
\subjclass[2020]{11R52, 11R29, 11F72} 
\keywords{totally definite quaternion order, type number, class number, Brandt matrix, selectivity.}

\begin{abstract}
  Let $D$ be a totally definite quaternion algebra over a totally real number field $F$, and $\calO$ be an $O_F$-order (of full rank) in $D$. The type number $t(\calO)$ is an important arithmetic invariant of $\calO$ that counts the number of isomorphism classes of orders belonging to the same genus as $\calO$ (i.e.~locally isomorphic to $\calO$ at every finite place $\grp$ of $F$). The type number formula has been studied by Eichler, Peters, Pizer, Vigneras, K\"orner and many others. As the genus of $\calO$ further divides into spinor genera, one naturally seeks  a finer type number formula for the number of isomorphism classes of orders belonging to the same spinor genus of $\calO$. The main goal of this paper is to provide such a refinement for a large class of quaternion $O_F$-orders $\calO$ that includes all Eichler orders. This enables us to prove that $t(\calO)$ is divisible by the order of a quotient group $\WSG(\calO)$ of the Gauss genus group $\Cl^+(O_F)/\Cl^+(O_F)^2$ naturally attached to $\calO$. Similarly, we show that the trace of the $\grn$-Brandt matrix $\grB(\calO, \grn)$ is divisible by the class number $h(F)$ for any nonzero integral $O_F$-ideal $\grn$. In particular, the class number $h(\calO)=\Tr(\grB(\calO, O_F))$  is always divisible by $h(F)$ for such quaternion orders. This generalizes the divisibility result of $h(\calO)$ proved in a different way by Chia-Fu Yu and the second named author [Indiana Univ.~Math.~J., Vol.~70, No.~2 (2021)] in the case when $\calO$ is a maximal $O_F$-order in a totally definite quaternion algebra unramified at all the finite places. 
\end{abstract}

\maketitle  



\section{Introduction}

Let $F$ be a totally real number field. A quaternion $F$-algebra $D$ is called \emph{totally definite} if $D\otimes_{F,\sigma}\bbR$ is isomorphic to the Hamilton quaternion $\R$-algebra $\bbH$ for every embedding $\sigma: F \hookrightarrow \bbR$. Let $O_F$ be the ring of integers of $F$, and $\calO$ be an $O_F$-order (of full rank) in $D$. 
For each finite place $\grp$ of $F$, we write  $\calO_\grp$ for the $\grp$-adic completion of $\calO$. The profinite completion of $\calO$ is denoted by $\wcO$  and  the ring of finite adeles of $D$ is denoted by $\whD$. Two $O_F$-orders $\calO,\calO'$ in $D$ are said to \emph{belong to the same genus}  if there exists $x\in\whD^\times$ such that $\wcO'=x \wcO x^{-1}$, i.e., if $\calO_\grp$ and $\calO'_\grp$ are isomorphic for every finite place $\grp$ of $F$. This defines an equivalence relation on the set of all orders in $D$, and an equivalence class $\scrG$ is called a \emph{genus} (of orders) in $D$. The genus represented by $\calO$ is denoted by $\scrG(\calO)$. Similarly, two $O_F$-orders $\calO,\calO'$ in $D$ are said to be \emph{of the same type} if there exists $x\in D^\times$ such that $\calO'=x \calO x^{-1}$, i.e., they are isomorphic as $O_F$-orders. The isomorphism class of $\calO$ will be  called  the \emph{type} of $\calO$ and denoted by $[\calO]$. Let $\Tp(\scrG)$ be the set of all  types in $\scrG$. When $\scrG=\scrG(\calO)$,  we write $\Tp(\calO)\coloneqq\Tp(\scrG(\calO))$ and regard it as a pointed set with base point $[\calO]$. This is indeed a finite set \cite[\S17.1]{voight-quat-book} and its cardinality is called the \emph{type number} of $\calO$  and  denoted by $t(\calO) \coloneqq \abs{\Tp(\calO)}$.

Historically,  Eichler first gave a type number formula for Eichler orders of square-free levels in his pioneering work \cite{eichler:crelle55}. Some minor errors in Eichler's formula were corrected independently by Peters \cite{Peters1968} over fields of class number one and by Pizer \cite{Pizer1973} for general totally real number fields.  Pizer \cite{Pizer-1976}  also derived a type number formula for (general) Eichler orders over $\Q$, 
 and Vign\'eras \cite[Crollaire~V.2.6]{vigneras} obtained a type number formula for Eichler orders in an arbitrary $D$. 
Their works were further generalized by K\"orner \cite{korner:1987} to a type number formula that applies to general quaternion $O_F$-orders in $D$.   Recently, Boylan, Skoruppa and  Zhou \cite{Boylan-Skoruppa-Zhou-2019} give a new proof of Eichler's type number formula for maximal orders of definite quaternion $\Q$-algebras using the theory of Jacobi forms and reformulate the result in terms of Hurwitz class numbers, and further generalizations to definite quaternion Eichler orders of square-free levels over $\Q$ are made by Li, Skoruppa and Zhou \cite{Li-Skoruppa-Zhou-2022}.  On a related note, inspired by the Gauss number problem,  there is an interest in enumerating definite quaternion orders of small type numbers.  Jagy, Kaplansky, and Schiemann \cite{Jagy-Kaplansky-Schiemann} first draw up a list of Gorenstein quaternion $\Z$-orders of type number 1 (with the result formulated in terms of ternary quadratic forms, cf.~\cite[Theorems~22.1.1 and 25.4.6]{voight-quat-book}). Independently,  Kirschmer and Lorch \cite{Lorch-Kirschmer-2013, Kirschmer-Lorch-2016} confirm the result of Jagy et~al. and produce a list of all definite quaternion
orders of type number at most 2 over number fields. 

As we shall see very soon, inside a totally definite quaternion $F$-algebra $D$,  each genus $\scrG$ is   a disjoint union of several spinor genera.  The main goal of this paper is to derive a spinor type number formula that counts the number of types of orders belonging to the same spinor genus of $\calO$.  Such a spinor type number formula clearly refines the classical type number formula mentioned above. Our method relies heavily on K\"orner's formulation, and the primary new input is the \emph{optimal spinor selectivity theory} that is initiated by Chinburg and Friedman \cite{Chinburg-Friedman-1999} and studied by many others.  However, due to the limitation of the selectivity theory (\S\ref{sec:spinor-trace-formula} and \cite[\S5]{peng-xue:select}), we restrict ourselves  to \emph{residually unramified orders} (Definition~\ref{defn:res-unr-order}), which nevertheless is broad enough to include all Eichler orders by \cite[Lemma~24.3.6]{voight-quat-book}.  In a special case,  spinor type number formulas have already been obtained in \cite[Propositions~6.2 and 6.2.1]{xue-yu:ppas} for all maximal orders in the totally definite quaternion algebra $D_{\infty_1, \infty_2}$ (of discriminant $O_F$) over the quadratic real field $F=\Q(\sqrt{p})$,  and they carry special geometric meanings \cite[Theorem~5.9]{xue-yu:ppas} in terms of superspecial abelian surfaces over $\F_p$ with real Weil numbers $\pm\sqrt{p}$.

We recall the notion of \emph{spinor genus}  from \cite[\S1]{Brzezinski-Spinor-Class-gp-1983}, which refines that of the \emph{genus}. Let $\Nr: \whD^\times \to \whF^\times$ be the reduced norm map and put $\whD^1\coloneqq\ker(\Nr: \whD^\times \to \whF^\times)$. 
\begin{defn}\label{defn:spinor-genus}
    (1) Two orders $\calO, \calO'$  in $D$ are said to be in the \emph{same spinor genus} if there exists $x\in D^\times\whD^1$ such that $\wcO'=x \wcO x^{-1}$. The spinor genus of $\calO$ is denoted by $[\calO]_\sg$.
    
    (2) The set of all spinor genera within $\scrG$ is denoted by $\SG(\scrG)$. We write $\SG(\calO)$ for $\SG(\scrG(\calO))$ and regard it as a pointed set with base point $[\calO]_\sg$.

    (3) Let $\Tp_\sg(\calO)\coloneqq\{[\calO'] \mid [\calO']\subseteq[\calO]_\sg\}$ be the subset of $\Tp(\calO)$ consisting of all types within the spinor genus $[\calO]_\sg$. Its cardinality will be called the  \emph{spinor type number of $\calO$} and denoted by $t_\sg(\calO) \coloneqq \abs{\Tp_\sg(\calO)}$.
\end{defn}

\begin{rem}\label{rem:EC}
By definition, the map sending each $[\calO']$ to $[\calO']_\sg$ induces a  canonical surjection $\Psi_\tp: \Tp(\calO) \to \SG(\calO)$ between the two pointed sets, whose neutral fiber is precisely $\Tp_\sg(\calO)$.  Clearly, everything in Definition~\ref{defn:spinor-genus} remains valid if we drop the ``totally definite'' assumption on $D$ and instead assume that $D$ satisfies the Eichler condition (EC), that is, $F$ is allowed to be an arbitrary number field and $D$ is required to split at an infinite place of $F$. In this case, Brzezinski \cite[Proposition 1.1]{Brzezinski-Spinor-Class-gp-1983} shows that the map $\Psi: \Tp(\calO) \to \SG(\calO)$ is a bijection, that is, each spinor genus consists of exactly one type. This explains why we focus on the totally definite case instead. 
\end{rem}

Before diving straightly into presenting the spinor type number formula (which is expectedly quite involved),  we explain an interesting implication of it.  
The type set $\Tp(\calO)$ naturally decomposes into a disjoint union of the fibers of the map $\Psi_\tp: \Tp(\calO)\twoheadrightarrow \SG(\calO) $ as follows: 
\begin{equation}\label{eq:typ-refine}
    \Tp(\calO)=\bigsqcup_{[\calO']_\sg\in \SG(\calO)} \Tp_\sg(\calO'), \quad \text{and hence}\quad t(\calO)=\sum_{[\calO']_\sg\in \SG(\calO)} t_\sg(\calO'). 
\end{equation}
The spinor type number formula enables us to compute
$t_\sg(\calO')$ for each $[\calO']_\sg\in \SG(\calO)$.  It turns out that if $D$ is ramified at some finite place of $F$, then $t_\sg(\calO')$ stays invariant as $[\calO']_\sg$ ranges in $\SG(\calO)$, which immediately implies that the type number $t(\calO)=\abs{\Tp(\calO)}$ is divisible by $\abs{\SG(\calO)}$.  On the other hand, such a  divisibility result is too good to be true if $D$ is unramified at all the finite places of $F$; see Example~\ref{ex:indivisible} for a counterexample. Nevertheless, the spinor type number formula still yields a slightly weaker divisibility result as follows. From \eqref{eq:118},  the pointed set $\SG(\calO)$ is naturally equipped with an abelian group structure with $[\calO]_\sg$ as its identity element, which is called the \emph{spinor genus group of $\calO$}.  It enjoys a quotient group $\WSG(\calO)$ called the \emph{wide spinor genus group}, much in the same way that the narrow class group $\Cl^+(O_F)$ enjoys the (wide) class group $\Cl(O_F)$ as a quotient group (see \eqref{eqn:df-WSG}).
We shall show in Theorem~\ref{thm:div-typ-5.3} that the type number $t(\calO)$ is always divisible by $\abs{\WSG(\calO)}$.  In summary, we have the following divisibility result for the type number $t(\calO)$.

\begin{thm}[{Theorem~\ref{thm:div-typ-5.3}}]\label{thm:type-num-div}
    Let $\calO$ be a residually unramified $O_F$-order in a totally definite quaternion $F$-algebra $D$. Then $t(\calO)$ is divisible by $\abs{\WSG(\calO)}$. If $D$ is further assumed to be ramified at some finite place of $F$, then $t(\calO)$ is  divisible by $\abs{\SG(\calO)}$.
\end{thm}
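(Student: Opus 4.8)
The plan is to deduce the theorem from the spinor type number formula proved in this paper, combined with the decomposition~\eqref{eq:typ-refine}. Write $\Psi_\tp\colon\Tp(\calO)\twoheadrightarrow\SG(\calO)$ for the canonical surjection of pointed sets, $\pi\colon\SG(\calO)\twoheadrightarrow\WSG(\calO)$ for the quotient homomorphism (see \eqref{eqn:df-WSG}), $\Delta\coloneqq\ker\pi$, and $\Phi\coloneqq\pi\circ\Psi_\tp\colon\Tp(\calO)\twoheadrightarrow\WSG(\calO)$. Since $t_\sg(\calO')=\abs{\Psi_\tp^{-1}([\calO']_\sg)}$ and each $\pi^{-1}(w)$ is a single coset of $\Delta$, we have $\abs{\Phi^{-1}(w)}=\sum_{[\calO']_\sg\in\pi^{-1}(w)}t_\sg(\calO')$, and \eqref{eq:typ-refine} gives $t(\calO)=\sum_{w\in\WSG(\calO)}\abs{\Phi^{-1}(w)}$. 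It therefore suffices to prove that $\abs{\Phi^{-1}(w)}$ is independent of $w$: then $t(\calO)=\abs{\WSG(\calO)}\cdot\abs{\Phi^{-1}(w)}$, and $\abs{\Phi^{-1}(w)}$ is a nonnegative integer, being a cardinality. For the sharper conclusion when $D$ is ramified at a finite place, I would instead show that $t_\sg(\calO')$ is itself independent of $[\calO']_\sg$.

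The second ingredient is the shape of the spinor type number formula. A Burnside count for the action of the finite normalizer quotient $\calN\coloneqq N_{\whD^\times}(\wcO)/(\whF^\times\wcO^\times)$ on the right $\calO$-ideal class set — an action which preserves spinor genera, because conjugating $\wcO$ by a normalizing adele leaves its class in $\SG(\calO)$ unchanged — expresses
\[
  t_\sg(\calO')= a + \sum_{B} c_B\;\mathbf{1}_{[\calO']_\sg\,\in\, d_B\,\SG(\calO)_B},
\]
where the sum is over the finitely many \emph{spinor-selective} CM quadratic $O_F$-orders $B$ with $\Frac(B)\hookrightarrow D$; here $a$ collects the identity contribution (the spinor class number, divided by $\abs{\calN}$) together with all non-selective optimal embedding terms, and $a$ is independent of $[\calO']_\sg$ — this uses that the spinor class number is the same for every spinor genus in a fixed genus (see, e.g., \cite{vigneras} and \cite{Brzezinski-Spinor-Class-gp-1983}). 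The constants $c_B$ are nonnegative rationals, and $\SG(\calO)_B$ is an index-$2$ subgroup of $\SG(\calO)$, with $d_B\,\SG(\calO)_B$ the coset of spinor genera that ``see'' the optimal embeddings of $B$. By the optimal spinor selectivity theory of Chinburg--Friedman \cite{Chinburg-Friedman-1999} and its refinements (see \S\ref{sec:spinor-trace-formula} and \cite{peng-xue:select}), the selective sum is empty unless $D$ is unramified at every finite place of $F$ and $K\coloneqq\Frac(B)$ embeds into the spinor class field of $\calO$, in which case $\SG(\calO)_B$ is the subgroup attached to the subfield $K$ under the class-field-theoretic description of $\SG(\calO)$.

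The conclusion follows by counting. If $D$ is ramified at some finite place, the selective sum is empty, so $t_\sg(\calO')\equiv a$ is constant and \eqref{eq:typ-refine} gives $t(\calO)=\abs{\SG(\calO)}\,a$ with $a=t_\sg(\calO)\in\Z$; hence $\abs{\SG(\calO)}$ divides $t(\calO)$. For the general statement, the crucial point is that each selective $K=\Frac(B)$, being a CM field over the totally real $F$, is ramified at \emph{every} archimedean place of $F$; since $\Delta=\ker\pi$ is precisely the part of $\SG(\calO)$ that dies in the quotient corresponding to the subextensions of the spinor class field unramified at the archimedean places (this is the class-field-theoretic meaning of $\SG(\calO)\twoheadrightarrow\WSG(\calO)$, parallel to $\Cl^+(O_F)\twoheadrightarrow\Cl(O_F)$, cf.~\eqref{eqn:df-WSG}), it follows that $\Delta\not\subseteq\SG(\calO)_B$, equivalently $\SG(\calO)=\Delta\cdot\SG(\calO)_B$. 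Hence $\Delta$ meets each coset of $\SG(\calO)_B$ in exactly $\abs{\Delta}/2$ elements, so for every coset $\pi^{-1}(w)$ of $\Delta$ one has $\abs{\pi^{-1}(w)\cap d_B\,\SG(\calO)_B}=\abs{\Delta}/2$, independently of $w$. Summing the displayed formula over $[\calO']_\sg\in\pi^{-1}(w)$,
\[
  \abs{\Phi^{-1}(w)} = \abs{\Delta}\,a \;+\; \tfrac12\,\abs{\Delta}\sum_{B} c_B,
\]
which is visibly independent of $w$. Therefore $t(\calO)=\abs{\WSG(\calO)}\cdot\abs{\Phi^{-1}(w)}$ is divisible by $\abs{\WSG(\calO)}$.

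I expect the main obstacle to be the rigorous version of the second paragraph: proving the spinor type number formula itself (a spinor refinement of the Eichler--Vign\'eras--K\"orner trace computation), and — most delicately — extracting from the optimal spinor selectivity machinery the exact index-$2$ subgroup $\SG(\calO)_B$ attached to each selective order and verifying the transversality $\Delta\not\subseteq\SG(\calO)_B$ in the totally definite setting; this is the step where the input ``an embeddable $K\subset D$ is necessarily CM, hence ramified at every infinite place, hence not contained in the wide spinor class field'' is used. A secondary technical point is the equidistribution of right ideal classes among the spinor genera of a fixed genus, which is what makes the term $a$ independent of $[\calO']_\sg$.
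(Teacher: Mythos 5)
Your proposal is correct and follows essentially the same route as the paper's proof: reduce to showing that the fibers of $\Tp(\calO)\to\SG(\calO)\to\WSG(\calO)$ have equal cardinality, split the spinor type number formula into non-selective terms (constant across spinor genera) and selective terms, and for each selective $(B,\lambda)$ use the fact that the CM field $K=\Frac(B)$ cannot lie in the totally real field $\Sigma^{\wide}$ to conclude that $\ker(q)$ surjects onto $\Gal(K/F)$, so exactly half of each fiber of $q$ is selected. The paper phrases this last step as linear disjointness of $K/F$ from $\Sigma^{\wide}/F$ giving $\Gal(K\Sigma^{\wide}/F)\simeq\Gal(\Sigma^{\wide}/F)\times\Gal(K/F)$, which is the same transversality you establish group-theoretically.
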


The above result will be re-interpreted as the divisibility of the class number of certain totally positive ternary quadratic $O_F$-lattices in Theorem~\ref{thm:ternary-latt}.
Using the same method, we can also obtain  divisibility results for the class number of $\calO$, and more generally for the trace of the $\grn$-Brandt matrix $\grB(\calO, \grn)$ for any nonzero integral $O_F$-ideal $\grn$.  By definition, the class number $h\coloneqq h(\calO)$ is the cardinality of the finite set $\Cl(\calO)$ of locally principal fractional right $\calO$-ideal classes in $D$. It depends only on the genus of $\calO$. 
Given a nonzero integral $O_F$-ideal $\grn$, the $\grn$-Brandt matrix $\grB(\calO, \grn)\in \Mat_h(\Z)$ is an $h\times h$-matrix  with non-negative integer entries constructed from the ideal class set $\Cl(\calO)$ as in \cite[Definition~41.2.4]{voight-quat-book} or \cite[Exercise~5.8]{vigneras}, which will be recalled in Definition~\ref{defn:brandt-matrix}. From \cite[\S41.2.5]{voight-quat-book} or \cite[\S4]{xue-yang-yu:ECNF}, $\grB(\calO, \grn)$ can be interpreted as the matrix of the $\grn$-Hecke operator on a space of algebraic modular forms attached to $\calO$, and its trace can be computed by the renowned Eichler trace formula \cite[Proposition~V.2.4]{vigneras}\cite[Theorem~41.5.2]{voight-quat-book}. Moreover, if $\grn=O_F$, then $\Tr(\grB(\calO, O_F))$ coincides with the class number $h(\calO)$.

\begin{thm}[{Theorem~\ref{thm:div-tr-Brandt}}]\label{thm:class-num-div}
  Let $\calO$ be a residually unramified $O_F$-order in a totally definite quaternion $F$-algebra $D$. Then for every nonzero integral ideal $\grn\subseteq O_F$, $\Tr(\grB(\calO,\grn))$ is divisible by the class number $h(F)$. If $D$ is further assumed to be ramified at some finite place of $F$, then $\Tr(\grB(\calO,\grn)))$ is divisible by the narrow class number $h^+(F)$.  In particular, the above divisibility results hold for the class number $h(\calO)$ as $h(\calO)=\Tr(\grB(\calO, O_F))$.
\end{thm}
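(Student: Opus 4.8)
The plan is to run the proof of Theorem~\ref{thm:type-num-div} in a form weighted by the diagonal Brandt coefficients. First I would write $\Tr(\grB(\calO,\grn))=\sum_{[I]\in\Cl(\calO)}b_{[I]}(\grn)$, where by Definition~\ref{defn:brandt-matrix} the diagonal entry $b_{[I]}(\grn)=\#\{\mu\in O_\ell(I)\mid\Nr(\mu)O_F=\grn\}/\#O_\ell(I)^\times$ depends only on the left order $O_\ell(I)$ of $I$. The reduced norm $I\mapsto\Nr(I)$ induces surjections $\nu\colon\Cl(\calO)\twoheadrightarrow\Cl(O_F)$ and $\nu^+\colon\Cl(\calO)\twoheadrightarrow\Cl^+(O_F)$ (onto because $\Nr\colon\whD^\times\to\whF^\times$ is onto). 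For $\bar\grc\in\Cl(O_F)$ set $T(\bar\grc,\grn)=\sum_{[I]\in\nu^{-1}(\bar\grc)}b_{[I]}(\grn)$, and define $T^+(\bar\grp,\grn)$ analogously for $\bar\grp\in\Cl^+(O_F)$; then $\Tr(\grB(\calO,\grn))=\sum_{\bar\grc}T(\bar\grc,\grn)=\sum_{\bar\grp}T^+(\bar\grp,\grn)$, and the whole problem reduces to showing that $T(\cdot,\grn)$ is constant on $\Cl(O_F)$ — which gives divisibility by $h(F)$ — and that $T^+(\cdot,\grn)$ is constant on $\Cl^+(O_F)$ when $D$ is ramified at a finite place — which gives divisibility by $h^+(F)$.

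Next I would identify the fibers of $\nu^+$ spinorially. Composing the left-order map $[I]\mapsto[O_\ell(I)]$ with the reduced norm, one checks (using right multiplication by elements of the normalizer of $\wcO$, which fixes left orders) that $\nu^{+,-1}(\bar\grp)$ consists exactly of the classes whose left order lies in the spinor genus $s(\bar\grp)\in\SG(\calO)$ — the image of $\bar\grp$ under the canonical surjection $\Cl^+(O_F)\twoheadrightarrow\SG(\calO)$ coming from \eqref{eq:118} — and that the multiset of left orders appearing in $\nu^{+,-1}(\bar\grp)$ depends only on $s(\bar\grp)$; hence $T^+(\bar\grp,\grn)$ depends only on $s(\bar\grp)$, say $T^+(\bar\grp,\grn)=\mathcal T_{s(\bar\grp)}(\grn)$. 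At the wide level, $\Cl(O_F)=\Cl^+(O_F)/V$ surjects onto $\WSG(\calO)=\SG(\calO)/V_\sg$, where $V=\ker(\Cl^+(O_F)\to\Cl(O_F))$ and $V_\sg$ is its image (cf.~\eqref{eqn:df-WSG}), and $T(\bar\grc,\grn)$ is a fixed positive multiple of $\sum_{s\in W}\mathcal T_s(\grn)$ with $W$ the $V_\sg$-coset of $\SG(\calO)$ lying over $\bar\grc$. So everything comes down to the behaviour of the spinor partial traces $\mathcal T_s(\grn)$.

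The heart of the argument is a selectivity-corrected Eichler trace formula for $\mathcal T_s(\grn)$. Expanding $\mathcal T_s(\grn)$ over the $O_F$-orders $\mathfrak o$ in CM extensions $K/F$ with $\Frac(\mathfrak o)=K$, the scalar/identity contribution (a multiple of $h(\calO)$, nonzero only when $\grn$ is the square of a principal $O_F$-ideal) is the same for all $s$, while each CM contribution is a product of local optimal-embedding numbers weighted by $h(\mathfrak o)$ and by a factor $\varepsilon_s(\mathfrak o)$ recording how optimal embeddings of $\mathfrak o$ distribute over $\SG(\calO)$. By the optimal spinor selectivity theory of Chinburg--Friedman \cite{Chinburg-Friedman-1999} and its refinements (\S\ref{sec:spinor-trace-formula}, \cite[\S5]{peng-xue:select}), which applies because $\calO$ is residually unramified (Definition~\ref{defn:res-unr-order}), the set of spinor genera selected by any such $\mathfrak o$ is either all of $\SG(\calO)$ or a coset of an index-$2$ subgroup $C_0(\mathfrak o)$ attached to the selective field of $\mathfrak o$; that field, being CM, is ramified at every infinite place, so $V_\sg\not\subseteq C_0(\mathfrak o)$, whence $\sum_{s\in W}\varepsilon_s(\mathfrak o)$ is independent of the $V_\sg$-coset $W$. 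Combining, $\sum_{s\in W}\mathcal T_s(\grn)$ is independent of $W$, so $T(\cdot,\grn)$ is constant on $\Cl(O_F)$ and $h(F)\mid\Tr(\grB(\calO,\grn))$. When $D$ is ramified at a finite place, the mechanism behind the second assertion of Theorem~\ref{thm:type-num-div} shows that $\mathcal T_s(\grn)$ is in fact independent of $s\in\SG(\calO)$ itself, so $T^+(\cdot,\grn)$ is constant on $\Cl^+(O_F)$ and $h^+(F)\mid\Tr(\grB(\calO,\grn))$. Taking $\grn=O_F$ and using $h(\calO)=\Tr(\grB(\calO,O_F))$ gives the last statement.

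The step I expect to be the main obstacle is this selectivity-corrected trace formula for a single spinor genus: making precise the spinorial description of the reduced-norm fibers of $\Cl(\calO)$, isolating the factor $\varepsilon_s(\mathfrak o)$, and checking that residual unramifiedness places the exceptional index-$2$ selective cosets exactly where the $V_\sg$-coset sums stay constant — and, in the ramified case, that they cannot occur at all (as the proof of Theorem~\ref{thm:type-num-div} will have established). As an independent check on the $h(F)$-divisibility, one may note that the CM contributions are visibly divisible by $h(F)$: for any $O_F$-order $\mathfrak o$ in a CM extension $K/F$, the ring class group $\Pic(\mathfrak o)$ surjects onto $\Cl(O_K)$, which surjects onto $\Cl(O_F)$ because $K/F$, being CM, is ramified at the infinite places; so $h(F)\mid h(\mathfrak o)$, and only the scalar term $h(\calO)$ then remains to be handled by the fibration argument. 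This shortcut does not reach $h^+(F)$, however, so the ramified refinement genuinely requires the full fibration/selectivity input.
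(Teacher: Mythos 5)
Your proposal is correct and follows essentially the same route as the paper: the paper decomposes $\Tr(\grB(\calO,\grn))$ over the spinor classes $\SCl(\calO)\cong\Cl^+(O_F)$, applies the spinor Eichler trace formula (quoted from prior work, with the selectivity factor $2^{s(B,\calO)}\Delta(B,\calO)$) to each diagonal block, and shows that the sum over each fiber of $\Cl^+(O_F)\to\Cl(O_F)$ is constant via precisely your linear-disjointness mechanism (the CM field $K$ is linearly disjoint from the totally real Hilbert class field, so the selected spinor genera meet each relevant coset in exactly half its elements), with the ramified case handled by the absence of selectivity. Two harmless slips to note: the reduced-norm fiber over $\bar\grp\in\Cl^+(O_F)$ is a single spinor class, which in general only refines the partition of $\Cl(\calO)$ by the spinor genus of the left order (the statement you actually use --- that the block trace depends only on that spinor genus --- remains true), and the ``independent check'' via $h(F)\mid h(\mathfrak{o})$ is not rigorous termwise because of the denominators $4w(\mathfrak{o})$ in the trace formula, though you correctly do not rely on it.
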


It is remarkable how closely the divisibility results of the class number $h(\calO)$ in Theorem~\ref{thm:class-num-div} resembles that of CM-extensions \cite[Theorem~4.10]{Washington-cyclotomic}, that is, for every CM-extension $K/F$, the class number $h(K)$ is always divisible by $h(F)$, and furthermore, if $K/F$ is ramified at some finite place of $F$, then $h(K)$ is divisible by $h^+(F)$. Actually, the proofs of Theorems~\ref{thm:type-num-div}--\ref{thm:class-num-div} and that of \cite[Theorem~4.10]{Washington-cyclotomic} all rely crucially on the linear disjointness between the Hilbert class field of the totally real field $F$ and any CM-extension of $F$.   On the other hand, Theorem~\ref{thm:class-num-div} can be viewed as an improvement of the divisibility result of Vign\'eras \cite[Corollaire~1.1]{vigneras:ens}, who shows that  $2h(\calO)$ is always divisible by $h(F)$ for any Eichler order $\calO$ by studying the relationship between the class number $h(\calO)$ and the type number $t(\calO)$. The divisibility result of Vign\'eras has since been generalized to all quaternion $O_F$-orders by Chia-Fu Yu and the second named author in \cite[Corollary~3.10]{xue-yu:type_no}, and a partial case of Theorem~\ref{thm:class-num-div} was also treated in \cite[Theorem~5.4]{xue-yu:type_no} by a further elaboration of Vign\'eras's method.

Unsurprisingly, Vign\'eras's class-type number relationship is also the starting point of our current endeavor for the spinor type number formula.
Our formula is also closely modeled after K\"orner's type number formula \cite{korner:1987}, although we will not follow the same route of derivation via the Selberg trace formula.  To  explain our formula, it is better to recall  K\"orner's type number formula, but to recast its derivation in our way and then explain the extra input needed to get the spinor type number formula.

For each locally principal fractional right $\calO$-ideal $I\subset D$, we write $[I]$ for its ideal class in $\Cl(\calO)$, and $\calO_l(I)\coloneqq\{x\in D \mid xI\subseteq I\}$ for its left order in $D$.   
There is a canonical surjective map 
\begin{equation}\label{eq:Cl-to-Tp}
   \xi:  \Cl(\calO) \to \Tp(\calO), \qquad [I] \mapsto [\calO_l(I)],  
\end{equation}
which can also be interpreted as the quotient map of the group action of $\Picent(\calO)$ on $\Cl(\calO)$ as follows.  By definition \cite[Definition~55.6]{curtis-reiner:2},  the \emph{central Picard group} $\Picent(\calO)$ is the quotient of the group of invertible (equivalently, locally principal by \cite{Kaplansky-quat-invertible}) two-sided fractional $\calO$-ideals in $D$ modulo the subgroup of principal ideals generated by elements of $F^\times$, and it is a finite group by \cite[Proposition 18.4.10]{voight-quat-book}.  Given an invertible two-sided fractional $\calO$-ideal $P\subset D$,  we still write $[P]$ for its class in $\Picent(\calO)$ by an abuse of notation.   The group $\Picent(\calO)$ acts  on $\Cl(\calO)$ by right multiplication: 
\begin{equation}\label{eq:Picent-action-defn}
    \mu:    \Cl(\calO) \times \Picent(\calO) \to \Cl(\calO), \qquad ([I],[P]) \mapsto [IP],
\end{equation}
and two locally principal right $\calO$-ideals  $I, I'$ share isomorphic left orders if and only if their classes in $\Cl(\calO)$ belong to the same orbit.   Thus there is a canonical bijection
\begin{equation}
    \Tp(\calO)\simeq \Cl(\calO)/\Picent(\calO),
\end{equation}
and the type number $t(\calO)$ can be computed via Burnside's lemma \cite[Theorem 2.113]{Rotman-alg}. 
Indeed, Burnside's lemma states that if $G$ is a finite group acting on a finite set $X$, then the number  of orbits $\abs{G\backslash X}$ is given by $\frac{1}{\abs{G}}\sum_{g\in G}\abs{X^g}$, where $X^g$ denotes the subset of fixed points of $g\in G$.  It turns out that a similar strategy can be used to compute the spinor type number $t_\sg(\calO)$.  This strategy not only helps us circumvent the more technical Selberg trace formula employed by Pizer \cite{Pizer1973} and K\"orner \cite{korner:1987} in their efforts for the type number formula, but is also more adaptable to the spinor type number situation.   The details will be carried out in Section~\ref{sec:Picent-for-tot-def-quat-alg}.

Certainly Burnside's lemma is not the only tool needed for the (spinor) type number formula. 
Classically in the proof of the Eichler trace formula \cite[Theorem~41.5.2]{voight-quat-book}, a key ingredient is the \emph{trace formula for optimal embeddings} \cite[Theorem~30.4.7]{voight-quat-book}, which 
works for any  quaternion algebra without imposing the totally definite requirement.  Nevertheless, we keep the totally definite assumption since it is most relevant here. A quadratic extension of $F$ embeds into $D$ only if it is a CM-extension, and an $O_F$-order $B$ of full rank in a CM-extension  $K/F$ will be called a \emph{CM $O_F$-order}.
  We write $\Emb(B, \calO)$ for the set of optimal embeddings of $B$ into $\calO$, that is, 
\begin{equation}\label{eq:def-opt-emb}
   \Emb(B, \calO)\coloneqq \{\varphi\in \Hom_F(K, D)\mid \varphi(K)\cap \calO= \varphi(B)\}.  
\end{equation}
The unit group $\calO^\times$ acts on $\Emb(B,  \calO)$ from the right by sending $\varphi\mapsto u^{-1} \varphi u$ for each  $\varphi\in \Emb(B, \calO)$ and $u\in \calO^\times$, and the number of orbits is denoted by 
\begin{equation}\label{eq:no-conjcls-opt}
    m(B, \calO, \calO^\times)\coloneqq \abs{\Emb(B,  \calO)/\calO^\times}, 
\end{equation}
which is known to be finite. A similar finiteness result holds for the number $m(B_\grp, \calO_\grp, \calO_\grp^\times)$ of $\calO_\grp^\times$-conjugacy classes of optimal embeddings from $B_\grp$ into $\calO_\grp$ at each finite place $\grp$ of $F$, where $B_\grp$ (resp.~$\calO_\grp$) denotes the $\grp$-adic completion of $B$ (resp.~$\calO$). The trace formula relates the numbers of local and global  conjugacy classes of optimal embeddings as follows:
\begin{equation}\label{eq:classic-trf-opt-intro}
    \sum_{[I]\in \Cl(\calO)} m(B, \calO_l(I), \calO_l(I)^\times)=h(B)\prod_\grp m(B_\grp, \calO_\grp, \calO_\grp^\times), 
\end{equation}
where $h(B)\coloneqq \abs{\Pic(B)}$ denotes the class number of  $B$, 
and the product runs over all finite places of $F$.  The above trace formula is also used implicitly in Pizer's type number formula \cite[Theorem~A]{Pizer1973} for Eichler orders of square-free levels.

However, optimal embeddings alone are inadequate for the (spinor) type number formula for more general quaternion orders.  We compare the situation with that of the Eichler trace formula case or Pizer's type number formula case  to understand why. 
Implicitly, each CM $O_F$-order $B$ appearing in the Eichler trace formula carries a distinguished point $\lambda\in B\smallsetminus O_F$.   Indeed,  in \cite[Theorem~41.5.2]{voight-quat-book}, $\lambda$ is given by the image of $x$ in the quotient ring $R[x]/(x^2-tx+un)$ as denoted there, which has a prescribed norm (cf.~\eqref{eq:def-TBn}). Similar points are also present in the relevant CM $O_F$-orders in Pizer's type number formula \cite[Theorem~A]{Pizer1973}. 
For these two formulas, optimal embeddings \emph{automatically} send such points to the desired elements of $\calO$. In the setting of Eichler trace formula, an optimal embedding preserves the norm of the distinguished point;  and in Pizer's setting which focuses on Eichler orders of square-free levels, every optimal embedding $\varphi\in \Emb(B, \calO)$
 sends the distinguished point $\lambda$ into the normalizer group $\calN(\calO)\subset D^\times$ of $\calO$ by \cite[Lemma~5]{Pizer1973}.  
Comparably, if the square-free condition is dropped, the latter statement would not be valid in general, which causes difficulty in the case of more general quaternion orders. 
The insight of K\"orner to overcome this difficulty is to replace optimal embeddings by restricted optimal embeddings, which is indispensable for his success in extending the type number formula from Eichler orders of square-free levels to arbitrary  quaternion $O_F$-orders. 
By definition, a \emph{restricted optimal embedding} from a pointed CM $O_F$-order $(B, \lambda)$ to $\calO$ is an optimal embedding $\varphi\in \Emb(B, \calO)$ that sends $\lambda$ into $\calN(\calO)$. We denote the set of restricted optimal embeddings by $\REmb((B, \lambda), \calO)$, that is, 
\begin{equation}\label{eq:global-REmb-intro}
    \REmb((B, \lambda), \calO)\coloneqq \{\varphi\in \Emb(B, \calO)\mid \varphi(\lambda)\in \calN(\calO)\},
\end{equation}
which is again $\calO^\times$-conjugate stable. 
Mimicking  \eqref{eq:no-conjcls-opt}, we put
\begin{equation}
    n((B, \lambda), \calO, \calO^\times)\coloneqq \abs{\REmb((B, \lambda), \calO)/\calO^\times}. 
\end{equation}
It turns out that a similar trace formula as \eqref{eq:classic-trf-opt-intro} holds for restricted optimal embeddings (see Proposition~\ref{prop:trf-res-opt-emb}): 
     \begin{equation}\label{eq:110}
        \sum_{[I]\in\Cl(\calO)} n((B,\lambda),\calO_l(I),\calO_l(I)^\times)=h(B)\prod_\grp n((B_\grp,\lambda),\calO_\grp,\calO_\grp^\times). 
    \end{equation}
This formula was used implicitly by K\"orner in \cite{korner:1987}, and  we recall his type number formula below.  For simplicity, we put $n_\grp(B, \lambda)\coloneqq n((B_\grp,\lambda),\calO_\grp,\calO_\grp^\times)$ since it  depends only on the genus $\scrG(\calO)$, which is always clear from the context. 

\begin{thm}[{\cite[Theorem 3]{korner:1987}}] 
  The type number $t(\calO)$ of any  $O_F$-order $\calO$ in a totally definite quaternion $F$-algebra $D$  can be computed as follows:
    \begin{equation}\label{eq:Korner-formula-intro}
        t(\calO)=\frac{1}{\abs{\Picent(\calO)}} \Big( \Mass(\calO)+\frac{1}{2} \sum_{(B,\lambda)\in\scrB} \frac{h(B)}{w(B)}\prod_{\grp\mid \grd(\calO)} n_\grp(B,\lambda)\Big), 
    \end{equation}
 where 
 \begin{itemize}
     \item  $\Mass(\calO)$ denotes the mass of $\calO$ as defined in \eqref{eq:mass-def} whose valued can be computed by the Eichler mass formula \eqref{eq:234},
     \item $\scrB$ denotes the finite set of pointed CM $O_F$-orders $(B, \lambda)$ constructed in Proposition~\ref{prop:def-B},
     \item  $h(B)$ (resp.~$w(B)$) denotes the class number (resp.~the unit index $[B^\times: O_F^\times]$) of the  CM $O_F$-order $B$, and the value of  
     $h(B)$ can be calculated by the Dedekind’s formula in \cite[\S III.5, p.~95]{vigneras} or \cite[p.~74]{vigneras:ens},
     \item $\grd(\calO)$ denotes the reduced discriminant of $\calO$, and we have $n_\grp(B, \lambda)=1$ for all $(B, \lambda)\in \scrB$ if $\grp\nmid \grd(\calO)$ by Lemma~\ref{lem:n1-pnmid-d}.
 \end{itemize}
\end{thm}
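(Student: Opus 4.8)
The plan is to re-derive K\"orner's formula via the Burnside-type argument sketched above, feeding in the trace formula for restricted optimal embeddings \eqref{eq:110} in place of the Selberg trace formula used in \cite{Pizer1973,korner:1987}.

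First I would make the Burnside count explicit. Combining the bijection $\Tp(\calO)\cong\Cl(\calO)/\Picent(\calO)$ coming from the action $\mu$ of \eqref{eq:Picent-action-defn} with Burnside's lemma and re-summing over $\Cl(\calO)$ by orbit--stabilizer gives $t(\calO)=\abs{\Picent(\calO)}^{-1}\sum_{[I]\in\Cl(\calO)}\abs{\Stab_{\Picent(\calO)}([I])}$. A class $[P]$ fixes $[I]$ exactly when its $I$-transport to a two-sided ideal of $\calO_l(I)$ has the form $\calO_l(I)\,y$ for some $y\in\calN(\calO_l(I))$, so the $I$-transport identifies $\Stab_{\Picent(\calO)}([I])$ with the image of the map
\[
\calN(\calO_l(I))/F^\times\calO_l(I)^\times\to\Picent(\calO_l(I))\cong\Picent(\calO),\qquad y\mapsto[\calO_l(I)\,y],
\]
which is injective because $D$ is a division algebra. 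Hence $\abs{\Stab_{\Picent(\calO)}([I])}=[\calN(\calO_l(I)):F^\times\calO_l(I)^\times]$, and
\[
t(\calO)=\frac{1}{\abs{\Picent(\calO)}}\sum_{[I]\in\Cl(\calO)}\bigl[\calN(\calO_l(I)):F^\times\calO_l(I)^\times\bigr].
\]
Everything now reduces to evaluating this weighted sum.

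Next I would split $[\calN(\calO_l(I)):F^\times\calO_l(I)^\times]=1+c(I)$, where $c(I)$ counts the nontrivial cosets. The summed $1$'s give $\sum_{[I]}1=h(\calO)=\Tr(\grB(\calO,O_F))$, which I would evaluate by the Eichler class number formula --- the $\grn=O_F$ case of the Eichler trace formula \cite[Proposition~V.2.4]{vigneras}\cite[Theorem~41.5.2]{voight-quat-book} --- writing $h(\calO)=\Mass(\calO)+\tfrac12\sum_{(B,\lambda)}\tfrac{h(B)}{w(B)}\prod_{\grp\mid\grd(\calO)}n_\grp(B,\lambda)$, where $\Mass(\calO)$ is the mass \eqref{eq:mass-def} (computed by \eqref{eq:234}) and the sum runs over the $(B,\lambda)\in\scrB$ whose distinguished element $\lambda$ is a root of unity; for such $\lambda$ the restricted and ordinary local embedding numbers agree, since $\lambda\in\calO_\grp^\times\subseteq\calN(\calO_\grp)$ automatically. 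For $\sum_{[I]}c(I)$: a nontrivial coset of $\calN(\calO_l(I))/F^\times\calO_l(I)^\times$ has a representative $x\in\calO_l(I)\cap\calN(\calO_l(I))$ with $x\notin F$, so $F(x)/F$ is a CM extension, $B_x\coloneqq F(x)\cap\calO_l(I)$ is a CM $O_F$-order with $x\in B_x$, and the condition $x\in\calN(\calO_l(I))$ is precisely the assertion that $B_x\hookrightarrow\calO_l(I)$ is a \emph{restricted} optimal embedding of the pointed order $(B_x,x)$. Running over all $[I]$, these data match the $\calO_l(I)^\times$-conjugacy classes of restricted optimal embeddings of the remaining pointed CM orders in $\scrB$; feeding this into the trace formula \eqref{eq:110} --- while keeping track of the $\Gal(\Frac(B)/F)$-pairing $\lambda\leftrightarrow\bar\lambda$ and the unit index $w(B)=[B^\times:O_F^\times]$, which produce the factors $\tfrac12$ and $w(B)^{-1}$ --- turns $\sum_{[I]}c(I)$ into $\tfrac12\sum_{(B,\lambda)}\tfrac{h(B)}{w(B)}\prod_\grp n_\grp(B,\lambda)$, which Lemma~\ref{lem:n1-pnmid-d} trims to $\prod_{\grp\mid\grd(\calO)}n_\grp(B,\lambda)$. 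Adding the two contributions and invoking the construction of $\scrB$ in Proposition~\ref{prop:def-B} assembles the right-hand side of \eqref{eq:Korner-formula-intro}.

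The hard part will be the matching in the previous paragraph: attaching the right distinguished point $\lambda$ to each normalizer coset, organizing the global data into the finite set $\scrB$ (Proposition~\ref{prop:def-B}), and pinning down the combinatorial factors $\tfrac12$ and $w(B)$. This is exactly where K\"orner's replacement of ordinary optimal embeddings by \emph{restricted} ones becomes unavoidable: for an order $\calO$ that is not Eichler of square-free level, an optimal embedding $\varphi\in\Emb(B,\calO)$ need not send a prescribed element of $B$ into $\calN(\calO)$ (contrast \cite[Lemma~5]{Pizer1973}), so the pointing of $B$ genuinely carries information and must be tracked compatibly with localization. Once this correspondence is set up, the remaining inputs --- Burnside's lemma, the trace formula \eqref{eq:110}, the mass formula \eqref{eq:234}, and Lemma~\ref{lem:n1-pnmid-d} --- amount to bookkeeping.
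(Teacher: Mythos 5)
Your proposal is correct in outline and follows the same architecture as the paper's proof: Burnside's lemma for the $\Picent(\calO)$-action on $\Cl(\calO)$, reduction of the counts to restricted optimal embeddings of pointed CM orders, and the trace formula \eqref{eq:110}. The differences are organizational. You use the stabilizer form of Burnside, identifying $\abs{\Stab_{\Picent(\calO)}([I])}$ with $[\calN(\calO_l(I)):F^\times\calO_l(I)^\times]$ (the Vign\'eras class--type relation), and then split off the trivial coset so that $\sum_{[I]}1=h(\calO)$ is evaluated by the Eichler class number formula; the paper instead sums over $[P]\in\Picent(\calO)$, counts fixed points via the sets $X_P(\calO_i)$ and their norm-normalized versions $Z_P(\calO_i)$, extracts $\Mass(\calO)$ from the central elements of $X_\calO(\calO_i)$ only, and treats all non-central elements --- including the units that make up the CM part of $h(\calO)$ --- uniformly through the pointed orders of $\scrB$. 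The two splittings are compatible precisely because the Eichler class number formula equals $\Mass(\calO)$ plus the contribution of the pointed orders in $\scrB_{(O_F,O_F)}$ (those with $\lambda$ a unit, where restricted and ordinary local embedding numbers agree, as you note). One caution on the bookkeeping you defer: the factor $\tfrac12$ in \eqref{eq:Korner-formula-intro} does not come from the pairing $\lambda\leftrightarrow\bar\lambda$ but from the central units $\pm1$ --- the paper's normalization gives $Z_P(\calO_i)/\{\pm1\}\cong X_P(\calO_i)/O_F^\times$, and that is the sole source of the $\tfrac12$; the conjugation pairing is instead what reconciles the $\tfrac14\abs{T_{B,\grn}}$ appearing in the Eichler trace formula with the $\tfrac12$ per isomorphism class of pointed order. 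Getting these normalizations right (the choice of $\scrU$ and of the generators $\beta_{(\gra,\wp)}$, and the disjointness statements of Proposition~\ref{prop:def-B}) is exactly the content of \S\ref{sect:rep-orbits-XPO} and Proposition~\ref{prop:def-B}, which you correctly flag as the substantive part of the argument.
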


The spinor type number formula for $t_\sg(\calO)=\abs{\Tp_\sg(\calO)}$ that we aim to produce is a natural refinement of K\"orner's formula, as it counts the number of types within the spinor genus of $\calO$. 
Necessarily, this forces us to study restricted optimal embeddings from pointed CM $O_F$-orders into quaternion orders within a fixed spinor genus.
The \emph{selective} phenomenon naturally shows up, and we are led to the reformulation of the \emph{optimal spinor selectivity theory} \cite[\S31]{voight-quat-book} \cite{Xue-Yu-Selec-2024} in terms of restricted optimal embeddings rather than optimal embeddings. 
    Fix a pointed CM $O_F$-order $(B, \lambda)$,  and let $\scrG(\calO)$ (resp.~$[\calO]_\sg$) be the genus (resp.~spinor genus) of $\calO$  as in Definition~\ref{defn:spinor-genus}.  From \eqref{eq:110},  there exists $\calO_0\in \scrG(\calO)$ with $\REmb((B,\lambda), \calO_0)\neq \emptyset$ if and only if $\REmb((B_\grp,\lambda), \calO_\grp)\neq \emptyset$ for every finite place $\grp$ of $F$ (Compare with the similar statement for optimal embeddings in \cite[Corollary~30.4.18]{voight-quat-book}). However, the statement would not remain valid in general if the genus $\scrG(\calO)$ is replaced by the spinor genus $[\calO]_\sg$.  Given a spinor genus $[\calO']_\sg\in \SG(\calO)$, 
to encode  whether there exists a restricted optimal embedding from $(B, \lambda)$ into some order $\calO''\in [\calO']_\sg$, we introduce the symbol 
\begin{equation*}
    \Delta^{\res}((B,\lambda), \calO')\coloneqq
    \begin{cases}
        1 & \text{if}\ \exists\ \calO''\in [\calO']_\sg\ \text{such that}\ \REmb((B,\lambda), \calO'')\not=\emptyset,\\
        0 & \text{otherwise}.
    \end{cases}
\end{equation*}
One naturally asks the following questions.
\begin{enumerate}[label=(\roman*)]
    \item Does the value of $\Delta^{\res}((B,\lambda), \calO')$ vary  as $[\calO']_\sg$ ranges within $\SG(\calO)$?
    \item If the value of $\Delta^{\res}((B,\lambda), \calO')$ does vary as $[\calO']_\sg$ changes, how to compute the value for each specific  $[\calO']_\sg\in \SG(\calO)$?
\end{enumerate}
If the answer to Question~(i) is affirmative, then we say that $\scrG(\calO)$ is \emph{optimally spinor selective in the restricted sense} (selective for short) for the pointed order $(B, \lambda)$; see Definition~\ref{defn:oss}. Luckily, there is no need to redo every step of the \emph{optimal spinor selectivity theory} in \cite{M.Arenas-et.al-opt-embed-trees-JNT2018}\cite[\S31]{voight-quat-book} or \cite{Xue-Yu-Selec-2024} for restricted optimal embeddings.  Suppose that $\calO$ is a residually unramified order with $\REmb((B_\grp,\lambda), \calO_\grp)\neq \emptyset$ for every finite place $\grp$ of $F$.  It is shown in Proposition~\ref{prop:ru-equal-sel-symb} that $\Delta^{\res}((B,\lambda), \calO')=\Delta(B, \calO')$ for every $[\calO']_\sg\in \SG(\calO)$, where $\Delta(B, \calO')$ is a similar symbol  defined in terms of optimal embeddings with the distinguished point $\lambda$ ignored (see \eqref{eq:defn-Delta}). Thus the seemingly newer version of  selectivity theory essentially reduces to the old one studied in \cite{Xue-Yu-Selec-2024}. Keeping the residually unramified assumption on $\calO$, we put $s(B, \calO)\coloneqq 1$ if $\scrG(\calO)$ is selective for $(B, \lambda)$ and $0$ otherwise (cf.~\eqref{eq:defn-sBO} and Theorem~\ref{thm:spinor-selectivity}). In Proposition~\ref{prop:spinor-trf}, we develop a refinement of the trace formula in \eqref{eq:110} called \emph{the spinor trace formula for restricted optimal embeddings}:
   \begin{equation}\label{eq:112}
        \sum_{[I]\in\Cl_\scc(\calO)} n((B,\lambda),\calO_l(I),\calO_l(I)^\times)=\frac{2^{s(B,\calO)}\Delta(B,\calO)h(B)}{h^+(F)}\prod_\grp n_\grp(B,\lambda),
    \end{equation}
where $\Cl_\scc(\calO)$ is a subset of $\Cl(\calO)$ in Definition~\ref{defn:spinor-class} 
consisting of all ideal classes $[I]$ belonging to the same spinor class as the trivial class $[\calO]$.  Observe the similarity between \eqref{eq:112} and its optimal embedding counterpart in \cite[Proposition~4.3]{Xue-Yu-Selec-2024}. This formula plays the same role for our spinor type number formula as the ones played by the classical trace formula \eqref{eq:classic-trf-opt-intro}  for the Eichler trace formula and Pizer's type number formula.   This also explains why we need the residually unramified assumption for the spinor type number formula.

\begin{thm}[{Theorem~\ref{thm:spinor-type-number-formula}}]\label{thm:spinor-type-number-formula-intro}
    Let $\calO$ be a residually unramified $O_F$-order in a totally definite quaternion $F$-algebra $D$, and $\omega(\calO)$ be the number of distinct prime divisors of the reduced discriminant $\grd(\calO)$.  Put $C(\calO)\coloneqq 2^{\omega(\calO)}h(F)\abs{\SG(\calO)}$ for brevity. 
    Then
    \begin{equation}
        t_\sg(\calO)=\frac{1}{C(\calO)} \Big( \Mass(\calO)+\frac{1}{2} \sum_{(B,\lambda)\in\scrB} \frac{2^{s(B,\calO)}\Delta(B,\calO)h(B)}{w(B)}\prod_{\grp\mid \grd(\calO)} n_\grp(B,\lambda) \Big).
    \end{equation}
    If $D$ is further assumed to be ramified at some finite place of $F$, then
    \begin{equation}
        t_\sg(\calO)=\frac{1}{C(\calO)} \Big( \Mass(\calO)+\frac{1}{2} \sum_{(B,\lambda)\in\scrB} \frac{h(B)}{w(B)}\prod_{\grp\mid\grd(\calO)} n_\grp(B,\lambda) \Big).
    \end{equation}
\end{thm}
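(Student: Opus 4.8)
I would prove both identities by the Burnside-type argument used in Section~\ref{sec:Picent-for-tot-def-quat-alg} to recover K\"orner's formula \eqref{eq:Korner-formula-intro}, feeding in the spinor trace formula for restricted optimal embeddings \eqref{eq:112} where that argument uses the plain trace formula \eqref{eq:110}. First, under the identification $\Tp(\calO)\simeq\Picent(\calO)\backslash\Cl(\calO)$, the surjection $\Psi_\tp$ is induced by $\Cl(\calO)\to\SG(\calO)$, $[I]\mapsto[\calO_l(I)]_\sg$; multiplying $I$ by an invertible two-sided $\calO$-ideal leaves $\calO_l(I)$ unchanged, so this map is constant on $\Picent(\calO)$-orbits, its neutral fibre $\calF\subseteq\Cl(\calO)$ is $\Picent(\calO)$-stable, and $\Tp_\sg(\calO)\simeq\Picent(\calO)\backslash\calF$. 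By the description of spinor genera recalled through \eqref{eq:118} and Definition~\ref{defn:spinor-class}, $\calF$ is a disjoint union of spinor classes containing $\Cl_\scc(\calO)$; since the set of spinor classes is a group of order $h^+(F)$ of which $\SG(\calO)$ is a quotient, $\calF$ consists of exactly $h^+(F)/\abs{\SG(\calO)}$ spinor classes.

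Next, Burnside's lemma \cite[Theorem~2.113]{Rotman-alg} for the action of $\Picent(\calO)$ on $\calF$, combined with the bijection $aF^\times\calO_l(I)^\times\mapsto[I^{-1}aI]$ between $\calN(\calO_l(I))/(F^\times\calO_l(I)^\times)$ and $\{[P]\in\Picent(\calO):[IP]=[I]\}$ (re-indexing the double sum over $[I]$ instead of over $[P]$), gives
\[
  t_\sg(\calO)=\frac{1}{\abs{\Picent(\calO)}}\sum_{[I]\in\calF}\Bigl\lvert\frac{\calN(\calO_l(I))}{F^\times\calO_l(I)^\times}\Bigr\rvert .
\]
Using $1\to\calO_l(I)^\times/O_F^\times\to\calN(\calO_l(I))/F^\times\to\calN(\calO_l(I))/(F^\times\calO_l(I)^\times)\to1$, each summand equals $\abs{\calN(\calO_l(I))/F^\times}/w(\calO_l(I))$; as in K\"orner's bookkeeping, the identity class contributes $1$ and the remaining classes $aF^\times$ are enumerated, via the construction of $\scrB$ in Proposition~\ref{prop:def-B}, by restricted optimal embeddings, with the factor $\tfrac12$ of \eqref{eq:Korner-formula-intro} and the weight $1/w(B)$ (orbit–stabiliser, the $\calO_l(I)^\times$-stabiliser of $\varphi$ being $\varphi(B^\times)$). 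Hence
\[
  \sum_{[I]\in\calF}\Bigl\lvert\frac{\calN(\calO_l(I))}{F^\times\calO_l(I)^\times}\Bigr\rvert=\sum_{[I]\in\calF}\frac{1}{w(\calO_l(I))}+\frac12\sum_{(B,\lambda)\in\scrB}\frac{1}{w(B)}\sum_{[I]\in\calF}n\bigl((B,\lambda),\calO_l(I),\calO_l(I)^\times\bigr).
\]

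Now right multiplication by elements of $\calN(\wcO)$ fixes left orders and permutes the spinor classes in $\calF$ transitively, so each sum over $\calF$ equals $h^+(F)/\abs{\SG(\calO)}$ times the corresponding sum over the single spinor class $\Cl_\scc(\calO)$. For the first sum I would combine this with the equidistribution of mass over spinor genera (the mass being given by the Eichler mass formula \eqref{eq:234}) to get $\sum_{[I]\in\calF}1/w(\calO_l(I))=\Mass(\calO)/\abs{\SG(\calO)}$; for the second I would insert the spinor trace formula \eqref{eq:112} together with $n_\grp(B,\lambda)=1$ for $\grp\nmid\grd(\calO)$ (Lemma~\ref{lem:n1-pnmid-d}) to get $\sum_{[I]\in\calF}n((B,\lambda),\calO_l(I),\calO_l(I)^\times)=\bigl(2^{s(B,\calO)}\Delta(B,\calO)h(B)/\abs{\SG(\calO)}\bigr)\prod_{\grp\mid\grd(\calO)}n_\grp(B,\lambda)$. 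Plugging these back in and using $\abs{\Picent(\calO)}=2^{\omega(\calO)}h(F)$ for residually unramified $\calO$ (Section~\ref{sec:Picent-for-tot-def-quat-alg}) yields the first displayed identity with $C(\calO)=2^{\omega(\calO)}h(F)\abs{\SG(\calO)}$. For the second identity: when $D$ is ramified at a finite place, optimal spinor selectivity cannot occur (Theorem~\ref{thm:spinor-selectivity}), so $s(B,\calO)=0$ and $\Delta(B,\calO)=1$ for every $(B,\lambda)\in\scrB$, whence $2^{s(B,\calO)}\Delta(B,\calO)=1$.

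Since the substantial inputs — the spinor trace formula \eqref{eq:112}, the construction of $\scrB$, and the facts $\Tp(\calO)\simeq\Picent(\calO)\backslash\Cl(\calO)$ and $\abs{\Picent(\calO)}=2^{\omega(\calO)}h(F)$ — are all established earlier, the theorem is essentially an assembly. I expect the main obstacle to be the constant-chasing: arranging that the factor $h^+(F)/\abs{\SG(\calO)}$ coming from $\Cl_\scc(\calO)\subseteq\calF$, the $1/h^+(F)$ already in \eqref{eq:112}, the value $2^{\omega(\calO)}h(F)$ of $\abs{\Picent(\calO)}$, and the spinor-genus mass $\Mass(\calO)/\abs{\SG(\calO)}$ combine so that $\Mass(\calO)$ appears with coefficient exactly $1$; a secondary check is that the $\tfrac12$ and $1/w(B)$ weights in the enumeration of $\calN(\calO_l(I))/(F^\times\calO_l(I)^\times)$ by restricted optimal embeddings — the core of K\"orner's bookkeeping — survive the restriction to $\calF$ unchanged.
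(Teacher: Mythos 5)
Your proposal is correct and follows essentially the same route as the paper: Burnside's lemma for the $\Picent(\calO)$-action on the neutral fibre $\Cl_\sg(\calO)$, K\"orner's bookkeeping via $\scrB$ and restricted optimal embeddings to reach the preliminary formula of Proposition~\ref{prop:spinor-typ-prelim}, then the decomposition of $\Cl_\sg(\calO)$ into $h^+(F)/\abs{\SG(\calO)}$ spinor classes combined with the equidistribution of mass and the spinor trace formula \eqref{eq:112}, and finally $\abs{\Picent(\calO)}=2^{\omega(\calO)}h(F)$. The only cosmetic imprecision is in part (ii): when Assumption~\ref{assm:exist-local-everywhere} fails one has $\Delta(B,\calO)=0$ rather than $1$, but then $\prod_\grp n_\grp(B,\lambda)=0$ so the term vanishes on both sides, exactly as the paper notes after Proposition~\ref{prop:spinor-trf}.
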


This paper is organized as follows. In Section~\ref{sec:prelim-on-Picent}, we recall the basic properties of $\Picent(\calO)$ and fix a complete set of representatives of it. We provide a simplified proof of K\"orner's type number formula in Section~\ref{sec:Picent-for-tot-def-quat-alg} and then modify the process suitably to obtain a preliminary version of the spinor type number formula.  Both the trace formula \eqref{eq:110} and the spinor trace formula \eqref{eq:112} for restricted optimal embeddings will be proved in Section~\ref{sec:spinor-trace-formula}, where we reduce the spinor selectivity theory for restricted optimal embeddings to  that of optimal embeddings.  We complete the derivation of the spinor type number formula in Section~\ref{sec:spinor-type-number-formula} and prove the aforementioned divisibility results as its applications.  In Section~\ref{sec:ternary-latt} we re-interpret the spinor type number as the class number of certain ternary quadratic lattices within a fixed spinor genus.

\section{Preliminaries on the central Picard group}\label{sec:prelim-on-Picent}

As explained in the introduction, our derivation of the (spinor) type number formula for a totally definite quaternion order $\calO$  depends  on the $\Picent(\calO)$-action on the ideal class set $\Cl(\calO)$. In this section, we recall some basic properties of the central Picard group $\Picent(\calO)$ and fix a complete set of locally principal  two-sided 
integral ideals representing $\Picent(\calO)$. There is no need to specialize to totally definite quaternion algebras yet, so throughout this section we assume that $D$ is an arbitrary quaternion algebra over a number field $F$, and $\calO$ is an $O_F$-order (of full rank) in $D$.  

 From \cite[Definition~55.6]{curtis-reiner:2},  the \emph{central Picard group} $\Picent(\calO)$ is the quotient of the group $\Idl(\calO)$ of invertible  two-sided fractional $\calO$-ideals in $D$ modulo the subgroup of principal ideals generated by elements of $F^\times$.  Similarly, we define the local central Picard group $\Picent(\calO_\grp)$ at each finite place $\grp$ of $F$, where $\calO_\grp$ denotes the $\grp$-adic completion of $\calO$. 
 From \cite[Theorem~2]{Kaplansky-quat-invertible}, any invertible two-sided fractional $\calO_\grp$-ideal is principal,   hence generated by an element of the normalizer group $\calN(\calO_\grp)\subset D_\grp^\times$ of $\calO_\grp$. Thus there is a canonical isomorphism 
 \begin{equation}\label{eqn:loc-desc-picent}
     \calN(\calO_\grp)/F_\grp^\times\calO_\grp^\times\xrightarrow{\simeq} \Picent(\calO_\grp),\qquad g_\grp F_\grp^\times\calO_\grp^\times \mapsto [g_\grp\calO_\grp].
 \end{equation}
 If $\grp$ is coprime to the reduced discriminant $\grd(\calO)$ of $\calO$, then $\calO_\grp\simeq \Mat_2(O_{F_\grp})$, and hence $\Picent(\calO_\grp)$ is trivial since $\calN(\calO_\grp)= F_\grp^\times\calO_\grp^\times$ in this case. From \cite[Theorem~6]{Frohlich},  we have a short exact sequence 
    \begin{equation}\label{eqn:picent-seq}
        1 \to \Cl(O_F) \to \Picent(\calO) \to \prod_{\grp \ddiv \grd(\calO)} \Picent(\calO_\grp) \to 1,
    \end{equation}
which reduces the computation of the order $|\Picent(\calO)|$ to that of the finitely many local ones computed by K\"orner \cite{korner:1985}. For example, if $\calO$ is a maximal order in $D$, then it is well known that\begin{equation}\label{eqn:card-picent}
  |\Picent(\calO)|=h(F)2^{\omega(\calO)}  
\end{equation}
  where $h(F)$ denotes the class number of $F$, and $\omega(\calO)$ denotes  the number of distinct prime divisors of  $\grd(\calO)$.  In fact, equality \eqref{eqn:card-picent} holds for a class of quaternion orders called \emph{residually unramified orders}, 
 which is a slight generalization of Eichler orders. To state the precise definition of such orders, we recall the concept of \emph{Eichler invariants} from \cite[Definition~1.8]{Brzezinski-1983} and \cite[Definition 24.3.2]{voight-quat-book}.

\begin{defn}\label{defn:res-unr-order}
(1) Fix a finite place $\grp$ of $F$.   Let  $\grk_\grp:= O_F/\grp$
be the finite residue field of $\grp$,  and $\grk_\grp'/\grk_\grp$ be the unique
quadratic field extension.  When $\calO_\grp\not\simeq
\Mat_2(O_{F_\grp})$, the quotient of $\calO_\grp$ by its Jacobson radical
$\grJ(\calO_\grp)$ falls into the following three cases: 
\[\calO_\grp/\grJ(\calO_\grp)\simeq \grk_\grp\times \grk_\grp, \qquad \grk_\grp,
\quad\text{or}\quad \grk_\grp', \]
and the \emph{Eichler invariant} $e_\grp(\calO)$ of $\calO$ at $\grp$ is defined to be
$1, 0, -1$ accordingly.  As a convention, if $\calO_\grp\simeq
\Mat_2(O_{F_\grp})$, then $e_\grp(\calO)$ is defined to be
$2$.

(2) We say that $\calO$ is \emph{residually unramified at $\grp$} if $e_\grp(\calO)\neq 0$. If $e_\grp(\calO)\neq 0$ for every finite place $\grp$ of $F$, then we simply say that $\calO$ is \emph{residually unramified}.
\end{defn}

For example, if $D$ is ramified at $\grp$ and $\calO_\grp$ is maximal, then
$e_\grp(\calO)=-1$. It is shown in
\cite[Proposition~2.1]{Brzezinski-1983} that 
$e_\grp(\calO)=1$ if and only if
$\calO_\grp$ is a non-maximal Eichler order (particularly,
$D$ is split at $\grp$).  As a result, if $\calO$ is an Eichler
order, then $e_\grp(\calO)\neq 0$ for every finite place $\grp$, which shows that all Eichler orders are residually unramified.

\begin{lem}\label{lem:res-unr-card-of-Picent}
    Suppose that $\calO$ is residually unramified. Then
    \begin{equation}
        \abs{\Picent(\calO)}=h(F)2^{\omega(\calO)}. 
    \end{equation}
\end{lem}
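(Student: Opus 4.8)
The plan is to apply Fröhlich's exact sequence \eqref{eqn:picent-seq} to reduce the global statement to a purely local one, and then to settle the local statement place by place. Concretely, \eqref{eqn:picent-seq} gives $\abs{\Picent(\calO)}=h(F)\prod_{\grp\mid\grd(\calO)}\abs{\Picent(\calO_\grp)}$, and since $\omega(\calO)$ is exactly the number of finite places $\grp$ dividing $\grd(\calO)$, it is enough to prove that $\abs{\Picent(\calO_\grp)}=2$ for each such $\grp$. For $\grp\mid\grd(\calO)$ we have $\calO_\grp\not\simeq\Mat_2(O_{F_\grp})$, so by Definition~\ref{defn:res-unr-order} the Eichler invariant $e_\grp(\calO)$ lies in $\{1,0,-1\}$, and the residual-unramifiedness hypothesis forces $e_\grp(\calO)\in\{1,-1\}$. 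Using the identification \eqref{eqn:loc-desc-picent}, the task becomes: show $[\calN(\calO_\grp):F_\grp^\times\calO_\grp^\times]=2$ whenever $e_\grp(\calO)=\pm1$.

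I would then split into the two cases. If $e_\grp(\calO)=1$, then $\calO_\grp$ is a non-maximal Eichler order of level $\grp^n$ ($n\ge1$) in the split algebra $D_\grp\simeq\Mat_2(F_\grp)$, say $\calO_\grp=\bigl(\begin{smallmatrix}O_{F_\grp}&O_{F_\grp}\\ \grp^n&O_{F_\grp}\end{smallmatrix}\bigr)$ after a change of basis; the Atkin--Lehner element $w_\grp=\bigl(\begin{smallmatrix}0&1\\ \varpi^n&0\end{smallmatrix}\bigr)$ (with $\varpi$ a uniformizer) normalizes $\calO_\grp$, satisfies $w_\grp^2\in F_\grp^\times$, but interchanges the two maximal orders containing $\calO_\grp$ and so does not lie in $F_\grp^\times\calO_\grp^\times$; the standard local description of the normalizer of an Eichler order (\cite[\S23]{voight-quat-book}) then shows $\calN(\calO_\grp)=F_\grp^\times\calO_\grp^\times\sqcup F_\grp^\times\calO_\grp^\times w_\grp$, giving index $2$. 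If $e_\grp(\calO)=-1$, I would invoke Brzezinski's classification \cite{Brzezinski-1983}: up to conjugacy $\calO_\grp=O_{K_\grp}+\grp^n\Lambda$, where $K_\grp/F_\grp$ is the unramified quadratic field extension, $\Lambda$ is the maximal order of $D_\grp$ (so $\Lambda\simeq\Mat_2(O_{F_\grp})$ in the split case and $\Lambda=O_{D_\grp}$ in the division case), and $n\ge1$. Since $O_{K_\grp}$ is a discrete valuation ring of class number one, the $O_{K_\grp}$-stable lattices in $D_\grp$ are all homothetic, so $\calO_\grp$ is contained in a \emph{unique} maximal order $\Lambda$; hence $\calN(\calO_\grp)\subseteq\calN(\Lambda)$, and a direct computation identifies $\calN(\calO_\grp)/F_\grp^\times\calO_\grp^\times$ as cyclic of order $2$, generated in the split case by the element of $\GL_2(O_{F_\grp})$ realizing $\Gal(K_\grp/F_\grp)$ by conjugation (it is not in $F_\grp^\times\calO_\grp^\times$ because modulo $\grp$ it becomes the Frobenius of $\grk_\grp'$, which is not a $\grk_\grp$-scalar multiple of multiplication by an element of $\grk_\grp'$), and in the division case by a uniformizer of $\Lambda$, which has odd reduced valuation. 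Alternatively, one can simply quote Körner's tables of local central Picard groups \cite{korner:1985}, which record $\abs{\Picent(\calO_\grp)}=2$ exactly when $e_\grp(\calO)\in\{1,-1\}$. Combining the two cases with the reduction step then yields the claimed equality \eqref{eqn:card-picent}.

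The step I expect to be the main obstacle is the case $e_\grp(\calO)=-1$: one needs the structural input that such a local order is contained in a unique maximal order of $D_\grp$, so that its normalizer is suitably constrained, together with the bookkeeping showing that the residual Galois/uniformizer symmetry is the only source of nontrivial classes in $\calN(\calO_\grp)/F_\grp^\times\calO_\grp^\times$. By contrast, the case $e_\grp(\calO)=1$ is routine, being the classical Atkin--Lehner computation, and the reduction step is an immediate consequence of the exact sequence \eqref{eqn:picent-seq}.
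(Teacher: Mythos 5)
Your proposal is correct and follows essentially the same route as the paper: reduce via \eqref{eqn:loc-desc-picent}--\eqref{eqn:picent-seq} to showing $[\calN(\calO_\grp):F_\grp^\times\calO_\grp^\times]=2$ for each $\grp\mid\grd(\calO)$, which the paper then settles by citing K\"orner's \cite[Satz~2]{korner:1985} and Brzezinski's \cite[Theorem~2.2]{Brzezinski-crelle-1990} — exactly the fallback you offer. One small caveat on your explicit sketch of the $e_\grp(\calO)=-1$ case: the assertion that all $O_{K_\grp}$-stable lattices in $D_\grp$ are homothetic is false as stated ($D_\grp$ is two-dimensional over $K_\grp$), though the intended conclusion — that an order of Eichler invariant $-1$ lies in a unique maximal order — is true and is part of what the cited results of Brzezinski provide.
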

\begin{proof}
    In light of \eqref{eqn:loc-desc-picent}--\eqref{eqn:picent-seq}, it suffices to prove that $[\calN(\calO_\grp): F_\grp^\times \calO_\grp^\times]=2$ for every prime divisor $\grp$ of the reduced discriminant $\grd(\calO)$. This group index has been computed by K\"orner in \cite[Satz~2]{korner:1985}, and it can also be read off from the description of $\calN(\calO_\grp)$ by Brzezinski in \cite[Theorem~2.2]{Brzezinski-crelle-1990}.
\end{proof}

In \cite{korner:1987},  K\"orner developed a systematic way of fixing a complete set of invertible  two-sided integral $\calO$-ideals representing $\Picent(\calO)$.  As we shall see in Proposition~\ref{prop:choice-rep-picent}, the choices of representatives are based on the notion of primitive ideals. 

\begin{defn}\label{defn:primitive-ideal} 
Let $A$ be a (finite dimensional) simple $F$-algebra, and $\Lambda$ be an $O_F$-order (of full rank) in $A$.  Locally at a finite place $\grp$ of $F$,  a fractional right $\Lambda_\grp$-ideal $L_\grp\subset A_\grp$ is called a 
\emph{primitive right $\Lambda_\grp$-ideal} if it is integral, principal, and not contained in $\grp\Lambda_\grp$.  Globally, a  fractional right $\Lambda$-ideal $L\subset A$ is called a \emph{primitive right $\Lambda$-ideal} if $L_\grp$ is  primitive  at every finite place $\grp$.
\end{defn}

For example, $\Lambda_\grp$ itself is a primitive two-sided $\Lambda_\grp$-ideal.  More generally, given a principal fractional right $\Lambda_\grp$-ideal $L_\grp$,  we write $i(L_\grp)$ for the smallest integer $i\in \Z$ such that  $\grp^iL_\grp\subseteq \Lambda_\grp$. Then $L_\grp$ factors as the product of $\grp^{-i(L_\grp)}$ with the primitive right $\Lambda_\grp$-ideal $L_\grp^\sharp\coloneqq \grp^{i(L_\grp)}L_\grp$. Moreover, such a factorization is  unique. A similar result holds globally as follows.  

\begin{lem}\label{lem:unique-fac-prim}
    Each locally principal fractional right $\Lambda$-ideal $L\subset A$ factors uniquely as the product of a fractional $O_F$-ideal $\gra(L)$ and a primitive right $\Lambda$-ideal $L^\sharp$. 
\end{lem}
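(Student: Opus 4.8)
The plan is to deduce this global statement from the unique local factorization recalled just above, via the standard local--global correspondence for $O_F$-lattices in $A$. First I would note that since $L$ is a full $O_F$-lattice in $A$ stable under right multiplication by $\Lambda$, there is a nonzero $c\in O_F$ with $c\Lambda\subseteq L\subseteq c^{-1}\Lambda$; hence $L_\grp=\Lambda_\grp$ for every $\grp$ not dividing $c$. Combined with the local principality of $L$, this shows that at every finite place $\grp$ the principal fractional right $\Lambda_\grp$-ideal $L_\grp$ factors uniquely as $\grp^{-i(L_\grp)}L_\grp^\sharp$ with $L_\grp^\sharp$ primitive, and that $i(L_\grp)=0$ for all but finitely many $\grp$.

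For existence I would set $\gra(L)\coloneqq\prod_\grp \grp^{-i(L_\grp)}$, which is a well-defined fractional $O_F$-ideal by the finiteness just observed, and put $L^\sharp\coloneqq\gra(L)^{-1}L$. Since $\gra(L)^{-1}$ lies in the center $F$, the lattice $L^\sharp$ is again a full right $\Lambda$-submodule of $A$, hence a fractional right $\Lambda$-ideal, and its localizations are $(L^\sharp)_\grp=\grp^{i(L_\grp)}L_\grp=L_\grp^\sharp$ at every $\grp$. In particular $L^\sharp$ is locally principal and is primitive in the sense of Definition~\ref{defn:primitive-ideal}, and $L=\gra(L)\,L^\sharp$ is the desired factorization.

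For uniqueness, suppose $L=\grb\,M$ with $\grb$ a fractional $O_F$-ideal and $M$ a primitive right $\Lambda$-ideal. Localizing gives $L_\grp=\grp^{v_\grp(\grb)}M_\grp$ where $M_\grp$ is integral, principal, and not contained in $\grp\Lambda_\grp$; by the uniqueness of the local factorization of the principal ideal $L_\grp$ recalled above, this forces $v_\grp(\grb)=-i(L_\grp)$ and $M_\grp=L_\grp^\sharp$ for every $\grp$. Hence $\grb=\gra(L)$, and since a fractional right $\Lambda$-ideal is determined by its family of localizations, $M=L^\sharp$.

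The only point that needs care --- everything else being formal --- is the passage between a fractional right $\Lambda$-ideal and its family of localizations: one must know that $\gra(L)^{-1}L$ is genuinely a (locally principal) fractional right $\Lambda$-ideal and that agreement of all localizations forces global equality. Both are instances of the local--global principle for $O_F$-lattices in $A$, and, given the finiteness of $\supp\gra(L)$ established in the first step, they require no further work.
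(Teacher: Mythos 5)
Your proof is correct and follows essentially the same route as the paper's: both define $\gra(L)=\prod_\grp\grp^{-i(L_\grp)}$, use the fact that $L_\grp=\Lambda_\grp$ for almost all $\grp$ to see this product is a well-defined fractional ideal, set $L^\sharp=\gra(L)^{-1}L$, and deduce uniqueness from the local case. You simply spell out the local--global bookkeeping that the paper leaves implicit.
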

\begin{proof}
    The uniqueness of the factorization $L=\gra(L)L^\sharp$ follows directly from the local case.  For the existence, we put $\gra(L)\coloneqq \prod_\grp \grp^{-i(L_\grp)}$, where the product runs over all finite places of $F$.  Since $L_\grp=\Lambda_\grp$ for all but finitely many $\grp$, we have $i(L_\grp)=0$ for  almost all $\grp$, which shows that $\gra(L)$ is a well-defined fractional $O_F$-ideal. Now $L^\sharp\coloneqq \gra(L)^{-1}L$ is a primitive right $\Lambda$-ideal as desired. 
\end{proof}

We return to the quaternion case. Let $\calO$ be an arbitrary $O_F$-order in $D$, and $\Idl(\calO)$ be the group of invertible two-sided fractional $\calO$-ideals.  Similarly, we define its local counterpart $\Idl(\calO_\grp)$ for each finite place $\grp$ of $F$. By the above discussion, the map sending each class $[P_\grp]\in\Picent(\calO_\grp)$ to the primitive factor $P_\grp^\sharp\in \Idl(\calO_\grp)$
 provides a well-defined set-theoretical section for the quotient map \[\Idl(\calO_\grp)\to \Picent(\calO_\grp)\simeq \calN(\calO_\grp)/F_\grp^\times\calO_\grp^\times.\]
The image $\scrP_\grp$ of this section $\Picent(\calO_\grp)\to \Idl(\calO_\grp)$ consists of all primitive two-sided $\calO_\grp$-ideals, and $\abs{\scrP_\grp}=[\calN(\calO_\grp):F_\grp^\times\calO_\grp^\times]$. Recall that if $\grp$ is coprime to the reduced discriminant $\grd(\calO)$, then $\Picent(\calO_\grp)$ is trivial, and $\scrP_\grp$ is a singleton consisting of only  $\calO_\grp$. 

Globally, let $\scrP$ be the set of all primitive two-sided $\calO$-ideals, which can also be realized as the image of a set-theoretical section for the composition of the canonical surjective homomorphisms 
\[\Idl(\calO)\to \Picent(\calO)\to \prod_{\grp\mid \grd(\calO)}\Picent(\calO_\grp)\]
as follows. 
Let $\grp_1, \cdots, \grp_{\omega(\calO)}$   be a complete list of the distinct prime divisors of $\grd(\calO)$.  Suppose that we are given a primitive two-sided $\calO_{\grp_i}$-ideal $P_{\grp_i}\in\scrP_{\grp_i}$ for each $1\leq i\leq \omega(\calO)$. From the local-global dictionary of lattices \cite[Proposition~4.21]{curtis-reiner:1}, there exists a unique  two-sided $\calO$-ideal $\calP$ such that 
\begin{equation}\label{eqn:loc-glob}
  \calP_{\grp_i}=P_{\grp_i}, \qquad \forall 1\leq i\leq \omega(\calO), \quad\text{and}\quad \calP_\grq=\calO_\grq, \qquad \forall \grq\nmid \grd(\calO).  
\end{equation}
Such an ideal $\calP$ is primitive by definition. 
Conversely, every primitive two-sided $\calO$-ideal arises in this way. In particular, we have
\begin{equation}\label{eqn:global-prim-card}
    \abs{\scrP}=\prod_{\grp\mid \grd(\calO)}\abs{\scrP_\grp}=\prod_{\grp \ddiv \grd(\calO)} [\calN(\calO_\grp):F_\grp^\times\calO_\grp^\times].
\end{equation}
Moreover, combining the above discussion with the short exact sequence \eqref{eqn:picent-seq}, we obtain the following proposition. 
\begin{prop}\label{prop:choice-rep-picent}
    A complete set of representatives of $\Picent(\calO)$ can be produced in the following steps:
    \begin{enumerate}
        \item Fix a complete set $\scrI=\{\gra_1, \cdots, \gra_{h(F)}\}$ of representatives of $\Cl(O_F)$, where each $\gra_i$ is an integral ideal of $O_F$, and $ h(F)\coloneqq\abs{\Cl(O_F)}$.  
        \item List all members of the set $\scrP$ of primitive two-sided $\calO$-ideals by first listing the local ones at each prime divisor $\grp$ of the reduced discriminant $\grd(\calO)$, and then applying the local-global dictionary as in \eqref{eqn:loc-glob}.
        \item Each element of $\Picent(\calO)$ is  represented by an invertible integral two-sided ideal of the form $\gra\calP$ 
        for a unique pair $(\gra, \calP)\in \scrI\times \scrP$.  
    \end{enumerate}
    As a convention, we always choose $\gra_1=O_F$ so that the identity element of $\Cl(O_F)$ (resp.~$\Picent(\calO)$) is represented by $O_F$ (resp.~$\calO$) itself. 
\end{prop}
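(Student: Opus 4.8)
The plan is to assemble the proposition from two facts already in place above: the short exact sequence \eqref{eqn:picent-seq} and the local--global description of the set $\scrP$ of primitive two-sided $\calO$-ideals. Write $\pi: \Picent(\calO)\twoheadrightarrow\prod_{\grp\mid\grd(\calO)}\Picent(\calO_\grp)$ for the surjection in \eqref{eqn:picent-seq}; by exactness its kernel is the image of $\Cl(O_F)$, and concretely this kernel consists precisely of the classes $[\gra\calO]$ with $\gra$ a fractional $O_F$-ideal, the map $\iota: [\gra]\mapsto[\gra\calO]$ being injective (if $\gra\calO$ is generated by some $f\in F^\times$, then $\gra_\grp=fO_{F_\grp}$ at every $\grp$, so $\gra=fO_F$).

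The first step is to record that the primitive-factor construction of the preceding paragraphs furnishes a bijection $\scrP\xrightarrow{\sim}\prod_{\grp\mid\grd(\calO)}\Picent(\calO_\grp)$, namely $\calP\mapsto\pi([\calP])$. Indeed, at each $\grp\mid\grd(\calO)$ the assignment $\calP_\grp\mapsto[\calP_\grp]$ is a bijection $\scrP_\grp\xrightarrow{\sim}\Picent(\calO_\grp)$ by \eqref{eqn:loc-desc-picent} and the uniqueness of the factorization $L_\grp=\grp^{-i(L_\grp)}L_\grp^\sharp$; these glue, via the local--global dictionary \eqref{eqn:loc-glob}, to the asserted global bijection, with the cardinality count of \eqref{eqn:global-prim-card}. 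In other words $\scrP$ is a full set of representatives for the cosets of $\ker\pi$ in $\Picent(\calO)$.

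For existence, given $[P]\in\Picent(\calO)$ I would take $\calP\in\scrP$ to be the unique primitive ideal with $\pi([\calP])=\pi([P])$. Then $[P][\calP]^{-1}\in\ker\pi$, hence $[P][\calP]^{-1}=[\gra\calO]$ for some fractional $O_F$-ideal $\gra$; replacing $\gra$ by the unique $\gra_i\in\scrI$ lying in its $\Cl(O_F)$-class does not alter $[\gra\calO]$, and so $[P]=[\gra_i\calO][\calP]=[\gra_i\calP]$. The ideal $\gra_i\calP$ is integral, since $\gra_i\subseteq O_F$ gives $\gra_i\calP\subseteq\calP\subseteq\calO$, and it is invertible as a product of the invertible two-sided $\calO$-ideals $\gra_i\calO$ and $\calP$. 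For uniqueness, suppose $[\gra_i\calP]=[\gra_j\calP']$ with $(\gra_i,\calP),(\gra_j,\calP')\in\scrI\times\scrP$. Applying $\pi$ and using $\pi([\gra_i\calO])=\pi([\gra_j\calO])=1$ yields $\pi([\calP])=\pi([\calP'])$, so $\calP=\calP'$ by the bijection of the first step; cancelling $[\calP]$ on the right then gives $[\gra_i\calO]=[\gra_j\calO]$, whence $[\gra_i]=[\gra_j]$ in $\Cl(O_F)$ by injectivity of $\iota$, and finally $\gra_i=\gra_j$ because $\scrI$ contains exactly one representative per class. For the base-point convention, note that $\calO_\grp$ is a primitive two-sided $\calO_\grp$-ideal at every $\grp$, so $\calO\in\scrP$, and with $\gra_1=O_F$ the pair $(\gra_1,\calO)$ gives $\gra_1\calO=\calO$, which represents the identity of $\Cl(O_F)$ and of $\Picent(\calO)$.

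I do not expect a genuine obstacle, as the substantive work has been front-loaded into the construction of $\scrP$ and into the exactness of \eqref{eqn:picent-seq}. The only points needing a little care are the compatibility of the primitive-factor section with $\pi$ --- that is, that $\pi([\calP])$ for $\calP\in\scrP$ hits the prescribed tuple of local classes, which is immediate because $\calP_\grp$ is already its own primitive factor --- and the explicit identification of $\ker\pi$ with $\iota(\Cl(O_F))$ together with the injectivity of $\iota$; both follow directly from \eqref{eqn:loc-desc-picent} and \eqref{eqn:picent-seq}. One should also be mindful to cancel $[\calP]$ on a fixed side throughout, so that no commutativity assumption on $\Picent(\calO)$ is needed.
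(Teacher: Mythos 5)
Your proposal is correct and follows essentially the same route as the paper, which derives the proposition by combining the primitive-factor section $\scrP_\grp\to\Picent(\calO_\grp)$, the local--global dictionary defining $\scrP$, and the exact sequence \eqref{eqn:picent-seq}; you have merely spelled out the existence/uniqueness bookkeeping that the paper leaves implicit. The details you supply (injectivity of $[\gra]\mapsto[\gra\calO]$, compatibility of the section with $\pi$, one-sided cancellation) are all sound.
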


We conclude this section with the following simple lemma, which will be applied frequently in the next section.

\begin{lem}\label{lem:equiv-primitive}
    Let $K$ be a maximal subfield of $D$, which is necessarily a quadratic extension of $F$. Let $\grb$ be a locally principal integral ideal of the order $B\coloneqq K\cap \calO$. Then $\grb$ is a primitive $B$-ideal if and only if $\grb\calO$ is a primitive right $\calO$-ideal. 
\end{lem}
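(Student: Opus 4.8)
The plan is to reduce to a purely local statement, since primitivity of a locally principal integral one-sided ideal is checked place by place. Fix a finite place $\grp$ of $F$, let $\varpi_\grp$ be a uniformizer of $O_{F_\grp}$, and aim to prove that $\grb_\grp$ is a primitive $B_\grp$-ideal if and only if $\grb_\grp\calO_\grp$ is a primitive right $\calO_\grp$-ideal; intersecting this equivalence over all $\grp$ then gives the lemma. (For all but the finitely many $\grp\mid\grd(\calO)$ with $\grb_\grp\neq B_\grp$ both conditions hold trivially, but the argument below is uniform in $\grp$ anyway.)

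First I would record the elementary compatibilities. Since localization is exact and hence commutes with finite intersections, $B_\grp=K_\grp\cap\calO_\grp$; since $B_\grp\subseteq\calO_\grp$ we get $B_\grp\calO_\grp=\calO_\grp$; and because $\grb$ is locally principal we may write $\grb_\grp=\beta_\grp B_\grp$ for some $\beta_\grp\in K_\grp^\times$, whence $\grb_\grp\calO_\grp=\beta_\grp\calO_\grp$ is a principal right $\calO_\grp$-ideal. Integrality also matches up: because $\beta_\grp\in K_\grp$, the equivalence $\beta_\grp\in B_\grp\iff\beta_\grp\in\calO_\grp$ follows from $B_\grp=K_\grp\cap\calO_\grp$, i.e.\ $\grb_\grp\subseteq B_\grp\iff\grb_\grp\calO_\grp\subseteq\calO_\grp$.

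The substantive point is the non-containment condition, namely the claim that $\grb_\grp\subseteq\grp B_\grp$ if and only if $\grb_\grp\calO_\grp\subseteq\grp\calO_\grp$. The forward implication is immediate from $B_\grp\calO_\grp=\calO_\grp$. For the converse, if $\beta_\grp\calO_\grp\subseteq\grp\calO_\grp=\varpi_\grp\calO_\grp$ then $\varpi_\grp^{-1}\beta_\grp\in\calO_\grp$; but $\varpi_\grp^{-1}\beta_\grp$ lies in $K_\grp$, hence in $K_\grp\cap\calO_\grp=B_\grp$, so $\beta_\grp\in\varpi_\grp B_\grp=\grp B_\grp$ and $\grb_\grp=\beta_\grp B_\grp\subseteq\grp B_\grp$. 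Combining this with the integrality and principality observations above yields the local equivalence, and the proof is complete.

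I do not expect a serious obstacle here; the only points deserving a word of care are the identity $B_\grp=K_\grp\cap\calO_\grp$ (localization versus intersection) and the fact that over the discrete valuation ring $O_{F_\grp}$ the ideal $\grp$ is generated by the single element $\varpi_\grp$, which is exactly what makes the ``divide by $\varpi_\grp$'' step legitimate.
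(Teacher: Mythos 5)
Your proof is correct and is essentially the paper's argument: the forward direction uses $B\calO=\calO$ (equivalently $B_\grp\calO_\grp=\calO_\grp$), and the converse hinges on pulling $\grb\calO\subseteq\grp\calO$ back to $\grb\subseteq\grp B$ via $K\cap\calO=B$ — you do this locally by dividing by a uniformizer, while the paper multiplies by $\grp^{-1}$ globally, which is the same step in different clothing. No gap.
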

\begin{proof}
    Suppose that $\grb\calO$ is a primitive right $\calO$-ideal. Then $\grb$ is necessarily a primitive $B$-ideal. Otherwise, there exists a prime ideal $\grp$ of $O_F$ such that $\grb\subseteq \grp B$, which then implies that $\grb\calO\subseteq \grp B\calO=\grp\calO$,  contradicting the primitivity of $\grb\calO$.  

    Conversely, suppose that $\grb$ is a primitive $B$-ideal. If $\grb\calO\subseteq \grp\calO$ for some prime ideal $\grp$ of $O_F$, then $\grp^{-1}\grb\calO\subseteq \calO$, which implies that $\grp^{-1}\grb\subseteq \calO\cap K=B$. This shows that $\grb\subseteq \grp B$, contradicting  the primitivity of $\grb$. Therefore, $\grb\calO$ is a primitive right $\calO$-ideal as desired. 
\end{proof}

\section{\texorpdfstring{The spinor type number formula via the $\Picent(\calO)$-action}{The spinor type number formula via the Picent(O)-action}}\label{sec:Picent-for-tot-def-quat-alg}

In this section, we carry out the first step of the derivation of the \emph{spinor type number formula for $t_\sg(\calO)\coloneqq \abs{\Tp_\sg(\calO)}$}.  Instead of jumping directly into the spinor type number formula, we first provide a simplified proof of K\"orner's type number formula \cite{korner:1987} by substituting the more complicated \emph{Selberg trace formula}  with the much simpler Burnside's lemma based on the $\Picent(\calO)$-action on $\Cl(\calO)$. While a large portion of the material in this part is not new, this re-derivation effort will not be spent in vain. Indeed,  
the derivation of the spinor type number formula follows almost the same route, with the modifications explained in the final part of this section where we obtain a preliminary version of the desired formula. Henceforth we assume that $F$ is a totally real field with ring of integers $O_F$, and $\calO$ is an arbitrary $O_F$-order in a  totally definite quaternion $F$-algebra $D$.

As explained in the introduction, the central Picard group $\Picent(\calO)$ acts on the ideal class set $\Cl(\calO)$ of locally principal fractional right $\calO$-ideal classes  by right multiplication as follows: 
 \[  \mu:  \Cl(\calO) \times \Picent(\calO) \to \Cl(\calO), \qquad ([I],[P]) \mapsto [IP].\]
The action induces a canonical bijection $\Tp(\calO)\simeq \Cl(\calO)/ \Picent(\calO)$, which enables us to compute the type number $t(\calO)\coloneqq \abs{\Tp(\calO)}$ via Burnside's lemma \cite[Theorem 2.113]{Rotman-alg}.  More precisely, we have 
\begin{equation}\label{eq:t3.1}
    t(\calO)=\abs{\Cl(\calO)/ \Picent(\calO)}=\frac{1}{\abs{\Picent(\calO)}}\sum_{[P]\in \Picent(\calO)} \abs{\Cl(\calO)^{[P]}},
\end{equation}
where $\Cl(\calO)^{[P]}\coloneqq \{[I]\in \Cl(\calO)\mid [IP]=[I]\}$ denotes the subset of $\Cl(\calO)$ consisting of the fixed elements of $[P]\in \Picent(\calO)$.  

We fix a complete set of representatives of $\Picent(\calO)$ as in Proposition~\ref{prop:choice-rep-picent} and denote an arbitrary element of it by $P$. In other words, we first fix  a complete set  $\scrI=\{\gra_1, \cdots, \gra_{h(F)}\}$  of integral representatives of $\Cl(O_F)$ with $\gra_1=O_F$.   Then each element of $\Picent(\calO)$ is uniquely represented by an invertible integral two-sided ideal of the form $P=\gra_i\calP$, where $\calP\in\scrP$ is a primitive two-sided $\calO$-ideal uniquely determined by the class $[P]\in \Picent(\calO)$.  Moreover, the identity element of $\Picent(\calO)$ is represented by $\calO$ itself.  Henceforth whenever we write $[P]\in \Picent(\calO)$, we implicitly mean that $P$ has been chosen to be the unique representative of the form $\gra_i\calP$  as above.

Let $\{I_i\subset D\mid 1\leq i\leq h\coloneqq h(\calO)\} $ be a complete set of representatives of $\Cl(\calO)$, and we put $\calO_i\coloneqq \calO_l(I_i)=\{x\in D\mid xI_i\subseteq I_i\}$ for each $1\leq i\leq h$. 
Then $\calO_i=I_iI_i^{-1}$ and $\calO=I_i^{-1}I_i$ for each $i$, where $I_i^{-1}\coloneqq \{y\in D\mid I_iy I_i \subseteq I_i\}$.
 It follows that 
\begin{equation}\label{eq:fixed-pt-equiv}
    [I_i]\in \Cl(\calO)^{[P]} \quad \Longleftrightarrow \quad\exists \alpha\in D^\times \quad\text{such that } I_iPI_i^{-1} =\alpha \calO_i. 
\end{equation}
Naturally,  we define 
\begin{equation}\label{eq:def-X}
      X_P(\calO_i)\coloneqq\{\alpha\in D^\times\mid \alpha\calO_i=I_iPI_i^{-1}\}\subset \calO_i \cap \calN(\calO_i)
\end{equation}
for each $1\leq i\leq h$ and each representative $P$ of $\Picent(\calO)$ as above. Here  $X_P(\calO_i)\subset \calO_i$ because $P$ 
is $\calO$-integral, and $X_P(\calO_i)$ is contained in the normalizer group $\calN(\calO_i)$ because $I_iPI_i^{-1}$ is a two-sided ideal of $\calO_i$ (cf.~\cite[Exercise~I.4.6]{vigneras}). 
If $X_P(\calO_i)\neq\emptyset$, then it carries a simply transitive right $\calO_i^\times$-action. Thus
\begin{equation}\label{eq:orb-one-for-X_P}
    \abs{X_P(\calO_i)/\calO_i^\times}=
    \begin{cases}
        1 & \text{if } [I_i]\in\Cl(\calO)^{[P]},\\
        0 & \text{otherwise},
    \end{cases}
\end{equation}
from which we find that 
\begin{equation}\label{eqn:fpx}
    \abs{\Cl(\calO)^{[P]}}=\sum_{i=1}^h \abs{X_P(\calO_i)/\calO_i^\times}.
\end{equation}
Since $D$ is totally definite,  the unit group index $w_i\coloneqq [\calO_i^\times: O_F^\times]$ is finite for all $i$ by \cite[Lemma~26.5.1]{voight-quat-book}. Thus each $O_F^\times$-quotient $X_P(\calO_i)/O_F^\times$ is a finite set, and \eqref{eqn:fpx} can be rewritten as 
\begin{equation}\label{eqn:fpy}
    \abs{\Cl(\calO)^{[P]}}=\sum_{i=1}^h \frac{\abs{X_P(\calO_i)/O_F^\times}}{w_i}.
\end{equation}

Next, we count the finite sets $X_P(\calO_i)/O_F^\times$ by choosing suitable representatives for each $O_F^\times$-orbit in $X_P(\calO_i)$. 
Following \cite[Definition~16.3.1]{voight-quat-book},   the \emph{reduced norm} $\Nr(L)$ of an $O_F$-lattice $L$ in $D$ is defined to be the fractional $O_F$-ideal generated by the set $\{\Nr(x)\mid x\in L\}$. The map sending each $P$ to its reduced norm $\Nr(P)$ induces a group homomorphism from $\Picent(\calO)$ to the narrow class group $\Cl^+(O_F)$, which is still denoted by $\Nr$ by an abuse of notation. 
\begin{lem}\label{lem:norm-prin-gen}
    If  $[P]\not\in \ker\left(\Nr: \Picent(\calO)\to \Cl^+(O_F)\right)$, then $\Cl(\calO)^{[P]}=\emptyset$. 
\end{lem}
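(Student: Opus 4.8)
The plan is to prove the contrapositive: assuming $\Cl(\calO)^{[P]}\neq\emptyset$, I will show that $[P]\in\ker\bigl(\Nr\colon\Picent(\calO)\to\Cl^+(O_F)\bigr)$. Fix a fixed class; by the choice of representatives of $\Cl(\calO)$ made above, it is some $[I_i]$ with left order $\calO_i=\calO_l(I_i)$. By the reformulation \eqref{eq:fixed-pt-equiv}, there is $\alpha\in D^\times$ with $I_iPI_i^{-1}=\alpha\calO_i$. The whole proof then consists of applying the reduced norm of lattices to this equality and exploiting total definiteness.

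First I would compute the reduced norm of the left-hand side. The ideals $I_i$, $P$, $I_i^{-1}$ are all locally principal, and the products $I_i\cdot P$ and $(I_iP)\cdot I_i^{-1}$ are compatible (the right order of each factor equals the left order of the next), so the reduced norm is multiplicative along this chain \cite[\S16.3]{voight-quat-book}, giving $\Nr(I_iPI_i^{-1})=\Nr(I_i)\Nr(P)\Nr(I_i^{-1})$. Since $\calO_i$ is an $O_F$-order, every element of $\calO_i$ has integral reduced norm and $1\in\calO_i$, so $\Nr(\calO_i)=O_F$; combining this with $I_iI_i^{-1}=\calO_i$ yields $\Nr(I_i)\Nr(I_i^{-1})=O_F$. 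As the group of fractional $O_F$-ideals is abelian, $\Nr(I_iPI_i^{-1})=\Nr(P)$. For the right-hand side, $\Nr(\alpha\calO_i)=\Nr(\alpha)\Nr(\calO_i)=\Nr(\alpha)O_F$. Hence $\Nr(P)=\Nr(\alpha)O_F$ as fractional ideals.

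Finally I would invoke total definiteness of $D$: for every $\alpha\in D^\times$ the reduced norm $\Nr(\alpha)$ is totally positive, because $D\otimes_{F,\sigma}\bbR\cong\bbH$ for each embedding $\sigma$ and the reduced norm on $\bbH$ is positive definite. Therefore $\Nr(\alpha)O_F$ is a principal ideal with a totally positive generator, hence trivial in $\Cl^+(O_F)$, so $[\Nr(P)]=1$ in $\Cl^+(O_F)$, i.e.\ $[P]\in\ker(\Nr)$. This is exactly the contrapositive of the statement.

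I do not expect a genuine obstacle here; the only points needing a little care are the compatibility of the factors in $I_iPI_i^{-1}$ (so that multiplicativity of the reduced norm applies) and the identity $\Nr(\calO_i)=O_F$, both of which are routine. One can even bypass these by observing that $\Nr$ descends to a well-defined map $\Cl(\calO)\to\Cl^+(O_F)$—well-definedness being precisely where total definiteness enters—which is equivariant for the $\Picent(\calO)$-action through $\Nr\colon\Picent(\calO)\to\Cl^+(O_F)$; then a fixed point of $[P]$ forces $[\Nr(P)]=1$ immediately.
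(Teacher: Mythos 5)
Your proposal is correct and follows essentially the same route as the paper: fix a fixed point $[I_i]$, write $I_iPI_i^{-1}=\alpha\calO_i$, take reduced norms to get $\Nr(P)=\Nr(\alpha)O_F$, and use total definiteness to conclude that $\Nr(P)$ has a totally positive principal generator. The only difference is that you spell out the multiplicativity/compatibility details behind $\Nr(I_iPI_i^{-1})=\Nr(P)$, which the paper leaves implicit.
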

\begin{proof}
  Indeed, $\Cl(\calO)^{[P]}\neq \emptyset$ only if  $X_P(\calO_i)\neq \emptyset$ for some $1\leq i\leq h$, in which case
\begin{equation}\label{eqn:red-norm-alp}
  \Nr(P)=\Nr(I_iPI_i^{-1})=\Nr(\alpha \calO_i)=\Nr(\alpha)O_F, \qquad \forall \alpha\in X_P(\calO_i).  
\end{equation}
Since $D$ is totally definite, the reduced norm of every nonzero element of $D$ is totally positive. Therefore, $\Cl(\calO)^{[P]}\neq \emptyset$ only if $\Nr(P)$ is principally generated by a totally positive element of $F$, that is, $[P]\in \ker\left(\Nr: \Picent(\calO)\to \Cl^+(O_F)\right)$. 
\end{proof}

In light of \eqref{eqn:red-norm-alp}, we can choose representatives for the $O_F^\times$-orbits in $X_P(\calO_i)$ by specifying their reduced norms whenever $X_P(\calO_i)\neq \emptyset$. To do this,  we first fix some totally positive generators of $\Nr(P)$ for each $[P]\in \ker(\Nr)$ in a systematic way below.   Since $P$ factors uniquely as $\gra\calP$ with $\gra\in \scrI$ and $\calP\in \scrP$, its reduced norm is of the form $\gra^2\Nr(\calP)$.

\begin{sect}[Representatives for the $O_F^\times$-orbits in $X_P(\calO_i)$]\label{sect:rep-orbits-XPO}
\begin{enumerate}[label=(\arabic*), wide=0pt]
    \item Let $\nu(\calO)\coloneqq \{\Nr(\calP)\mid \calP\in \scrP\}$ be the set of reduced norms of all primitive two-sided ideals of $\calO$ as in \cite{korner:1987}.   We fix a totally positive generator $\beta_{(\gra, \wp)}$ of $\gra^2\wp$ for every pair $(\gra, \wp)\in \scrI\times \nu(\calO)$ such that the narrow ideal class $[\gra^2\wp]_+\in \Cl^+(O_F)$ is trivial.
    \item Let $\scrU$ be a complete set of representatives of $O_{F, +}^\times/O_F^{\times2}$, where $O_{F, +}^\times$ denotes the group of totally positive units of $O_F$, and $O_F^{\times2}$ its subgroup consisting of all perfect square units. 
    \item For each $P=\gra \calP$ with $[P]\in \ker(\Nr)$, we put $\wp=\Nr(\calP)$ and construct a subset of $X_P(\calO_i)$ for every $1\leq i\leq h$ as follows: 
\begin{equation}\label{eq:def-Z}
    Z_P(\calO_i)\coloneqq \{\alpha\in X_P(\calO_i)\mid \Nr(\alpha)=\varepsilon \beta_{\gra, \wp} \text{ for some } \varepsilon\in \scrU\}.
\end{equation}
The set $Z_P(\calO_i)$ is closed under multiplication by $\{\pm 1\}$, and the inclusion map $Z_P(\calO_i)\hookrightarrow X_P(\calO_i)$ induces a bijection $Z_P(\calO_i)/\{\pm 1\}\to X_P(\calO_i)/O_F^\times$.
\end{enumerate}
\end{sect}

Now \eqref{eqn:fpy} can be further rewritten as 
\begin{equation}\label{eq:fix-Z}
    \abs{\Cl(\calO)^{[P]}}=\frac{1}{2}\sum_{i=1}^h \frac{\abs{Z_P(\calO_i)}}{w_i}.
\end{equation}
To count the cardinality of $Z_P(\calO_i)$, we separate its non-central part 
\begin{equation}\label{eq:noncentral-Z}
    Z_P^\circ(\calO_i)\coloneqq Z_P(\calO_i)\smallsetminus F^\times
\end{equation}
from its central part $Z_P(\calO_i)\cap F^\times$.
\begin{lem}\label{lem:central-Z}
  We have $X_P(\calO_i)\cap F^\times\neq \emptyset$ for some   $1\leq i \leq h$ if and only if $P=\calO$, in which case $X_P(\calO_i)\cap F^\times=O_F^\times$ for every $i$. In particular, 
  \begin{equation}\label{eq:ZOF}
   \forall 1\leq i \leq h, \qquad    \abs{Z_P(\calO_i)\cap F^\times}=\begin{cases*}
          2& if $P=\calO$, \\
          0& otherwise.
      \end{cases*}
  \end{equation}
\end{lem}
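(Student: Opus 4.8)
The plan is to reduce everything to the observation that $X_P(\calO_i)$ can contain a central element only when $[P]$ is the identity class in $\Picent(\calO)$; the remainder is bookkeeping with the ideal identities $\calO=I_i^{-1}I_i$, $\calO_i=I_iI_i^{-1}$, $\calO_iI_i=I_i$ and associativity of products of locally principal lattices.

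First I would handle the case $P=\calO$. Since $\calO$ is the right order of $I_i$, one has $I_i\calO I_i^{-1}=I_iI_i^{-1}=\calO_i$, so by definition $X_\calO(\calO_i)=\{\alpha\in D^\times\mid \alpha\calO_i=\calO_i\}=\calO_i^\times$. Intersecting with $F^\times$ and using $\calO_i\cap F=O_F$ (as $\calO_i$ is an $O_F$-order), we get $X_\calO(\calO_i)\cap F^\times=O_F^\times$ for every $i$; in particular it is nonempty. For the converse, suppose $\alpha\in X_P(\calO_i)\cap F^\times$ for some $i$, that is, $\alpha\calO_i=I_iPI_i^{-1}$. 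Multiplying this identity on the left by $I_i^{-1}$ and on the right by $I_i$, and using $I_i^{-1}I_i=\calO$, $\calO_iI_i=I_i$, that $P$ is a two-sided $\calO$-ideal, and associativity, one obtains $P=I_i^{-1}(\alpha\calO_i)I_i=\alpha I_i^{-1}I_i=\alpha\calO$ with $\alpha\in F^\times$. Hence $[P]$ is the identity of $\Picent(\calO)$, and since by our convention the identity class is represented by $\calO$ itself, this forces $P=\calO$. Combining the two halves yields the asserted equivalence together with the ``in which case'' statement that $X_P(\calO_i)\cap F^\times=O_F^\times$ for all $i$ exactly when $P=\calO$.

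It remains to deduce the formula for $\abs{Z_P(\calO_i)\cap F^\times}$. If $P\neq\calO$, then $X_P(\calO_i)\cap F^\times=\emptyset$ by the above, hence so is $Z_P(\calO_i)\cap F^\times$, giving cardinality $0$. If $P=\calO$, I would invoke the inclusion-induced bijection $Z_P(\calO_i)/\{\pm1\}\to X_P(\calO_i)/O_F^\times$ recorded just after \eqref{eq:def-Z}: an $O_F^\times$-orbit in $X_P(\calO_i)$ (resp.\ a $\{\pm1\}$-orbit in $Z_P(\calO_i)$) meets $F^\times$ if and only if it is contained in $F^\times$, so this bijection matches the ``central'' orbits on the two sides. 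Since $X_\calO(\calO_i)\cap F^\times=O_F^\times$ is a single $O_F^\times$-orbit, $Z_\calO(\calO_i)\cap F^\times$ is nonempty and consists of a single $\{\pm1\}$-orbit, hence has exactly two elements as $-1\neq1$ in characteristic $0$. The only points requiring any care are the verification of the ideal identities used in the conjugation step and the fact that the orbit bijection respects centrality; neither is a genuine obstacle.
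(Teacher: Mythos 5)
Your proposal is correct and follows essentially the same route as the paper: compute $X_\calO(\calO_i)\cap F^\times=\calO_i^\times\cap F^\times=O_F^\times$ directly, conjugate back by $I_i$ to see that a central element forces $P=\alpha\calO$ and hence $P=\calO$ by the choice of representatives in Proposition~\ref{prop:choice-rep-picent}, and then read off \eqref{eq:ZOF} from the bijection $Z_P(\calO_i)/\{\pm1\}\to X_P(\calO_i)/O_F^\times$. The only difference is that you spell out the centrality-matching of orbits, which the paper leaves implicit.
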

\begin{proof}
    If $P=\calO$, then by definition 
    \[X_\calO(\calO_i)\cap F^\times =\{\alpha\in D^\times\mid \alpha\calO_i=I_i\calO I_i^{-1}\}\cap F^\times=\calO_i^\times\cap F^\times=O_F^\times, \quad \forall 1\leq i\leq h.\]
    Conversely, suppose that there exists $a\in X_P(\calO_i)\cap F^\times$ for some $1\leq i\leq h$. Then $P=I_i^{-1}aI_i=a\calO$, which represents the identity element of $\Picent(\calO)$.  Thus $P=\calO$ by our choice of representatives of $\Picent(\calO)$ in Proposition~\ref{prop:choice-rep-picent}. Now \eqref{eq:ZOF} follows directly from  the canonical bijection $Z_P(\calO_i)/\{\pm 1\}\to X_P(\calO_i)/O_F^\times$.
\end{proof}

From Lemma~\ref{lem:central-Z}, if $P\neq \calO$, then $Z_P^\circ(\calO_i)=Z_P(\calO_i)$ for every $1\leq i\leq h$. 
Recall from \cite[\S V.2]{vigneras} or \cite[Definition~26.1.4]{voight-quat-book} that the \emph{mass of $\calO$} is defined to be the following weighted sum 
\begin{equation}\label{eq:mass-def}
    \Mass(\calO)\coloneqq \sum_{i=1}^h\frac{1}{w_i}=\sum_{[I]\in \Cl(\calO)}\frac{1}{[\calO_l(I)^\times: O_F^\times]},
\end{equation}
whose  value can be computed explicitly by the Eichler mass formula  in \cite[Theorem~1]{korner:1987} or \cite[Theorem~26.1.5]{voight-quat-book} given as follows
    \begin{equation}
\label{eq:234}
  \Mass(\calO)=\frac{h(F)\abs{\zeta_F(-1)}\Nm(\grd(\calO))}{2^{[F:\Q]-1}}\prod_{\grp\ddiv
    \grd(\calO)}\frac{1-\Nm(\grp)^{-2}}{1-e_\grp(\calO)\Nm(\grp)^{-1}}.  
\end{equation}
Here $\grd(\calO)$ denotes the reduced discriminant of $\calO$, $e_\grp(\calO)$ is the Eichler invariant of $\calO$ at $\grp$ in Definition~\ref{defn:res-unr-order}, and $\Nm(\gra)\coloneqq \abs{O_F/\gra}$ for any integral $O_F$-ideal $\gra$.

Combining the equations \eqref{eq:t3.1}, \eqref{eq:fix-Z},  \eqref{eq:ZOF} and \eqref{eq:mass-def}, we get 
\begin{equation}\label{eq:type-step1}
\begin{split}
        t(\calO)&=\frac{\sum_{[P]\in \Picent(\calO)} \abs{\Cl(\calO)^{[P]}}}{\abs{\Picent(\calO)}}=\frac{1}{2\abs{\Picent(\calO)}}\,\sum_{[P]\in \Picent(\calO)} \sum_{i=1}^h\frac{\abs{Z_P(\calO_i)}}{w_i}\\
       &= \frac{\Mass(\calO)}{\abs{\Picent(\calO)}}+\frac{1}{2\abs{\Picent(\calO)}}\,\sum_{[P]\in \Picent(\calO)} \sum_{i=1}^h\frac{\abs{Z_P^\circ(\calO_i)}}{w_i}.
\end{split}
\end{equation}

For the rest of this section, we focus on the computation of the summation of the $\abs{Z_P^\circ(\calO_i)}/w_i$'s. Given $\alpha\in Z_P^\circ(\calO_i)$, we put $K_\alpha\coloneqq F(\alpha)$ and $B_\alpha\coloneqq K_\alpha\cap \calO_i$. Since  $Z_P^\circ(\calO_i)\subseteq X_P(\calO_i)\subset \calO_i\cap \calN(\calO_i)$ by \eqref{eq:def-X} and $D$ is totally definite,  we find that 
\begin{enumerate}[label=(\roman*)]
    \item $K_\alpha$ is a CM-extension of $F$ and $(B_\alpha, \alpha)$ is a pointed CM $O_F$-order in $K_\alpha$ with a distinguished point $\alpha\in B_\alpha$;
    \item the canonical inclusion $\iota: B_\alpha\hookrightarrow \calO_i$ is an optimal  embedding (i.e.~$\iota(K_\alpha)\cap \calO_i=\iota(B_\alpha)$) that sends $\alpha$ inside the normalizer group $\calN(\calO_i)$. In other words, $\iota$ is a restricted optimal embedding in the sense of Definition~\ref{def:opt-emb} below (see also \eqref{eq:global-REmb-intro}). 
\end{enumerate}
By definition, two pointed CM $O_F$-orders $(B,\lambda)$ and $(B', \lambda')$ are  isomorphic if there exists an $O_F$-isomorphism $\psi: B\to B'$ such that $\psi(\lambda)=\lambda'$. Such an isomorphism is necessarily unique. We collect all pointed CM $O_F$-orders of the form $(B_\alpha, \alpha)$  together and consider their set of isomorphism classes: 
\begin{equation}\label{eq:def-B0}
    \scrB_0\coloneqq \left \{(B_\alpha, \alpha)\,\middle\vert\, \alpha\in \coprod_{[P]\in \Picent(\calO)}\coprod_{i=1}^h Z_P^\circ(\calO_i)\right\}/\!\cong.
\end{equation}
Each isomorphism class $[(B_\alpha, \alpha)]\in \scrB_0$ can be realized as an abstract pointed CM $O_F$-order as follows. Let  $f(x)=x^2-\Tr(\alpha)x+\Nr(\alpha)\in O_F[x]$ be the minimal polynomial of $\alpha$ over $F$, and $\calK_f\coloneqq F[x]/(f(x))$ be the abstract quadratic extension of $F$ with the distinguished point $\lambda_0\coloneqq x+(f(x))$.  There is a canonical $F$-isomorphism $\psi: \calK_f\to K_\alpha$ mapping  $\lambda_0$ to $\alpha$.  If we put $B_0\coloneqq \psi^{-1}(B_\alpha)$, then $\psi$ defines an isomorphism between the pointed CM $O_F$-orders $(B_0, \lambda_0)$ and $(B_\alpha, \alpha)$.  Here the subscript $_0$ indicates that $(B_0, \lambda_0)$ is constructed from a member of $\scrB_0$, and  we  shall identify $\scrB_0$ with the set of abstract pointed CM $O_F$-orders consisting of the $(B_0, \lambda_0)$'s thus constructed.  
For each pointed CM $O_F$-order $(B, \lambda)$, consider the following  subset of $Z_P^\circ(\calO_i)$: 
\begin{equation}\label{eq:def-ZPBlambda}
    Z_P^\circ(\calO_i, (B, \lambda))\coloneqq \{\alpha\in Z_P(\calO_i)\mid (B_\alpha, \alpha)\cong (B, \lambda)\}. 
\end{equation}
Now $Z_P^\circ(\calO_i)$ partitions into a disjoint union of subsets according to the isomorphism classes of the $(B_\alpha, \alpha)$'s as follows: 
\begin{equation}\label{eq:part-ZP}
    Z_P^\circ(\calO_i)=\coprod_{(B_0, \lambda_0)\in \scrB_0}Z_P^\circ(\calO_i, (B_0, \lambda_0)).
\end{equation}

In the above discussion, we have constructed a finite set $\scrB_0$ of pointed CM $O_F$-orders from the finite sets $Z_P^\circ(\calO_i)$ with  $[P]\in \Picent(\calO)$ and $1\leq i \leq h$.
To compute the summation of $\abs{Z_P^\circ(\calO_i)}/w_i$'s in \eqref{eq:type-step1}, we do it in the reverse way to recover the $Z_P^\circ(\calO_i)$'s from $\scrB_0$, provided that $\scrB_0$ itself can be  narrowed down abstractly.  This can be carried out with the help of \emph{restricted optimal embeddings},  a  notion introduced by K\"orner \cite{korner:1987} that is finer than the  classical notion of \emph{optimal embeddings}. For the reader's convenience, we  provide its precise definition below. It can be defined more generally for both the global and local cases. In particular, there is no need to restrict to the totally definite case.  See \cite[Chapter~30]{voight-quat-book} for a detailed exposition on optimal embeddings.

\begin{defn}\label{def:opt-emb}
Let $O_\calF$ be the ring of integers of either a global field or a nonarchimedean local field $\calF$, and $\grO$ be an  $O_\calF$-order in an arbitrary quaternion $\calF$-algebra $\calD$. Let $\calB$ be an $O_\calF$-order in a semisimple quadratic $\calF$-algebra $\calK$.   We write $\Emb(\calB, \grO)$ for the set of optimal embeddings from $\calB$ into $\grO$ as in \eqref{eq:def-opt-emb}.  Fix an element $\lambda\in \calB\smallsetminus O_\calF$. 
    A \emph{restricted optimal embedding} from the pointed  $O_\calF$-order $(\calB, \lambda)$ into $\grO$ is an optimal embedding $\varphi\in \Emb(\calB, \grO)$ that sends $\lambda$ into the normalizer group $\calN(\grO)$.  The set of restricted optimal embeddings is denoted by
\begin{equation}
    \REmb((\calB, \lambda), \grO)\coloneqq \{\varphi\in \Emb(\calB, \grO)\mid  \varphi(\lambda)\in \calN(\grO)\}.  
\end{equation}
The set $ \REmb((\calB, \lambda), \grO)$  is stable under the right conjugate action of $\grO^\times$ on $\Emb(\calB, \grO)$, and we denote the number of orbits as 
\begin{equation}\label{eq:def-conj-res-opt-emb}
    n((\calB, \lambda), \grO, \grO^\times)\coloneqq \abs{\REmb((\calB, \lambda), \grO)/\grO^\times}. 
\end{equation}
\end{defn}


\begin{rem}\label{rem:res-opt-emb-eq-1}
    If $\lambda\in \calF^\times\calB^\times$, then $\varphi(\lambda)\in \calF^\times\grO^\times \subset \calN(\grO)$ for every $\varphi\in \Emb(\calB, \grO)$, from which it follows that $\REmb((\calB, \lambda), \grO)=\Emb(\calB, \grO)$ in this case.  If further $\calF$ is a nonarchimedean local field and $\grO$ is the split maximal order $\Mat_2(O_\calF)$, then 
    $n((\calB, \lambda), \grO, \grO^\times)=m(\calB, \grO, \grO^\times)=1$
    according to \cite[Theorem~II.3.2]{vigneras}, where  $m(\calB,  \grO, \grO^\times)\coloneqq \abs{\Emb(\calB, \grO)/\grO^\times}$  as in \eqref{eq:no-conjcls-opt}.
\end{rem}

We return to the previous totally definite setting. Let $(B, \lambda)$ be a pointed CM $O_F$-order with fraction field $K$.  
For each $1\leq i\leq h$, an element  $\varphi\in \REmb((B, \lambda), \calO_i)$ determines an invertible two-sided $\calO$-ideal $I_i^{-1}\varphi(\lambda) I_i$. Given $[P]\in \Picent(\calO)$, we put 
\begin{equation}
    \REmb_P((B, \lambda), \calO_i)\coloneqq \{\varphi\in \REmb((B, \lambda), \calO_i)\mid I_i^{-1}\varphi(\lambda)I_i=P\}. 
\end{equation}
For example, if $\alpha\in Z^\circ_P(\calO_i)$, then the inclusion map $\iota: B_\alpha\hookrightarrow \calO_i$ is an element of $\REmb_P((B_\alpha, \alpha), \calO_i)$, and there is a canonical bijection 
\begin{equation}\label{eq:tauto-bij}
     \REmb_P((B_\alpha, \alpha), \calO_i)\xrightarrow{\simeq} Z_P^\circ(\calO_i, (B_\alpha, \alpha)),\qquad \varphi\mapsto \varphi(\alpha). 
\end{equation}
In light of the decomposition \eqref{eq:part-ZP}, we would like to show that the above bijection holds more generally for all pointed CM $O_F$-orders $(B_0, \lambda_0)\in \scrB_0$ in place of $(B_\alpha, \alpha)$.  
By definition $\varphi(\lambda_0)\in X_P(\calO_i)$ for every $\varphi\in \REmb_P((B_0, \lambda_0), \calO_i)$. However, it is not a priori clear that $\varphi(\lambda_0)\in Z_P^\circ(\calO_i)$, since  $(B_0, \lambda_0)$ might arise from some element of $Z_{P'}^\circ(\calO_j)$ with $[P']\in \Picent(\calO)$ and $1\leq j\leq h$ different from $[P]$ and $i$. To clarify the situation, recall from Section~\ref{sect:rep-orbits-XPO} that $\nu(\calO)=\{\Nr(\calP)\mid \calP\in \scrP\}$ denotes the set of reduced norms of all primitive two-sided ideals of $\calO$. 
For each pair $(\gra, \wp)\in \scrI\times \nu(\calO)$,   we have fixed a totally positive generator $\beta_{(\gra, \wp)}$ of $\gra^2\wp$ whenever  $[\gra^2\wp]_+\in\Cl^+(O_F)$ is trivial. Moreover, we have also fixed  a complete set of representatives $\scrU$ of $O_{F, +}^\times/O_F^{\times2}$.  In the following proposition, we produce  a  finite set $\scrB\supset \scrB_0$ of pointed CM $O_F$-orders whose members serve as possible candidates for $\scrB_0$. The set $\scrB_0$ is then cut out of $\scrB$ and precisely characterized  in terms of restricted optimal embeddings in \eqref{eq:char-B0}, and the desired generalization of \eqref{eq:tauto-bij} is shown to hold true for every member $(B, \lambda)\in \scrB$ in \eqref{eq:bij-gen-BZ}.

\begin{prop}\label{prop:def-B}
    \begin{enumerate}[label={(\arabic*)},  leftmargin=*]
    \item For each $(B_0, \lambda_0)\in \scrB_0$, there is a unique triple $(\gra, \wp, \varepsilon)\in \scrI\times \nu(\calO)\times \scrU$ such that both of the following hold: 
    \begin{enumerate}[label={\upshape(\roman*)}, align=right, widest=iii,  leftmargin=*]
        \item $\gra^{-1}\lambda_0 B_0$ is a primitive $B_0$-ideal;
        \item $\Nm_{K_0/F}(\lambda_0)=\varepsilon \beta_{(\gra, \wp)}$,  where $K_0\coloneqq \Frac(B_0)$. 
    \end{enumerate}
    Particularly, we have $\Tr_{K_0/F}(\lambda_0)\in \gra$, and $\Nm_{K_0/F}(\lambda_0)O_F=\gra^2\wp$. 
   \item Conversely, for each pair $(\gra, \wp)\in \scrI\times \nu(\calO)$ with $[\gra^2\wp]_+\in\Cl^+(O_F)$ trivial, we construct 
   a  set  $\scrB_{(\gra, \wp)}$ of  pointed CM $O_F$-orders  in the following three steps: 
    \begin{enumerate}[label=(Step~\arabic*), align=right,  leftmargin=*]
    \item Let  $\Pol(\gra, \wp)$ be the set of all monic quadratic  polynomials  of the form
    \begin{equation}\label{eq:irr-poly}
        f_{t, \varepsilon}(x)\coloneqq x^2-tx+\varepsilon\beta_{(\gra,\wp)}\in O_F[x], 
    \end{equation}
        such that $t\in\gra$, $\varepsilon\in \scrU$ and $t^2-4\varepsilon\beta_{(\gra,\wp)}$ is totally negative.
    \item Given $f_{t, \varepsilon}(x)\in \Pol(\gra, \wp)$,  let $\calK_{t, \varepsilon}/F$ be the CM-extension  $F[x]/(f_{t, \varepsilon}(x))$, which is equipped with a distinguished point $\lambda_{t, \varepsilon}\coloneqq x+(f_{t, \varepsilon}(x))$.
    \item Let $\scrB_{(\gra, \wp)}$ be the finite set of pointed CM $O_F$-orders $(B, \lambda)$ such that  all of the following hold
    \begin{itemize}
        \item $B$ is an $O_F$-order in $\calK_{t, \varepsilon}$ for some $f_{t, \varepsilon}(x)\in \Pol(\gra, \wp)$;
        \item the distinguished point  $\lambda\coloneqq \lambda_{t, \varepsilon}\in \calK_{t, \varepsilon}$ is contained in $\gra B$;
        \item $\gra^{-1}\lambda B$ is a primitive $B$-ideal. 
    \end{itemize}
    \end{enumerate}
       Then the union 
       \begin{equation}\label{eq:union-Bap}
          \scrB\coloneqq \bigcup \scrB_{(\gra, \wp)} 
       \end{equation}
        of all such $\scrB_{(\gra, \wp)}$ constructed above is a finite set of pointed CM $O_F$-orders containing a copy of $\scrB_0$. Moreover, the union in \eqref{eq:union-Bap} is disjoint and  all members of $\scrB$ are mutually non-isomorphic. 
           \item For every $(B, \lambda)\in \scrB$ and every $1\leq i\leq h$, the set $\REmb((B, \lambda), \calO_i)$  decomposes into a disjoint union 
     \begin{equation}\label{eq:decomp-REmb}       \REmb((B,\lambda),\calO_i)=\bigsqcup_{[P]\in \Picent(\calO)} \REmb_P((B,\lambda),\calO_i),
    \end{equation}
 where each resulting piece  of the decomposition admits          
 a canonical bijection 
\begin{equation}\label{eq:bij-gen-BZ}
    \REmb_P((B, \lambda), \calO_i)\xrightarrow{\simeq} Z_P^\circ(\calO_i, (B, \lambda)),\qquad \varphi\mapsto \varphi(\lambda). 
\end{equation} 
    In particular, 
    \begin{equation}\label{eq:char-B0}
        \scrB_0=\{(B, \lambda)\in \scrB\mid \exists 1\leq i\leq h \text{ such that } \REmb((B, \lambda), \calO_i)\neq \emptyset\}. 
    \end{equation}
    \end{enumerate}
\end{prop}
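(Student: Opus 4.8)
The plan is to prove the three assertions in order, exploiting throughout that every element of $\scrB_0$ is, by construction, the isomorphism class of a pointed order $(B_\alpha,\alpha)$ for some $\alpha\in Z_P^\circ(\calO_i)$, and translating freely between $\alpha$, the inclusion $\iota\colon B_\alpha\hookrightarrow\calO_i$, and the combinatorial data $(\gra,\wp,\varepsilon)$. For (1) I would fix such an $\alpha$ representing $(B_0,\lambda_0)$, write $P=\gra\calP$ with $(\gra,\calP)\in\scrI\times\scrP$, and set $\wp\coloneqq\Nr(\calP)\in\nu(\calO)$. Then (ii) is immediate: $\alpha\in Z_P(\calO_i)$ forces $\Nr(\alpha)=\varepsilon\beta_{(\gra,\wp)}$ for some $\varepsilon\in\scrU$, and since $\Nr$ restricts on $K_\alpha$ to the field norm, $\Nm_{K_0/F}(\lambda_0)=\varepsilon\beta_{(\gra,\wp)}$; taking ideals gives $\Nm_{K_0/F}(\lambda_0)O_F=\Nr(P)=\gra^2\wp$. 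For (i), from $\alpha\calO_i=I_iPI_i^{-1}=\gra\cdot(I_i\calP I_i^{-1})$, with $I_i\calP I_i^{-1}$ a primitive two-sided $\calO_i$-ideal (primitivity of two-sided ideals is a local condition, preserved under conjugation by the locally principal ideal $I_i$), and from $B_\alpha\calO_i=\calO_i$, one gets $(\gra^{-1}\alpha B_\alpha)\calO_i=\gra^{-1}\alpha\calO_i=I_i\calP I_i^{-1}$, a primitive right $\calO_i$-ideal, so Lemma~\ref{lem:equiv-primitive} yields that $\gra^{-1}\lambda_0 B_0$ is primitive; the consequences $\Tr_{K_0/F}(\lambda_0)\in\gra$ and $\Nm_{K_0/F}(\lambda_0)O_F=\gra^2\wp$ follow from integrality of $\gra^{-1}\lambda_0 B_0$ and the norm computation. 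Uniqueness of the triple: by Lemma~\ref{lem:unique-fac-prim} the factorization $\lambda_0 B_0=\gra\cdot(\gra^{-1}\lambda_0 B_0)$ is unique, so (i) determines $\gra$; then (ii) determines $\wp=\gra^{-2}\Nm_{K_0/F}(\lambda_0)O_F$ and $\varepsilon=\Nm_{K_0/F}(\lambda_0)\beta_{(\gra,\wp)}^{-1}$, while existence is supplied by the $\alpha$ we started from.

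For (2), finiteness of $\scrB_{(\gra,\wp)}$ reduces to that of $\Pol(\gra,\wp)$ — the requirement that $t^2-4\varepsilon\beta_{(\gra,\wp)}$ be totally negative bounds every archimedean absolute value of $t$, so $t$ lies in the intersection of the lattice $\gra$ with a bounded region and hence in a finite set, while $\varepsilon$ ranges over the finite set $\scrU$ — together with the finiteness of the set of $O_F$-orders between $O_F[\lambda_{t,\varepsilon}]$ and the maximal order of $\calK_{t,\varepsilon}$; as $\scrI\times\nu(\calO)$ is finite, so is $\scrB$. That $\scrB$ contains a copy of $\scrB_0$ is precisely (1): for $(B_0,\lambda_0)\in\scrB_0$ take $t=\Tr_{K_0/F}(\lambda_0)$, so $f_{t,\varepsilon}$ is the minimal polynomial of $\lambda_0$ and has totally negative discriminant because $K_0$ is a CM-extension, hence $f_{t,\varepsilon}\in\Pol(\gra,\wp)$, and transporting $(B_0,\lambda_0)$ to $\calK_{t,\varepsilon}$ meets the three bullet conditions of (Step~3). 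For disjointness and mutual non-isomorphism: if $\psi\colon(B,\lambda)\xrightarrow{\sim}(B',\lambda')$ is an isomorphism between members of $\scrB_{(\gra,\wp)}$ and $\scrB_{(\gra',\wp')}$, then $\gra^{-1}\lambda B$ and $\gra'^{-1}\lambda' B'$ are primitive and $\psi$ carries the former onto $\gra^{-1}\lambda' B'$, so uniqueness of the primitive factorization forces $\gra=\gra'$; matching norms then forces $\wp=\wp'$ and $\varepsilon=\varepsilon'$, matching traces forces $t=t'$, so $B,B'$ are orders in the same $\calK_{t,\varepsilon}$ and $\psi$ is an $F$-automorphism of $\calK_{t,\varepsilon}$ fixing $\lambda\notin F$, hence the identity, whence $B=B'$.

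For (3), the crux is the decomposition \eqref{eq:decomp-REmb}, i.e.\ that for $\varphi\in\REmb((B,\lambda),\calO_i)$ the two-sided ideal $I_i^{-1}\varphi(\lambda)\calO_i I_i$ equals \emph{exactly} one of the fixed representatives $P=\gra\calP\in\scrI\times\scrP$. Optimality gives $\varphi(B)=\varphi(K)\cap\calO_i$, the containment $\lambda\in\gra B$ gives $\varphi(\lambda)\in\gra\calO_i$, and Lemma~\ref{lem:equiv-primitive} applied to $\varphi(\gra^{-1}\lambda B)=\gra^{-1}\varphi(\lambda)\varphi(B)$ (using $\varphi(B)\calO_i=\calO_i$) turns the primitivity of $\gra^{-1}\lambda B$ into the statement that $\gra^{-1}\varphi(\lambda)\calO_i$ is a primitive $\calO_i$-ideal; therefore $\varphi(\lambda)\calO_i=\gra\cdot(\gra^{-1}\varphi(\lambda)\calO_i)$ is \emph{the} fractional-ideal-times-primitive factorization, and conjugating by $I_i$ (which preserves primitivity and acts trivially on $O_F$-ideals) produces $\gra\calP$ with $\calP\in\scrP$ a genuine representative. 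The bijection \eqref{eq:bij-gen-BZ} is then routine in both directions: for $\varphi\in\REmb_P((B,\lambda),\calO_i)$, the element $\alpha=\varphi(\lambda)$ lies in $D^\times$ ($D$ being a division algebra and $\lambda\neq0$), satisfies $\alpha\calO_i=I_iPI_i^{-1}$ and $\Nr(\alpha)=\Nm_{K/F}(\lambda)=\varepsilon\beta_{(\gra,\wp)}$, is non-central, and $(B_\alpha,\alpha)=(\varphi(B),\varphi(\lambda))\cong(B,\lambda)$ by optimality, so $\alpha\in Z_P^\circ(\calO_i,(B,\lambda))$; injectivity holds because an $F$-embedding of $K=F(\lambda)$ is determined by the image of $\lambda$; surjectivity because any $\alpha\in Z_P^\circ(\calO_i,(B,\lambda))$ furnishes an isomorphism $(B,\lambda)\xrightarrow{\sim}(B_\alpha,\alpha)$ whose extension to $K$ is a restricted optimal embedding lying in $\REmb_P$. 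Finally \eqref{eq:char-B0} is formal: $\scrB_0$ lies in the right-hand side since each $(B_0,\lambda_0)\in\scrB_0$ admits $\iota$ as a restricted optimal embedding, and conversely a nonempty $\REmb((B,\lambda),\calO_i)$ produces, via \eqref{eq:bij-gen-BZ}, an $\alpha\in Z_P^\circ(\calO_i)$ with $(B_\alpha,\alpha)\cong(B,\lambda)$, so $(B,\lambda)\in\scrB_0$.

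I expect the main obstacle to be that first step of (3): showing $I_i^{-1}\varphi(\lambda)\calO_i I_i$ is literally one of the chosen representatives rather than merely an ideal in its $\Picent(\calO)$-class. This is exactly where the hypothesis $(B,\lambda)\in\scrB$ — so that $\lambda\in\gra B$ with $\gra^{-1}\lambda B$ primitive for a specific $\gra\in\scrI$ — is used, and it requires a careful interplay of optimality, Lemma~\ref{lem:equiv-primitive}, and the conjugation-invariance and uniqueness of primitive factorizations. Everything else is bookkeeping, with the standing caveat that the $\gra$, $\wp$, and $\varepsilon$ appearing in parts (1), (2) and (3) must be identified as the same data.
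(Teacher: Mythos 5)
Your proposal is correct and follows essentially the same route as the paper's proof: the same reduction of part (1) to the unique primitive factorization of $\lambda_0B_0$ via Lemma~\ref{lem:equiv-primitive} and Lemma~\ref{lem:unique-fac-prim}, the same finiteness and invariant-matching arguments in part (2), and the same key step in part (3) of using the primitivity of $\gra^{-1}\lambda B$ together with Lemma~\ref{lem:equiv-primitive} to pin down $I_i^{-1}\varphi(\lambda)I_i$ as a chosen representative $\gra\calP_\varphi$. You correctly identify the crux of (3), and the only points left implicit (integrality $\gra^{-1}\alpha\subseteq B_\alpha$ before invoking Lemma~\ref{lem:equiv-primitive}, and the uniqueness of isomorphisms of pointed CM orders underlying the bijection \eqref{eq:bij-gen-BZ}) are exactly the ones the paper also handles in passing.
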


\begin{proof}
(1) Let $(B_0, \lambda_0)$ be a pointed CM $O_F$-order in $\scrB_0$.
By definition \eqref{eq:def-B0}, there exists $\alpha\in Z_P^\circ(\calO_i)$  for some $[P]\in \Picent(\calO)$ and $1\leq i \leq h$ such that $(B_0, \lambda_0)\cong (B_\alpha, \alpha)$.   Let $P=\gra \calP$ be the unique factorization with $(\gra, \calP)\in \scrI\times \scrP$.  Then $I_i^{-1}\calP I_i$ is a primitive two-sided $\calO_i$-ideal that coincides with $\gra^{-1}\alpha\calO_i$ by \eqref{eq:def-X}. In particular, $\gra^{-1}\alpha\subset \calO_i\cap F(\alpha)=B_\alpha$. Now it follows from Lemma~\ref{lem:equiv-primitive} that $\gra^{-1}\alpha B_\alpha$ is a primitive $B_\alpha$-ideal. By transport of structure, $\gra^{-1}\lambda_0B_0$ is a primitive $B_0$-ideal, so $\lambda_0 B_0=\gra \cdot (\gra^{-1}\lambda_0B_0)$ is a factorization of $\lambda_0 B_0$ into the product of an $O_F$-ideal $\gra$ and a primitive $B_0$-ideal $\gra^{-1}\lambda_0B_0$. Such a factorization is  unique  by Lemma~\ref{lem:unique-fac-prim}, and hence $\gra\in \scrI$ depends uniquely on the pointed CM $O_F$-order $(B_0, \lambda_0)$ and not on the choice of $\alpha\in Z_P^\circ(\calO_i)$ (nor on the choice of $P$).
We have seen in \eqref{eqn:red-norm-alp} that 
\[\Nm_{K_0/F}(\lambda_0)O_F=\Nr(\alpha)O_F=\Nr(P)=\Nr(\gra\calP)=\gra^2\Nr(\calP), \]
so there exists an $O_F$-ideal $\wp\coloneqq \Nr(\calP)\in \nu(\calO)$ with $\Nm_{K_0/F}(\lambda_0)O_F=\gra^2\wp$. Since $\gra\in \scrI$ is already uniquely determined by $(B_0,\lambda_0)$, the same holds for $\wp\in \nu(\calO)$. From \eqref{eq:def-Z}, there is a unique $\varepsilon\in \scrU$ depending only on $(B_0, \lambda_0)$ such that  
\[\Nm_{K_0/F}(\lambda_0)=\Nr(\alpha)=\varepsilon \beta_{(\gra, \wp)}.\]
This proves the existence and uniqueness of the triple $(\gra, \wp, \varepsilon)\in \scrI\times \nu(\calO)\times \scrU$ satisfying conditions (i)--(ii).  Observe that the above argument also shows that given $[P']\in \Picent(\calO)$, we have  $Z_{P'}^\circ(\calO_i, (B_0,\lambda_0))\neq \emptyset$ only if $P'=\gra\calP'$ with $\Nr(\calP')=\wp$. 
Lastly,  the primitive $B_0$-ideal $\gra^{-1} \lambda_0 B_0$ is integral by definition, so $\lambda_0\in \gra B_0$, which in turn implies that $\Tr_{K_0/F}(\lambda_0)\in \gra B_0\cap O_F=\gra$.

(2) By construction, $\scrB_0$ forms a subset of $\scrB$. To show that $|\scrB|<\infty$, it is enough to show that each $\scrB_{(\gra, \wp)}$ is a finite set for every pair  $(\gra, \wp)\in \scrI\times \nu(\calO)$ with $[\gra^2\wp]_+\in\Cl^+(O_F)$ trivial.  
For each $f_{t, \varepsilon}\in \Pol(\gra, \wp)$, there are only finitely many $O_F$-orders in $\calK_{t, \varepsilon}$ containing $O_F[\lambda_{t, \varepsilon}]$. Thus the question reduces further to proving that $\Pol(\gra, \wp)$ is finite, or equivalently, proving that for every $\varepsilon\in \scrU$ there are only finitely many $t\in \gra$ with  $t^2-4\varepsilon\beta_{(\gra, \wp)}$ totally negative. This last finiteness condition can be easily verified, as $\gra$ forms a $\Z$-lattice in $F\otimes_\Z\R$ and all archimedean absolute values of the desired $t$'s are bounded. This concludes the proof that $|\scrB|<\infty$.  

To show that the union in \eqref{eq:union-Bap} is disjoint, observe that each $(B, \lambda)\in \scrB$ remembers which $\scrB_{(\gra, \wp)}$ it resides in. Indeed, $\gra$ is the unique $O_F$-ideal such that $\gra^{-1} \lambda B$ is a primitive $B$-ideal, and $\wp=\gra^{-2}\Nm_{K/F}(\lambda)O_F$ with $K=\Frac(B)$. In other words, $(\gra, \wp)$ is a pair of invariants of the isomorphism class of $(B, \lambda)$.  Thus $\scrB_{(\gra, \wp)}$ and $\scrB_{(\gra', \wp')}$
 share no isomorphic pointed CM $O_F$-orders whenever $(\gra, \wp)\neq (\gra', \wp')$. Inside a particular $\scrB_{(\gra, \wp)}$, each member $(B, \lambda)$ is uniquely determined  by another pair of isomorphism invariants $(f(x), \grf_B)$, where $f(x)\in \Pol(\gra, \wp)$ is the minimal polynomial of $\lambda$ over $F$, and $\grf_B$ is the conductor of $B$ (i.e.~the unique $O_F$-ideal such that $B=O_F+\grf_B O_K$).  Therefore, all members of $\scrB$ are mutually non-isomorphic.

(3) Let $(B, \lambda)$ be an arbitrary pointed CM $O_F$-order in $\scrB$ and fix an $\calO_i$ with $1\leq i\leq h$. 
By definition, the union $\bigcup_P \REmb_P((B,\lambda),\calO_i)$  is  disjoint, so the right hand side of \eqref{eq:decomp-REmb}  forms a subset of $\REmb((B, \lambda), \calO_i)$.  To prove the equality in \eqref{eq:decomp-REmb},  it is enough to show that every $\varphi\in \REmb((B,\lambda),\calO_i)$ falls inside $\REmb_{P_\varphi}((B,\lambda),\calO_i)$ for some $P_\varphi=\gra\calP_\varphi$ with $(\gra, \calP_\varphi)\in \scrI\times \scrP$. 
 From part (2) of the current proposition, there is a unique pair $(\gra, \wp)\in \scrI\times \nu(\calO)$ such that $(B, \lambda)\in \scrB_{(\gra, \wp)}$. In particular, $\gra^{-1}\lambda B$ is a primitive $B$-ideal.
 Since $\varphi \in \Emb(B, \calO_i)$ by definition, it follows from Lemma~\ref{lem:equiv-primitive} that $\gra^{-1}\varphi(\lambda)\calO_i$ is a primitive two-sided $\calO_i$-ideal, or equivalently, $\calP_\varphi\coloneqq \gra^{-1} I_i^{-1}\varphi(\lambda) I_i$ is a primitive two-sided $\calO$-ideal.
If we put $P_\varphi\coloneqq \gra \calP_\varphi$, then $P_\varphi$ is one of the fixed representatives of $\Picent(\calO)$ fixed in Proposition~\ref{prop:choice-rep-picent}, and we have $\varphi\in \REmb_{P_\varphi}((B,\lambda),\calO_i)$. This proves the equality in \eqref{eq:decomp-REmb}. Observe that $\Nr(\calP_\varphi)=\Nr(\gra^{-1} I_i^{-1}\varphi(\lambda) I_i)=\gra^{-2}\Nm_{K/F}(\lambda)=\gra^{-2}\varepsilon \beta_{(\gra, \wp)}=\wp$, where $\varepsilon\in \scrU$ and $K=\Frac(B)$.  Thus the decomposition of $\REmb((B, \lambda), \calO_i)$ in \eqref{eq:decomp-REmb} can be further sharpened into 
\begin{equation}
    \REmb((B,\lambda),\calO_i)=\bigsqcup_{\Nr(\calP)=\wp} \REmb_{\gra\calP}((B,\lambda),\calO_i). 
\end{equation} 
To prove that  the map $\varphi\mapsto \varphi(\lambda)$ establishes a bijection between $\REmb_P((B, \lambda), \calO_i)$ and $Z_P^\circ(\calO_i, (B,\lambda))$ for every $[P]\in \Picent(\calO)$, we may restrict ourselves to those  $P$ that  factorize as $\gra\calP$ with $\Nr(\calP)=\wp$, for otherwise both sets are empty. By the construction of $\scrB_{(\gra, \wp)}$, we have $\Nm_{K/F}(\lambda)=\varepsilon \beta_{(\gra, \wp)}$ for some $\varepsilon\in \scrU$, so it follows from \eqref{eq:def-Z} that $\varphi(\lambda)\in Z_P^\circ(\calO_i)$ for every $\varphi\in \REmb_P((B, \lambda), \calO_i)$. Now the bijectiveness of the canonical map $\REmb_P((B, \lambda), \calO_i)\to Z_P^\circ(\calO_i, (B,\lambda))$
follows directly from the fact that an isomorphism between two pointed CM $O_F$-orders $(B, \lambda)$ and $(B_\alpha, \alpha)$ is unique if it exists. 

Lastly, a pointed CM $O_F$-order  $(B, \lambda)$ belongs to $\scrB_0$ if and only if $Z_P^\circ(\calO_i, (B, \lambda))$ is nonempty for some 
$[P]\in \Picent(\calO)$ and $1\leq i\leq h$. Combining \eqref{eq:decomp-REmb} and \eqref{eq:bij-gen-BZ}, we immediately obtain the characterization of $\scrB_0$ in \eqref{eq:char-B0}.
\end{proof}

We return to the computation of the type number $t(\calO)$. Plugging \eqref{eq:part-ZP}  into \eqref{eq:type-step1} and applying the bijection \eqref{eq:bij-gen-BZ}, we get 
\begin{equation*}
\begin{split}
         t(\calO)&=\frac{\Mass(\calO)}{\abs{\Picent(\calO)}}+\frac{1}{2\abs{\Picent(\calO)}}\,\sum_{[P]\in \Picent(\calO)} \sum_{i=1}^h \sum_{(B_0, \lambda_0)\in \scrB_0} \frac{\abs{Z_P^\circ(\calO_i, (B_0, \lambda_0))}}{w_i}\\
        &= \frac{\Mass(\calO)}{\abs{\Picent(\calO)}}+\frac{1}{2\abs{\Picent(\calO)}}\,\sum_{[P]\in \Picent(\calO)} \sum_{i=1}^h \sum_{(B_0, \lambda_0)\in \scrB_0} \frac{\abs{\REmb_P((B_0, \lambda_0), \calO_i)}}{w_i}.
        \end{split}
        \end{equation*}
        We  swap the order of summation and then apply \eqref{eq:decomp-REmb} to obtain 
        \begin{equation*}
            \begin{split}
   t(\calO)&= \frac{\Mass(\calO)}{\abs{\Picent(\calO)}}+\frac{1}{2\abs{\Picent(\calO)}}\, \sum_{(B_0, \lambda_0)\in \scrB_0}\sum_{i=1}^h \sum_{[P]\in \Picent(\calO)} \frac{\abs{\REmb_P((B_0, \lambda_0), \calO_i)}}{w_i}\\    
    &=\frac{\Mass(\calO)}{\abs{\Picent(\calO)}}+\frac{1}{2\abs{\Picent(\calO)}}\, \sum_{(B_0, \lambda_0)\in \scrB_0} \sum_{i=1}^h \frac{\abs{\REmb((B_0, \lambda_0), \calO_i)}}{w_i}\\  
\end{split}
\end{equation*}
From the characterization of $\scrB_0$ in \eqref{eq:char-B0}, it is harmless to replace $\scrB_0$ by $\scrB$ in the above summation: 
\begin{equation}\label{eq:type-step2}
    t(\calO)    
    =\frac{\Mass(\calO)}{\abs{\Picent(\calO)}}+\frac{1}{2\abs{\Picent(\calO)}}\, \sum_{(B, \lambda)\in \scrB} \sum_{i=1}^h \frac{\abs{\REmb((B, \lambda), \calO_i)}}{w_i}.
\end{equation}
Recall from Definition~\ref{def:opt-emb} that the unit group $\calO_i^\times$ acts on $\REmb((B, \lambda), \calO_i)$ from the right by sending $\varphi\mapsto u^{-1} \varphi u$ for each  $\varphi\in \REmb((B, \lambda), \calO_i)$ and $u\in \calO_i^\times$. Clearly,  the stabilizer of $\varphi$ is $\varphi(B^\times)=\calO_i^\times\cap \Frac(B)^\times$, so the number of elements in the $\calO_i^\times$-orbit of $\varphi$ is given by 
\[[\calO_i^\times: \varphi(B^\times)]=\frac{[\calO_i^\times:O_F^\times]}{[B^\times:O_F^\times]}=\frac{w_i}{[B^\times:O_F^\times]},\]
which is independent of $\varphi$. Thus if we put $w(B)\coloneqq [B^\times: O_F^\times]$, then we have 
\begin{equation}\label{eq:orbits-res-opt}
    \abs{\REmb((B, \lambda), \calO_i)}=\abs{\REmb((B, \lambda), \calO_i)/\calO_i^\times}\cdot \frac{w_i}{w(B)}=\frac{n((B, \lambda), \calO_i, \calO_i^\times)w_i}{w(B)},
\end{equation}
where $n((B, \lambda), \calO_i, \calO_i^\times)$ denotes the number of conjugacy classes of restricted optimal embeddings as in \eqref{eq:def-conj-res-opt-emb}.
Plugging \eqref{eq:orbits-res-opt} into \eqref{eq:type-step2}, we get 
\begin{equation}\label{eq:type-step3}
     t(\calO)    
    =\frac{\Mass(\calO)}{\abs{\Picent(\calO)}}+\frac{1}{2\abs{\Picent(\calO)}}\, \sum_{(B, \lambda)\in \scrB} \frac{1}{w(B)} \sum_{i=1}^h n((B, \lambda), \calO_i, \calO_i^\times).
\end{equation}

Classically in the proof of the Eichler class number formula \cite[Theorem~30.8.6]{voight-quat-book} or more generally the Eichler trace formula \cite[Theorem~41.5.2]{voight-quat-book}, the final step is to apply the \emph{trace formula for optimal embeddings} \cite[Theorem~30.4.7]{voight-quat-book}, which relates the numbers $m(B, \calO_i, \calO_i^\times)\coloneqq \abs{\Emb(B, \calO_i)/\calO_i^\times}$ of conjugacy classes of global optimal embeddings  with those of the local ones for any CM $O_F$-order $B$:
\begin{equation}\label{eq:classic-trf-opt}
    \sum_{i=1}^h m(B, \calO_i, \calO_i^\times)=h(B)\prod_\grp m(B_\grp, \calO_\grp, \calO_\grp^\times). 
\end{equation}
Here $h(B)\coloneqq \abs{\Pic(B)}$ denotes the class number of the CM $O_F$-order $B$, the product runs over all finite places of $F$, and $B_\grp$ (resp.~$\calO_\grp$) denotes the $\grp$-adic completion of $B$ (resp.~$\calO$).  In Proposition~\ref{prop:trf-res-opt-emb},  we shall prove an analogous trace formula for restricted optimal embeddings that applies to every pointed CM $O_F$-order $(B, \lambda)$:
\begin{equation}\label{eq:trf-roe}
    \sum_{i=1}^h n((B,\lambda), \calO_i, \calO_i^\times)=h(B)\prod_\grp n((B_\grp,\lambda), \calO_\grp, \calO_\grp^\times). 
\end{equation}
Here the product on the right hand side is finite by Remark~\ref{rem:finite-prod}, and it can be further simplified when $(B, \lambda)\in \scrB$ by the following lemma. 
\begin{lem}\label{lem:n1-pnmid-d}
  Let $\grd(\calO)$ be the reduced discriminant of $\calO$. 
  Then
\begin{equation}\label{eq:n1-p-nmid-d}
    n((B_\grp,\lambda), \calO_\grp, \calO_\grp^\times)=1, \qquad \forall \grp\nmid \grd(\calO), \quad \forall (B, \lambda)\in \scrB.
\end{equation}
\end{lem}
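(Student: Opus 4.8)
The plan is to reduce everything to Remark~\ref{rem:res-opt-emb-eq-1}. Fix $(B,\lambda)\in\scrB$ and a finite place $\grp\nmid\grd(\calO)$, so that $\calO_\grp\simeq\Mat_2(O_{F_\grp})$ is the split maximal order, $\calN(\calO_\grp)=F_\grp^\times\calO_\grp^\times$, and $\scrP_\grp=\{\calO_\grp\}$. Remark~\ref{rem:res-opt-emb-eq-1} tells us that \emph{as soon as} $\lambda\in F_\grp^\times B_\grp^\times$ we have $\REmb((B_\grp,\lambda),\calO_\grp)=\Emb(B_\grp,\calO_\grp)$ and hence $n((B_\grp,\lambda),\calO_\grp,\calO_\grp^\times)=m(B_\grp,\calO_\grp,\calO_\grp^\times)=1$. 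So the whole task is to check $\lambda\in F_\grp^\times B_\grp^\times$, where $B_\grp$ sits inside $K_\grp\coloneqq K\otimes_F F_\grp$ with $K\coloneqq\Frac(B)$.

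First I would record the data attached to $(B,\lambda)$. By Proposition~\ref{prop:def-B}(2) there is a pair $(\gra,\wp)\in\scrI\times\nu(\calO)$ with $(B,\lambda)\in\scrB_{(\gra,\wp)}$, and from the construction of $\scrB_{(\gra,\wp)}$ I extract the two facts I need: $\lambda\in\gra B$, and $\Nm_{K/F}(\lambda)O_F=\gra^2\wp$ (the latter because the minimal polynomial of $\lambda$ is $x^2-tx+\varepsilon\beta_{(\gra,\wp)}$ for some $\varepsilon\in\scrU$, and $\beta_{(\gra,\wp)}$ generates $\gra^2\wp$). Next I would observe that $\wp_\grp=O_{F_\grp}$: since $\grp\nmid\grd(\calO)$ we have $\scrP_\grp=\{\calO_\grp\}$, so every $\wp\in\nu(\calO)$ satisfies $\wp_\grp=\Nr(\calO_\grp)=O_{F_\grp}$; equivalently, members of $\nu(\calO)$ are supported only on prime divisors of $\grd(\calO)$.

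With this in hand, write $\gra_\grp=\pi^aO_{F_\grp}$ for a uniformizer $\pi$ of $F_\grp$ and set $\mu\coloneqq\pi^{-a}\lambda\in K_\grp$. The inclusion $\lambda\in\gra B$ localizes to $\mu\in B_\grp$, while $\Nm_{K/F}(\lambda)O_{F_\grp}=\gra_\grp^2\wp_\grp=\pi^{2a}O_{F_\grp}$ gives $\Nm_{K_\grp/F_\grp}(\mu)\in O_{F_\grp}^\times$. Since $\mu$ lies in the $O_{F_\grp}$-order $B_\grp$ it is integral over $O_{F_\grp}$, so its conjugate $\bar\mu=\Tr_{K_\grp/F_\grp}(\mu)-\mu$ again lies in $B_\grp$; from $\mu\bar\mu=\Nm_{K_\grp/F_\grp}(\mu)\in O_{F_\grp}^\times$ we get $\mu^{-1}=\Nm_{K_\grp/F_\grp}(\mu)^{-1}\bar\mu\in B_\grp$, i.e.\ $\mu\in B_\grp^\times$. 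Hence $\lambda=\pi^a\mu\in F_\grp^\times B_\grp^\times$, and the first paragraph concludes the proof. Note this computation is uniform in the splitting type of $\grp$ in $K$ (it makes no difference whether $K_\grp$ is a field or $F_\grp\times F_\grp$).

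Every step here is elementary; the one point that takes a moment of thought — and which I regard as the crux — is recognizing that the two seemingly mild defining conditions of $\scrB_{(\gra,\wp)}$, namely integrality ($\lambda\in\gra B$) together with the exact norm equality ($\Nm_{K/F}(\lambda)O_F=\gra^2\wp$), combine at each $\grp\nmid\grd(\calO)$ to force $\pi^{-a}\lambda$ to be a unit in $B_\grp$. Once that is in place, Remark~\ref{rem:res-opt-emb-eq-1} (which itself rests on \cite[Theorem~II.3.2]{vigneras} for the split maximal order) does the rest.
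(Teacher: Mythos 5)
Your proposal is correct and follows essentially the same route as the paper's proof: extract $\lambda\in\gra B$ and $\Nm_{K/F}(\lambda)O_F=\gra^2\wp$ from the construction of $\scrB_{(\gra,\wp)}$, note $\wp_\grp=\Nr(\calP_\grp)=\Nr(\calO_\grp)=O_{F_\grp}$ for $\grp\nmid\grd(\calO)$, conclude that $\pi^{-\ord_\grp(\gra)}\lambda\in B_\grp^\times$, and invoke Remark~\ref{rem:res-opt-emb-eq-1}. Your conjugate computation merely makes explicit the step (an integral element of unit norm is a unit) that the paper leaves implicit.
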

\begin{proof}
Let $(\gra, \wp)\in \scrI\times \nu(\calO)$ be the pair of isomorphism invariants of $(B, \lambda)\in \scrB$ such that $(B, \lambda)\in \scrB_{(\gra, \wp)}$ and put $K\coloneqq \Frac(B)$.  By construction,  $\Nm_{K/F}(\lambda)O_F=\gra^2\wp$, and $\wp=\Nr(\calP)$ for some primitive two-sided $\calO$-ideal $\calP$.  Since $\grp\nmid \grd(\calO)$, we have $\calO_\grp\simeq \Mat_2(O_{F_\grp})$, and hence $\calP_\grp=\calO_\grp$.   Let $\pi_\grp$ be a uniformizer of $F_\grp$. Then $\Nm_{K/F}(\pi_\grp^{-\ord_\grp(\gra)}\lambda)O_{F_\grp}=\wp_\grp=\Nr(\calO_\grp)=O_{F_\grp}$. On the other hand, $\pi_\grp^{-\ord_\grp(\gra)}\lambda\in B_\grp$ since $\gra^{-1}\lambda B$ is $B$-primitive (particularly, $B$-integral). We conclude that $\pi_\grp^{-\ord_\grp(\gra)}\lambda\in B_\grp^\times$. Now  \eqref{eq:n1-p-nmid-d} follows immediately from Remark~\ref{rem:res-opt-emb-eq-1}.     
\end{proof}

The trace formula \eqref{eq:trf-roe} was used implicitly\footnote{In K\"orner's notation, the order $\calO$ here is denoted by $G$ and a pointed CM $O_F$-order $(B, \lambda)\in \scrB$ is denoted by $(\Omega, x)$ (where $x$ is often omitted). Correspondingly,  the  number  $n((B_\grp, \lambda), \calO_\grp, \calO_\grp^\times)$  is denoted by him as $E'(\Omega_p, G_p)$, and their product as $E'_G(\Omega)$. However, the left hand side of \eqref{eq:trf-roe} does not appear explicitly in  K\"orner's paper \cite{korner:1987} as he  works with orbital integrals instead.  } by K\"orner in \cite{korner:1987}.   Taking it granted for now, we immediately recover K\"orner's type number formula by combining \eqref{eq:trf-roe} with \eqref{eq:type-step3}.

\begin{thm}[{\cite[Theorem 3]{korner:1987}}] \label{thm:Korner}
  The type number $t(\calO)$ of any  $O_F$-order $\calO$ in a totally definite quaternion $F$-algebra $D$  can be computed as follows:
    \begin{equation}\label{eq:Korner-formula}
        t(\calO)=\frac{1}{\abs{\Picent(\calO)}} \Big( \Mass(\calO)+\frac{1}{2} \sum_{(B,\lambda)\in\scrB} \frac{h(B)}{w(B)}\prod_{\grp\mid \grd(\calO)} n((B_\grp, \lambda), \calO_\grp, \calO_\grp^\times) \Big), 
    \end{equation}
 where 
 \begin{itemize}
     \item  $\Mass(\calO)$ denotes the mass of $\calO$ as defined in \eqref{eq:mass-def} whose value can be computed by the Eichler mass formula \eqref{eq:234},
     \item $\scrB$ denotes the finite set of pointed CM $O_F$-orders $(B, \lambda)$ constructed in Proposition~\ref{prop:def-B},
     \item $h(B)$ (resp.~$w(B)$) denotes the class number (resp.~the unit index $[B^\times: O_F^\times]$) of the  CM $O_F$-order $B$, and the value of  
     $h(B)$ can be calculated by the Dedekind’s formula in \cite[\S III.5, p.~95]{vigneras} or \cite[p.~74]{vigneras:ens}.  
 \end{itemize}
\end{thm}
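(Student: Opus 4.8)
The plan is to combine everything assembled in Section~\ref{sec:Picent-for-tot-def-quat-alg} with the trace formula for restricted optimal embeddings \eqref{eq:trf-roe}, which is to be proved in Proposition~\ref{prop:trf-res-opt-emb}. The derivation up through \eqref{eq:type-step3} is already complete: starting from Burnside's lemma \eqref{eq:t3.1}, the fixed-point count \eqref{eqn:fpx} was translated into the sets $X_P(\calO_i)$ via \eqref{eq:orb-one-for-X_P}, reindexed by totally positive norm generators to get the sets $Z_P(\calO_i)$ and \eqref{eq:fix-Z}, separated into central and non-central parts with Lemma~\ref{lem:central-Z} accounting for the $\Mass(\calO)$ contribution, and finally the non-central parts were matched bijectively with restricted optimal embeddings $\REmb((B,\lambda),\calO_i)$ for $(B,\lambda)$ running over the finite set $\scrB$ of Proposition~\ref{prop:def-B}. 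The orbit-counting identity \eqref{eq:orbits-res-opt}, which uses that the $\calO_i^\times$-stabilizer of any $\varphi$ is $\varphi(B^\times)$, then converts the cardinalities $\abs{\REmb((B,\lambda),\calO_i)}$ into the conjugacy-class counts $n((B,\lambda),\calO_i,\calO_i^\times)$ weighted by $w_i/w(B)$, yielding \eqref{eq:type-step3}.

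What remains is purely the final substitution. First I would invoke \eqref{eq:trf-roe} to rewrite the inner sum $\sum_{i=1}^h n((B,\lambda),\calO_i,\calO_i^\times)$ as the product $h(B)\prod_\grp n((B_\grp,\lambda),\calO_\grp,\calO_\grp^\times)$ over all finite places $\grp$ of $F$. Next I would apply Lemma~\ref{lem:n1-pnmid-d}, which says that for $(B,\lambda)\in\scrB$ one has $n((B_\grp,\lambda),\calO_\grp,\calO_\grp^\times)=1$ whenever $\grp\nmid\grd(\calO)$; this truncates the (a priori infinite, but finite by Remark~\ref{rem:finite-prod}) product to the finitely many $\grp\mid\grd(\calO)$. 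Substituting both facts into \eqref{eq:type-step3} gives
\begin{equation*}
    t(\calO)=\frac{\Mass(\calO)}{\abs{\Picent(\calO)}}+\frac{1}{2\abs{\Picent(\calO)}}\sum_{(B,\lambda)\in\scrB}\frac{h(B)}{w(B)}\prod_{\grp\mid\grd(\calO)}n((B_\grp,\lambda),\calO_\grp,\calO_\grp^\times),
\end{equation*}
and factoring out $1/\abs{\Picent(\calO)}$ recovers \eqref{eq:Korner-formula} verbatim. The bulleted clarifications in the statement (the Eichler mass formula \eqref{eq:234} for $\Mass(\calO)$, the construction of $\scrB$ in Proposition~\ref{prop:def-B}, and Dedekind's class-number formula for $h(B)$) require no further argument here, being either cited results or already established.

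Strictly speaking, then, Theorem~\ref{thm:Korner} as presented here is a corollary, and the only genuine content deferred is the trace formula \eqref{eq:trf-roe} itself; I expect its proof (Proposition~\ref{prop:trf-res-opt-emb}) to be the real obstacle. The natural strategy there is the adelic one used for the classical optimal-embedding trace formula \cite[Theorem~30.4.7]{voight-quat-book}: parametrize $\bigsqcup_i \REmb((B,\lambda),\calO_i)/\calO_i^\times$ by a double-coset space built from $\whK^\times\backslash\whD^\times/\wcO^\times$ intersected with the constraint that $\lambda$ land in the adelic normalizer $\calN(\wcO)$, then fiber this over $\Cl(\calO)$ on one side and over $\Pic(B)\times\prod_\grp\REmb((B_\grp,\lambda),\calO_\grp)/\calO_\grp^\times$ on the other. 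The subtlety compared with the optimal-embedding case is bookkeeping the normalizer condition $\varphi(\lambda)\in\calN(\grO)$ compatibly across completions — i.e.\ checking that the local-global passage is unobstructed once one works with restricted rather than plain optimal embeddings — but since $\lambda$ is a single fixed element this should go through by the same descent argument, with the finiteness of the local product handled as in Remark~\ref{rem:finite-prod}.
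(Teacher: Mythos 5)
Your proposal is correct and follows essentially the same route as the paper: the derivation through \eqref{eq:type-step3} is exactly the paper's, and the theorem is then obtained by substituting the trace formula \eqref{eq:trf-roe} together with Lemma~\ref{lem:n1-pnmid-d} to truncate the product to $\grp\mid\grd(\calO)$. Your anticipated adelic proof of Proposition~\ref{prop:trf-res-opt-emb} also matches the paper's actual argument.
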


Finally, we embark on our main goal of deriving a \emph{spinor type number formula} for $t_\sg(\calO)\coloneqq \abs{\Tp_\sg(\calO)}$, where as in Definition~\ref{defn:spinor-genus}, $\Tp_\sg(\calO)\coloneqq \{[\calO']\in \Tp(\calO)\mid [\calO']\subseteq [\calO]_\sg\}$ denotes the subset of $\Tp(\calO)$ consisting of all types of orders belonging to the same spinor genus of $\calO$.  Recall from \eqref{eq:Cl-to-Tp} that we have a canonical surjective map $\xi: \Cl(\calO)\twoheadrightarrow \Tp(\calO)$ whose fibers are identified with the $\Picent(\calO)$-orbits of the right multiplicative action $\mu: \Cl(
\calO)\times \Picent(\calO)\to \Cl(\calO)$ in \eqref{eq:Picent-action-defn}.  Indeed, it is this group action that allows us to identify $\Tp(\calO)$ with $\Cl(\calO)/\Picent(\calO)$ so that its cardinality  can be computed via Burnside's lemma.  Naturally, if we define
\begin{equation}\label{eq:defn-cl-sg}
    \Cl_\sg(\calO)\coloneqq \xi^{-1}(\Tp_\sg(\calO))=\{[I]\in \Cl(\calO)\mid [\calO_l(I)]\subseteq [\calO]_\sg\}, 
\end{equation}
then $\Cl_\sg(\calO)$ is stable under the $\Picent(\calO)$-action, and $\Tp_\sg(\calO)$ can be canonically identified with $\Cl_\sg(\calO)/\Picent(\calO)$. We apply Burnside's lemma again to obtain 
\begin{equation}
      t_\sg(\calO)=\abs{\Cl_\sg(\calO)/ \Picent(\calO)}=\frac{1}{\abs{\Picent(\calO)}}\sum_{[P]\in \Picent(\calO)} \abs{\Cl_\sg(\calO)^{[P]}}.
\end{equation}

At the beginning of this section,  we have fixed a complete set $\{I_i\subset D\mid 1\leq i\leq h\} $  of representatives of $\Cl(\calO)$. Relabeling if necessary, we assume that the first $h_\sg\coloneqq \abs{\Cl_\sg(\calO)}$ of them $I_1, \cdots, I_{h_\sg}$ form a complete set of representatives of $\Cl_\sg(\calO)$.
Now to compute $t_\sg(\calO)$, we perform almost the same calculation as before, only replacing $\Cl(\calO)$ by $\Cl_\sg(\calO)$ (and respectively, replacing the summation $\sum_{i=1}^h$ by $\sum_{i=1}^{h_\sg}$) whenever applicable.  For example, analogous to \eqref{eq:fix-Z},   the number of fixed points of each $[P]\in \Picent(\calO)$ in $\Cl_\sg(\calO)$ is given by 
\begin{equation}
      \abs{\Cl_\sg(\calO)^{[P]}}=\frac{1}{2}\sum_{i=1}^{h_\sg} \frac{\abs{Z_P(\calO_i)}}{w_i}.
\end{equation}
Correspondingly, \eqref{eq:type-step1}  changes into 
\begin{equation}
  t_\sg(\calO)=  \frac{\Mass_\sg(\calO)}{\abs{\Picent(\calO)}}+\frac{1}{2\abs{\Picent(\calO)}}\,\sum_{[P]\in \Picent(\calO)} \sum_{i=1}^{h_\sg}\frac{\abs{Z_P^\circ(\calO_i)}}{w_i},
\end{equation}
where $\Mass_\sg(\calO)$ is defined by the following formula and called \emph{the mass of $\Cl_\sg(\calO)$}:
\begin{equation}\label{eq:defn-mass-sg}
     \Mass_\sg(\calO)\coloneqq \sum_{i=1}^{h_\sg}\frac{1}{w_i}=\sum_{[I]\in \Cl_\sg(\calO)}\frac{1}{[\calO_l(I)^\times: O_F^\times]}.
\end{equation}
  Observe that both partitions in \eqref{eq:part-ZP} and \eqref{eq:decomp-REmb} hold true for every fixed $i$ between $1$ and $h$, and so is the bijection \eqref{eq:bij-gen-BZ}.  Continuing the same calculation as before, we obtain a preliminary version of the spinor type number formula that applies to any arbitrary $\calO$.
\begin{prop}\label{prop:spinor-typ-prelim}
      The spinor type number $t_\sg(\calO)$ of any  $O_F$-order $\calO$ in a totally definite quaternion $F$-algebra $D$  is given by
  \begin{equation}\label{eq:spinor-typ-prelim}
        t_\sg(\calO)    
    =\frac{\Mass_\sg(\calO)}{\abs{\Picent(\calO)}}+\frac{1}{2\abs{\Picent(\calO)}}\, \sum_{(B, \lambda)\in \scrB} \frac{1}{w(B)} \sum_{i=1}^{h_\sg} n((B, \lambda), \calO_i, \calO_i^\times).
  \end{equation}    
\end{prop}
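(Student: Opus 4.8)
The plan is to reproduce, step for step, the computation that yielded K\"orner's formula \eqref{eq:Korner-formula}, but carried out on the $\Picent(\calO)$-stable subset $\Cl_\sg(\calO)$ instead of $\Cl(\calO)$, and with the truncated range $1\le i\le h_\sg$ in place of $1\le i\le h$ throughout. Concretely, I would start from the identity
\[
t_\sg(\calO)=\frac{\Mass_\sg(\calO)}{\abs{\Picent(\calO)}}+\frac{1}{2\abs{\Picent(\calO)}}\,\sum_{[P]\in \Picent(\calO)} \sum_{i=1}^{h_\sg}\frac{\abs{Z_P^\circ(\calO_i)}}{w_i}
\]
recorded just before the statement, which rests on Burnside's lemma for the $\Picent(\calO)$-action on $\Cl_\sg(\calO)$, on the fixed-point count $\abs{\Cl_\sg(\calO)^{[P]}}=\frac12\sum_{i=1}^{h_\sg}\abs{Z_P(\calO_i)}/w_i$, on Lemma~\ref{lem:central-Z} (which holds for each single index $i$ and hence cleanly extracts the mass term $\Mass_\sg(\calO)$ of \eqref{eq:defn-mass-sg}), and on the fact that $\Cl_\sg(\calO)$ is genuinely stable under right multiplication by $\Picent(\calO)$.

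Next I would plug in, for each fixed $i$ with $1\le i\le h_\sg$, the partition \eqref{eq:part-ZP} of $Z_P^\circ(\calO_i)$ by isomorphism classes of pointed CM $O_F$-orders together with the bijection \eqref{eq:bij-gen-BZ}, so that $\abs{Z_P^\circ(\calO_i)}=\sum_{(B_0,\lambda_0)\in\scrB_0}\abs{\REmb_P((B_0,\lambda_0),\calO_i)}$; as noted immediately before the statement, both of these hold index-by-index, so no information about the full range $1\le i\le h$ is needed. Then I would interchange the three summations, collapse the sum over $[P]$ using the decomposition \eqref{eq:decomp-REmb} into $\sum_{(B_0,\lambda_0)\in\scrB_0}\sum_{i=1}^{h_\sg}\abs{\REmb((B_0,\lambda_0),\calO_i)}$, and enlarge the outer index set from $\scrB_0$ to $\scrB$: this is harmless because, by \eqref{eq:char-B0}, every $(B,\lambda)\in\scrB\smallsetminus\scrB_0$ has $\REmb((B,\lambda),\calO_i)=\emptyset$ for \emph{all} $i$, a fortiori for $i\le h_\sg$. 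Finally the orbit-length identity \eqref{eq:orbits-res-opt}, namely $\abs{\REmb((B,\lambda),\calO_i)}=n((B,\lambda),\calO_i,\calO_i^\times)\,w_i/w(B)$, converts the sum into $\sum_{(B,\lambda)\in\scrB}\frac{1}{w(B)}\sum_{i=1}^{h_\sg}n((B,\lambda),\calO_i,\calO_i^\times)$, which is exactly \eqref{eq:spinor-typ-prelim}.

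I do not expect a genuine obstacle: this proposition is a bookkeeping corollary of Section~\ref{sec:Picent-for-tot-def-quat-alg}, where every tool used to derive K\"orner's formula was deliberately phrased so as to apply to one index $i$ at a time. The single point that warrants explicit mention is why Burnside's lemma may be invoked on the truncated set, i.e.\ why $\Cl_\sg(\calO)$ is closed under the right $\Picent(\calO)$-action: for any invertible two-sided $\calO$-ideal $P$ one has $\calO_l(IP)=\calO_l(I)$ (check locally, where $P_\grp$ is generated by an element of $\calN(\calO_\grp)$), so right multiplication by $P$ preserves left orders and therefore types, and in particular spinor genera; hence $\Tp_\sg(\calO)$ is canonically identified with $\Cl_\sg(\calO)/\Picent(\calO)$ and the counting argument goes through verbatim.
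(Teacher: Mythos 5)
Your proposal is correct and follows essentially the same route as the paper: the paper likewise derives \eqref{eq:spinor-typ-prelim} by noting that $\Cl_\sg(\calO)=\xi^{-1}(\Tp_\sg(\calO))$ is a union of $\Picent(\calO)$-orbits (hence stable), applying Burnside's lemma to $\Cl_\sg(\calO)$, and rerunning the K\"orner computation with $\sum_{i=1}^h$ replaced by $\sum_{i=1}^{h_\sg}$, using that \eqref{eq:part-ZP}, \eqref{eq:decomp-REmb}, \eqref{eq:bij-gen-BZ} and \eqref{eq:orbits-res-opt} all hold index-by-index. Your explicit check that right multiplication by a two-sided ideal preserves left orders is exactly the content behind the paper's identification of the fibers of $\xi$ with the $\Picent(\calO)$-orbits, so nothing is missing.
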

Admittedly, this formula is not effective since we generally do not know  how to compute the partial sum $\sum_{i=1}^{h_\sg} n((B, \lambda), \calO_i, \calO_i^\times)$.  To obtain an effective formula in the same style as K\"orner's type number formula, we need a refinement of the trace formula \eqref{eq:trf-roe}. 
Such a refinement will be worked out in Section~\ref{sec:spinor-trace-formula} under the additional assumption that $\calO$ is residually unramified.   This is also precisely the reason why we need the residually-unramified assumption in our final form of the spinor type number formula in Theorem~\ref{thm:spinor-type-number-formula-intro}.

\section{Spinor trace formula for restricted optimal embeddings}\label{sec:spinor-trace-formula}

In this section we produce a refinement of the trace formula \eqref{eq:trf-roe}  called \emph{the spinor trace formula for restricted optimal embeddings}. It is modeled after the spinor trace formula for optimal embeddings in \cite[Proposition~4.3]{Xue-Yu-Selec-2024} developed by Chia-Fu Yu and the second named author, which  in turn is a generalization of the Vign\'eras-Voight formula \cite[Theorem~31.1.7(c) and Corollary~31.1.10]{voight-quat-book} \cite[Proposition 4.1]{Xue-Yu-Selec-2024} for Eichler orders in quaternion algebras that satisfy the Eichler Condition (cf.~Remark~\ref{rem:EC}). This refined trace formula allows us to evaluate the partial sum $\sum_{i=1}^{h_\sg} n((B, \lambda), \calO_i, \calO_i^\times)$ in \eqref{eq:spinor-typ-prelim} and thus  yields an effective spinor type number formula.  Since we are considering optimal embeddings of the fixed CM $O_F$-order $B$ into quaternion $O_F$-orders $\calO_i$'s belonging to the same spinor genus, the theory of \emph{optimal spinor  selectivity} \cite{M.Arenas-et.al-opt-embed-trees-JNT2018}\cite[Chapter~31]{voight-quat-book}\cite{Xue-Yu-Selec-2024}  naturally arises and plays a critical role. We shall  adapt it to 
the current setting of restricted optimal embeddings.  

Throughout this section $D$ denotes an arbitrary quaternion algebra over a number field $F$. We will only specialize to the residually-unramified case in the latter half of this section, so for the moment $\scrG$ denotes an arbitrary genus of $O_F$-orders in $D$, and $\calO\in \scrG$ is a  member of $\scrG$.  We make heavy use of the adelic language. Let $\wbZ\coloneqq \prod_p\Z_p$ be the profinite completion of $\Z$. If $M$ is a finitely generated $\Z$-module or a finite dimensional $\Q$-vector space, we put $\wh{M}\coloneqq M\otimes_\Z \wbZ$. For example,  $\whD$ denotes the ring of finite adeles of $D$, and $\wcO\coloneqq \prod_\grp\calO_\grp$, where $\grp$ runs over all finite places of $F$.  As usual, $\calO_\grp$ (resp.~$M_\grp$) denotes the $\grp$-adic completion of $\calO$ (resp.~$M$). We write $\calN(\wcO)$ for the normalizer group of $\wcO$ inside $\whD^\times$.

The trace formula  \eqref{eq:trf-roe}  for restricted optimal embeddings was only  
used by K\"orner implicitly in \cite{korner:1987}, so we state it more precisely and sketch its proof. It is analogous to the classical trace formula \eqref{eq:classic-trf-opt} for optimal embeddings and takes the same form.  

\begin{prop}[Trace formula for restricted optimal embeddings]\label{prop:trf-res-opt-emb}
    Let $K/F$ be a quadratic field extension that is $F$-embeddable into $D$, and $B$ be an $O_F$-order in $K$ with a fixed point $\lambda\in B\smallsetminus O_F$. Then 
       \begin{equation}\label{eq:restricted-trace-formula}
        \sum_{[I]\in\Cl(\calO)} n((B,\lambda),\calO_l(I),\calO_l(I)^\times)=h(B)\prod_\grp n((B_\grp,\lambda),\calO_\grp,\calO_\grp^\times),
    \end{equation}
    where the product runs over all finite places $\grp$ of $F$.
\end{prop}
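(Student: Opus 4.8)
The plan is to deduce the global trace formula \eqref{eq:restricted-trace-formula} from its adelic/local–global counterpart in the standard way, following the template of the classical trace formula for optimal embeddings \cite[Theorem~30.4.7]{voight-quat-book}, but carrying the distinguished point $\lambda$ through every step. First I would set up the adelic bijection: the ideal class set $\Cl(\calO)$ is identified with the double coset space $\whD^\times/(\wcO^\times\,\whD^{\times}_{\text{(something)}})$; more precisely, writing $\Cl(\calO)\simeq D^\times\backslash \whD^\times/\wcO^\times$ via $I\mapsto [\hat I]$, for a representative $g\in\whD^\times$ with $\hat I=g\wcO$ one has $\calO_l(I)=D\cap g\wcO g^{-1}$, hence $\wh{\calO_l(I)}=g\wcO g^{-1}$. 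The key point, which works exactly as in the optimal case, is that an embedding $\psi\in\Hom_F(K,D)$ is an optimal embedding $\psi\in\Emb(B,\calO_l(I))$ precisely when $\psi(\hat B)=\psi(\hat K)\cap g\wcO g^{-1}$, i.e.\ when $g^{-1}\psi(\cdot)g$ is a locally optimal embedding of $\hat B$ into $\wcO$; and $\psi$ is moreover \emph{restricted} — i.e.\ $\psi(\lambda)\in\calN(\calO_l(I))$ — exactly when $g^{-1}\psi(\lambda)g\in\calN(\wcO)$, because $\calN(\calO_l(I))=D^\times\cap g\calN(\wcO)g^{-1}$ (the normalizer of a locally-everywhere-normalizing adele is detected place by place). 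Thus the condition defining $\REmb$ is entirely local, just like the condition defining $\Emb$, and nothing new happens adelically.

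Next I would fix one embedding $\iota\colon K\hookrightarrow D$ (which exists since $K$ is $F$-embeddable into $D$) and use the Skolem–Noether theorem: every $\psi\in\Hom_F(K,D)$ is of the form $x\mapsto \gamma\iota(x)\gamma^{-1}$ for some $\gamma\in D^\times$, unique up to left multiplication by $\iota(K)^\times$. Transporting the computation to the adelic side, $\bigsqcup_{[I]\in\Cl(\calO)}\REmb((B,\lambda),\calO_l(I))/\calO_l(I)^\times$ is in bijection with a double coset space of the shape $\iota(\whK^\times)\backslash \{\,h\in\whD^\times : h^{-1}\wcO h \supseteq \iota(\hat B)\text{ optimally and } h^{-1}\iota(\lambda)h\in\calN(\wcO)\,\}/\,(\text{stabilizer})$, and after the usual orbit-counting bookkeeping the left-hand side of \eqref{eq:restricted-trace-formula} becomes
\begin{equation*}
\bigl|\,\iota(K^\times)\backslash \iota(\whK^\times)\,/\,(\iota(\whK^\times)\cap \text{something})\,\bigr|\cdot\prod_\grp n((B_\grp,\lambda),\calO_\grp,\calO_\grp^\times).
\end{equation*}
The finite-adelic factor $\iota(\whK^\times)$ modulo the local stabilizers $\iota(\hat B^\times)$ (these are exactly the stabilizers since $\iota(\hat B^\times)=\iota(\whK^\times)\cap\calN(\wcO)$-type conditions collapse to $\iota(\hat B^\times)$ under optimality) is precisely $\whK^\times/\hat B^\times$, whose quotient by the global $K^\times$ is the Picard group $\Pic(B)$, of cardinality $h(B)$. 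This produces the factor $h(B)$ and the product over finite places, giving \eqref{eq:restricted-trace-formula}. The product is finite because $\calO_\grp\simeq\Mat_2(O_{F_\grp})$ and $\lambda\in\hat B^\times_\grp$ for all but finitely many $\grp$, so $n((B_\grp,\lambda),\calO_\grp,\calO_\grp^\times)=1$ there by Remark~\ref{rem:res-opt-emb-eq-1} (this is the content of the forthcoming Remark~\ref{rem:finite-prod}).

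The only genuinely new input beyond the classical proof is the verification that the \emph{restricted} condition $\varphi(\lambda)\in\calN(\grO)$ is compatible with all these identifications — that it is a purely local condition and that it passes correctly through Skolem–Noether. Concretely the hard part will be checking the identity $\calN(\calO_l(I))=D^\times\cap \bigl(g\,\calN(\wcO)\,g^{-1}\bigr)$ together with $\calN(\wcO)=\prod_\grp\calN(\calO_\grp)$, and confirming that under $\psi(x)=\gamma\iota(x)\gamma^{-1}$ the element $\psi(\lambda)$ normalizes $\calO_l(I)$ iff the corresponding local components normalize the $\calO_\grp$'s — none of which is deep, but each requires care because $\lambda$ need not be a unit and the normalizer is more subtle than the unit group. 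Once this local-ness is in hand, the argument is a verbatim adaptation of \cite[Theorem~30.4.7]{voight-quat-book} with $\Emb$ replaced by $\REmb$ throughout, which is why the statement takes exactly the same form \eqref{eq:classic-trf-opt} as the classical one. I would present it by citing the optimal-embedding proof for the parts that are unchanged and spelling out only the normalizer bookkeeping.
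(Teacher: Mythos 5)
Your proposal is correct and follows essentially the same route as the paper's own proof: an adelic double-coset count in the style of \cite[Theorem~30.4.7]{voight-quat-book}, using Skolem--Noether to pass between embeddings and elements of $D^\times$ modulo $K^\times$, observing that the restricted condition $\varphi(\lambda)\in\calN(\grO)$ is purely local, and extracting the factor $h(B)\simeq K^\times\backslash\whK^\times/\widehat{B}^\times$ from the fibers of the projection to the local product. The normalizer bookkeeping you flag as the only new input is exactly what the paper's sketch also isolates (its bi-invariance claims for $\wcRE$), so no comparison beyond this is needed.
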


\begin{rem}\label{rem:finite-prod}
The product on the right hand side of \eqref{eq:restricted-trace-formula} is finite because the number $n((B_\grp,\lambda),\calO_\grp,\calO_\grp^\times)=1$ for all but finitely many $\grp$.   Indeed, for almost all $\grp$, we have $\lambda\in B_\grp^\times$ and $\calO_\grp\simeq \Mat_2(O_{F_\grp})$, so $n((B_\grp,\lambda), \calO_\grp, \calO_\grp^\times)=1$ by Remark~\ref{rem:res-opt-emb-eq-1}.
Observe that for each $\grp$ the  number $n((B_\grp, \lambda), \calO_\grp, \calO_\grp^\times)\coloneqq \abs{\REmb((B_\grp, \lambda), \calO_\grp^\times)/\calO_\grp^\times}$  depends only on the fixed genus $\scrG$ (i.e.~on the local isomorphism class of $\calO$ at each $\grp$) and not on the particular choice of $\calO\in \scrG$. Thus if $\scrG$ is clear from the context,  we simply put 
\begin{equation}
    n_\grp(B, \lambda)\coloneqq n((B_\grp, \lambda), \calO_\grp, \calO_\grp^\times).
\end{equation}
Similarly, the left hand side of \eqref{eq:restricted-trace-formula} also depends essentially only on the genus $\scrG$  in the following sense. From the surjectivity of $\xi: \Cl(\calO)\to \Tp(\calO)$ in \eqref{eq:Cl-to-Tp}, every pair of orders in $\scrG$ is \emph{linked}. In other words, given another order $\calO'\in \scrG$, there exists an invertible fractional right $\calO'$-ideal $L'$ with $\calO_l(L')=\calO$. If we write $\RIdl(\calO)$ for the set of invertible fractional right ideals of $\calO$, then from \cite[\S Lemma~I.4.9]{vigneras} the map $I\mapsto IL'$ induces a bijection $\RIdl(\calO)\to \RIdl(\calO')$ that preserves the  ideal classes and the attached left orders (i.e.~$\calO_l(I)=\calO_l(IL')$ for every $I\in \RIdl(\calO)$). Therefore, when computing a partial sum of the form $\sum_{[I]\in \grS} n((B,\lambda),\calO_l(I),\calO_l(I)^\times)$ for some subset $\grS\subseteq \Cl(\calO)$ (such as the ones in the spinor type number formula \eqref{eq:spinor-typ-prelim} with $\grS=\Cl_\sg(\calO)$), we may replace the base order $\calO$ by a suitable $\calO'\in \scrG$ and transform $\grS$ to some $\grS'\subseteq \Cl(\calO')$ accordingly without affecting the value of each term of the summation. 
\end{rem}

\begin{proof}[Proof of Proposition~\ref{prop:trf-res-opt-emb}]
    The proof of Proposition~\ref{prop:trf-res-opt-emb} follows exactly the same line of argument for the classical trace formula given in \cite[Theorem~30.4.7]{voight-quat-book} or \cite[Theorem~III.5.11]{vigneras}.  Fix an $F$-embedding $\varphi_0:K\hookrightarrow D$ and identify $K$ with its image $\varphi_0(K)\subset D$.   Consider the following sets 
\begin{align}
    \calRE((B,\lambda),\calO)\coloneqq
    &\{x\in D^\times\mid K\cap x \calO x^{-1}=B,\ \lambda\in\calN(x \calO x^{-1})\},\label{eq:global-REmb}\\
    \calRE_\grp((B,\lambda),\calO)\coloneqq
    &\{x_\grp\in D_\grp^\times\mid K_\grp\cap x_\grp \calO_\grp x_\grp^{-1}=B_\grp,\ \lambda\in\calN(x_\grp \calO_\grp x_\grp^{-1})\},\label{eq:local-REmb}\\
    \wcRE((B,\lambda),\calO)\coloneqq
    &\{\hat{x}=(x_\grp)_\grp\in \whD^\times\mid \whK\cap \hat{x} \wcO \hat{x}^{-1}=\whB,\ \lambda\in\calN(\hat{x} \wcO \hat{x}^{-1})\},\label{eq:Adelic-REmb}\\
    \wcE(B, \calO)\coloneqq &\{\hat{x}=(x_\grp)_\grp\in \whD^\times\mid \whK\cap \hat{x} \wcO \hat{x}^{-1}=\whB\},\label{eq:Adelic-Emb}
\end{align}
where $\calRE_\grp((B,\lambda),\calO)$ is defined for every finite place $\grp$ of $F$. 
By definition, the set $\calRE((B,\lambda),\calO)$ is left translation invariant by $K^\times$ and right translation invariant by the normalizer group $\calN(\calO)\subset D^\times$.
Similar to \cite[Lemma~30.3.8]{voight-quat-book}, we have a right $\calO^\times$-equivariant bijection 
\begin{equation}
   K^\times\backslash \calRE((B,\lambda),\calO) \to \REmb((B, \lambda), \calO), \qquad K^\times x\mapsto x^{-1}\varphi_0 x,
\end{equation}
from which we deduce that
\begin{equation}
    n((B, \lambda), \calO, \calO^\times)=\abs{\REmb((B, \lambda), \calO)/\calO^\times}=\abs{K^\times\backslash \calRE((B,\lambda),\calO)/\calO^\times}.
\end{equation}
By the same token, at each finite place  $\grp$ we have 
\begin{equation}\label{eq:double-coset-loc-ropt}
    n_\grp(B, \lambda)=\abs{\REmb((B_\grp, \lambda), \calO_\grp)/\calO_\grp^\times}=\abs{K_\grp^\times\backslash \calRE_\grp((B,\lambda),\calO)/\calO_\grp^\times}.
\end{equation}

In the adelic case, $\wcE\coloneqq \wcE(B, \calO)$ is left translation invariant by $\whK^\times$ and right translation invariant by the normalizer group $\calN(\wcO)$, and $\wcRE\coloneqq \wcRE((B,\lambda),\calO)$ forms a $(\whK^\times, \calN(\wcO))$-bi-invariant subset of $\wcE$.  The classical trace formula \eqref{eq:classic-trf-opt} is obtained by evaluating the cardinality of the double coset space $K^\times\backslash\wcE/\wcO^\times$ in two different ways.  We take the same approach and evaluate the cardinality of
$K^\times\backslash\wcRE/\wcO^\times$
in two different ways to obtain the trace formula for restricted optimal embeddings. 

Firstly, observe that the projection map 
\begin{equation}
 \Phi_{\wcRE}:   K^\times\backslash\wcRE/\wcO^\times\to \whK^\times\backslash\wcRE/\wcO^\times
\end{equation}
is just the restriction of the map $\Phi_{\wcE}: K^\times\backslash\wcE/\wcO^\times\to \whK^\times\backslash\wcE/\wcO^\times$ considered in \cite[(30.4.9)]{voight-quat-book}.  Since $\wcRE$ is $(\whK^\times, \wcO^\times)$-bi-invariant,  the fiber of $\Phi_{\wcRE}$ over each $ [\hat{x}]\in \whK^\times\backslash\wcRE/\wcO^\times$ coincides with  $\Phi_{\wcE}^{-1}([\hat{x}])$, which has been shown to have cardinality $h(B)$ in the proof of \cite[Theorem~30.4.7]{voight-quat-book}. On the other hand,  there is a canonical bijective map 
\[   \whK^\times\backslash \wcRE /\wcO^\times
            \xrightarrow{\thicksim} \prod_\grp K_\grp^\times\backslash \calRE_\grp((B,\lambda),\calO) /\calO_\grp^\times, \]
where the product runs over all finite places $\grp$ of $F$. Indeed, for all but finitely many $\grp$, the following conditions $\varphi_0^{-1}(\calO_\grp)=O_{K_\grp}=B_\grp$, $\lambda\in B_\grp^\times$, and $\calO_\grp\simeq \Mat_2(O_{F_\grp})$ hold simultaneously, in which case the double coset space $K_\grp^\times\backslash \calRE_\grp((B,\lambda),\calO) /\calO_\grp^\times$ is a singleton represented by $1\in D_\grp^\times$ according to Remark~\ref{rem:res-opt-emb-eq-1}. 
Combining the above discussion with \eqref{eq:double-coset-loc-ropt}, we obtain
\begin{equation} \label{eq:RHS-rtf} \abs{K^\times\backslash\wcRE/\wcO^\times}=h(B)\prod_{\grp}\abs{K_\grp^\times\backslash \calRE_\grp((B,\lambda),\calO_\grp) /\calO_\grp^\times}=h(B)\prod_{\grp}n_\grp(B, \lambda). 
\end{equation}

Secondly,  the right ideal class set $\Cl(\calO)$ can be described adelically as follows:
   \begin{equation}\label{eq:ideal-class-set}
         \Cl(\calO) \simeq D^\times\backslash \whD^\times /\wcO^\times.
     \end{equation}
Recall from Section~\ref{sec:Picent-for-tot-def-quat-alg} that we have fixed a complete set $\{I_i\subset D\mid 1\leq i\leq h\}$ of representatives of $\Cl(\calO)$ with $h\coloneqq h(\calO)$.  For each $1\leq i\leq h$, we choose $\hat{g}_i\in \whD^\times$ such that $\whI_i=\hat{g}_i\wcO$.  Then the profinite completion $\wcO_i$ of the left order $\calO_i\coloneqq \calO_l(I_i)$ of $I_i$ is given by $ \hat{g}_i\wcO \hat{g}_i^{-1}$. From \eqref{eq:ideal-class-set}, the group $\whD^\times$ admits a decomposition $\whD^\times=\bigsqcup_{i=1}^h D^\times \hat{g}_i \wcO^\times$, which induces a decomposition of the set $\wcRE$ as follows:
\begin{equation}\label{eq:decomp-REhat}
    \wcRE=\bigsqcup_{i=1}^h \wcRE_i, \qquad\text{where}\quad \wcRE_i\coloneqq \wcRE\cap (D^\times \hat{g}_i \wcO^\times).
\end{equation}
Now it is a straightforward to check  that 
\begin{equation}
    \wcRE_i \cdot \hat{g}_i^{-1}=  \calRE((B,\lambda),\calO_i)\cdot\wcO_i^\times. 
\end{equation}
We then compute 
\begin{equation}\label{eq:adelic-to-global}
 \begin{split}
     K^\times\backslash\wcRE_i/\wcO^\times &\simeq K^\times\backslash \left(\wcRE_i\cdot \hat{g}_i^{-1}\right)/\hat{g}_i \wcO^\times \hat{g}_i^{-1}\\ &=K^\times\backslash \left(\calRE((B,\lambda),\calO_i)\cdot\wcO_i^\times\right)/\wcO_i^\times \\ &\simeq K^\times\backslash \calRE((B,\lambda),\calO_i)/\calO_i^\times\simeq \REmb((B, \lambda), \calO_i)/\calO_i^\times. 
 \end{split}  
\end{equation}
Combining \eqref{eq:decomp-REhat} and \eqref{eq:adelic-to-global}, we obtain 
\begin{equation}\label{eq:LHS-rtf}
  \begin{split}
      \abs{K^\times\backslash\wcRE/\wcO^\times}&=\sum_{i=1}^h \abs{K^\times\backslash\wcRE_i/\wcO^\times}=\sum_{i=1}^h n((B, \lambda), \calO_i, \calO_i^\times)\\
      &= \sum_{[I]\in\Cl(\calO)} n((B,\lambda),\calO_l(I),\calO_l(I)^\times).
  \end{split}  
\end{equation}
We conclude the proof of the proposition by combining \eqref{eq:RHS-rtf} with \eqref{eq:LHS-rtf}. 
\end{proof}

Clearly, if $\REmb((B, \lambda), \calO)\neq \emptyset$, then necessarily, $\REmb((B_\grp, \lambda), \calO_\grp)\neq \emptyset$ for every finite place $\grp$ of $F$. 
An immediate corollary of Proposition~\ref{prop:trf-res-opt-emb} is the following converse result analogous to \cite[Corollary~30.4.18]{voight-quat-book}.

\begin{cor}\label{cor:local-imply-exists-global}
    Keep the assumption of Proposition~\ref{prop:trf-res-opt-emb}. If $\REmb((B_\grp, \lambda), \calO_\grp)\neq \emptyset$ for every finite place $\grp$ of $F$, then there exists an order $\calO_0\in \scrG$ such that $\REmb((B, \lambda), \calO_0)\neq \emptyset$.
\end{cor}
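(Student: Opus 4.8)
The plan is to read the conclusion straight off the trace formula for restricted optimal embeddings proved in Proposition~\ref{prop:trf-res-opt-emb}, exactly in the way \cite[Corollary~30.4.18]{voight-quat-book} is deduced from the classical trace formula \eqref{eq:classic-trf-opt}. Fix any member $\calO\in\scrG$ and apply \eqref{eq:restricted-trace-formula} to the pointed $O_F$-order $(B,\lambda)$ and to this $\calO$; the hypothesis is precisely the input that makes the right-hand side positive.

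First I would check that the right-hand side $h(B)\prod_\grp n_\grp(B,\lambda)$ of \eqref{eq:restricted-trace-formula} is a positive integer. By Remark~\ref{rem:finite-prod} we have $n_\grp(B,\lambda)=1$ for all but finitely many finite places $\grp$, so $\prod_\grp n_\grp(B,\lambda)$ is a genuine finite product of positive integers; moreover $n_\grp(B,\lambda)$ depends only on $\scrG$, so the right-hand side is meaningful before any particular order has been singled out. By hypothesis $\REmb((B_\grp,\lambda),\calO_\grp)\neq\emptyset$, hence $n_\grp(B,\lambda)=\abs{\REmb((B_\grp,\lambda),\calO_\grp)/\calO_\grp^\times}\geq 1$ for every $\grp$, so that product is $\geq 1$; together with $h(B)=\abs{\Pic(B)}\geq 1$ this gives $h(B)\prod_\grp n_\grp(B,\lambda)\geq 1$.

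Consequently the left-hand side $\sum_{[I]\in\Cl(\calO)} n((B,\lambda),\calO_l(I),\calO_l(I)^\times)$ is a sum of non-negative integers whose value is $\geq 1$, so at least one summand is nonzero: there is $[I]\in\Cl(\calO)$ with $\REmb((B,\lambda),\calO_l(I))\neq\emptyset$. Put $\calO_0\coloneqq\calO_l(I)$. Since $I$ is a locally principal right $\calO$-ideal, at each finite place $\grp$ we have $I_\grp=x_\grp\calO_\grp$ for some $x_\grp\in D_\grp^\times$, whence $(\calO_0)_\grp=x_\grp\calO_\grp x_\grp^{-1}$; thus $\calO_0$ lies in the genus $\scrG(\calO)=\scrG$ and satisfies $\REmb((B,\lambda),\calO_0)\neq\emptyset$, which is what was claimed.

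There is essentially no obstacle in this argument; the only matters requiring a moment's care are bookkeeping ones, both already handled by Remark~\ref{rem:finite-prod}: that the product over $\grp$ is a genuine finite integer, and that each $n_\grp(B,\lambda)$ depends only on $\scrG$, so that invoking \eqref{eq:restricted-trace-formula} is legitimate for any chosen member of $\scrG$. (That the hypothesis is \emph{necessary} is trivial: a global restricted optimal embedding restricts to one at every $\grp$.)
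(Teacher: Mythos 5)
Your proof is correct and is precisely the argument the paper intends: the corollary is stated as an immediate consequence of the trace formula \eqref{eq:restricted-trace-formula}, read off by noting that the right-hand side is a positive integer under the hypothesis, so some summand on the left is nonzero and the corresponding left order $\calO_l(I)$ lies in $\scrG$. The bookkeeping points you flag (finiteness of the product and genus-only dependence of $n_\grp(B,\lambda)$) are exactly those covered by Remark~\ref{rem:finite-prod}.
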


\begin{rem}
    Let $\grp$ be a finite place of $F$ coprime to the reduced discriminant $\grd(\calO)$ so that $\calO_\grp\simeq \Mat_2(O_{F_\grp})$. In Remark~\ref{rem:res-opt-emb-eq-1}, we have seen that if $\lambda\in F_\grp^\times B_\grp^\times$, then $n_\grp(B, \lambda)=1$, so particularly, $\REmb((B_\grp, \lambda), \calO_\grp)\neq \emptyset$. Conversely, we claim that the condition $\lambda\in F_\grp^\times B_\grp^\times$ is also necessary for the non-emptiness of $\REmb((B_\grp, \lambda), \calO_\grp)$ in this case. Indeed, given $\varphi_\grp \in  \REmb((B_\grp, \lambda), \calO_\grp)$, we have $\varphi_\grp(\lambda)\in \calN(\calO_\grp)=F_\grp^\times\calO_\grp^\times$ since  $\calO_\grp\simeq \Mat_2(O_{F_\grp})$. Therefore, there exists $a_\grp\in F_\grp^\times$ such that $a_\grp^{-1}\lambda\in \varphi_\grp^{-1}(\calO_\grp^\times)=B_\grp^\times$, from which the claim follows. 
\end{rem}

Recall that the main goal of this section is to develop a refinement of the trace formula \eqref{eq:restricted-trace-formula} that enables us to effectively compute the spinor type number. If $n_\grp(B, \lambda)=0$ for some $\grp$, then every term on the left hand side of \eqref{eq:restricted-trace-formula} is 0.  Without loss of generality,  we  make the following assumption for the remainder of this section (unless explicitly specified otherwise). 
\begin{assm}\label{assm:exist-local-everywhere}
    Let $K/F$ and $(B, \lambda)$ be as in Proposition~\ref{prop:trf-res-opt-emb}. We further assume that $\REmb((B_\grp, \lambda), \calO_\grp)\neq \emptyset$ for every finite place $\grp$ of $F$. 
\end{assm}

In the spirit of the Hasse–Brauer–Noether–Albert theorem \cite[Theorem~III.3.8]{vigneras}, one naturally asks whether Assumption~\ref{assm:exist-local-everywhere} implies that $\REmb((B, \lambda), \calO)\neq \emptyset$ for every $O_F$-order $\calO\in \scrG$.  If the term ``restricted optimal embedding" is replaced by ``optimal embeddings" (resp.~by ``embeddings") everywhere in the preceding question,  we immediately arrive  at the question of \emph{optimal selectivity} (resp.~\emph{selectivity}). The study of this problem is initiated by Chevalley \cite{Chevalley-matrices-1936}, and  modern day research on this topic is heavily influenced by the pioneering work of Chinburg and Friedman \cite{Chinburg-Friedman-1999} who coined the term \emph{selectivity}. Many subsequent works have been carried out since then. 
See
\cite[\S31.7.7]{voight-quat-book} for a brief historical note on the
contributions of Arenas-Carmona 
\cite{Arenas-Carmona-Spinor-CField-2003, Arenas-Carmona-2013,
  Arenas-Carmona-cyclic-orders-2012, Arenas-Carmona-Max-Sel-JNT-2012, M.Arenas-et.al-opt-embed-trees-JNT2018},  Chan-Xu \cite{Chan-Xu-Rep-Spinor-genera-2004},
Guo-Qin \cite{Guo-Qin-embedding-Eichler-JNT2004},  Linowitz
\cite{Linowitz-Selectivity-JNT2012}, Maclachlan 
\cite{Maclachlan-selectivity-JNT2008}, 
and many others.   Many of the techniques to be employed here have already been  developed by Arenas et al.~\cite{M.Arenas-et.al-opt-embed-trees-JNT2018} and  Voight \cite[\S31]{voight-quat-book}.  We shall closely follow  the exposition of \cite{Xue-Yu-Selec-2024} and adapt it to restricted optimal embeddings.  As it turns out,  in the totally definite case,  it does not make sense to expect $\REmb((B, \lambda), \calO)\neq \emptyset$ for every $\calO\in \scrG$, as demonstrated by taking $(B, \lambda)=(\Z[\sqrt{-1}], \sqrt{-1})$ in \cite[Example~2.1]{Xue-Yu-Selec-2024}. Following  \cite[Remark, p.~99]{M.Arenas-et.al-opt-embed-trees-JNT2018}, we reformulate the question in terms of \emph{spinor genus}. 

Recall from Definition~\ref{defn:spinor-genus} that the genus $\scrG$ is naturally divided into spinor genera, the spinor genus of $\calO$ is denoted by $[\calO]_\sg$, and the set of all spinor genera within $\scrG$ is denoted by $\SG(\scrG)$. Keeping the Assumption~\ref{assm:exist-local-everywhere}, for each spinor genus $[\calO]_\sg\in \SG(\scrG)$  we ask   whether there exists an order $\calO'\in [\calO]_\sg$ such that $\REmb((B, \lambda), \calO')\neq \emptyset$.  The answer to this question shall be encoded in the symbol $\Delta^{\res}((B, \lambda), \calO)$,
which mimics the \emph{optimal spinor selectivity symbol} $\Delta(B, \calO)$ in \cite[(2.1)]{Xue-Yu-Selec-2024}  recalled below
\begin{align}
  \Delta^{\res}((B,\lambda), \calO)&\coloneqq
  \begin{cases}
    1 \qquad &\text{if } \exists\, \calO'\in [\calO]_\sg\text{ such that }\REmb((B,\lambda), \calO')\neq \emptyset, \\
    0 \qquad &\text{otherwise};
  \end{cases}\\
  \Delta(B, \calO)&\coloneqq
  \begin{cases}
    1 \qquad &\text{if } \exists\, \calO'\in [\calO]_\sg\text{ such that }\Emb(B, \calO')\neq \emptyset, \\
    0 \qquad &\text{otherwise}.
  \end{cases} \label{eq:defn-Delta}
  \end{align}
As mentioned in Remark~\ref{rem:EC}, if  $D$ satisfies the Eichler Condition, then   each spinor genus consists of exactly one type by \cite[Proposition 1.1]{Brzezinski-Spinor-Class-gp-1983}.   In this case $\Delta^{\res}((B,\lambda), \calO)=1$ if and only if $\REmb((B,\lambda),  \calO)\neq \emptyset$, and the newly  formulated question in terms of spinor genus is equivalent to the old one.  Note that the symbol  $\Delta^{\res}((B,\lambda), \calO)$ is defined even if Assumption~\ref{assm:exist-local-everywhere} fails, in which case it takes value 0. Now we adapt the definition of \emph{optimal spinor selectivity} 
given by Voight in \cite[Appendix~A]{xue-yu:spinor-class-no} to define \emph{optimal spinor selectivity in the restricted sense}, which does not require Assumption~\ref{assm:exist-local-everywhere}.

\begin{defn}\label{defn:oss}
We say the genus $\scrG$ is \emph{spinor genial} for the pointed $O_F$-order $(B, \lambda)$ if $\Delta^{\res}((B,\lambda), \calO)$ takes constant value 
 as $[\calO]_\sg$ ranges within $\SG(\scrG)$ (that is,  
either $\REmb((B,\lambda), \calO)=\emptyset$ for every order $\calO\in \scrG$ or $\Delta^\res((B,\lambda), \calO)=1$ for every  $[\calO]_\sg\in \SG(\scrG)$).  If $\scrG$ is not spinor genial for $(B,\lambda)$, then we say that it is \emph{optimally spinor selective  in the restricted sense} (selective for short) for $(B, \lambda)$.  If $\scrG$ is selective for $(B, \lambda)$, then a
  spinor genus $[\calO]_\sg$ with $\Delta^{\res}((B,\lambda), \calO)=1$
  is said to be \emph{selected} by $(B,\lambda)$.
\end{defn}

Naturally, one seeks for a concrete criterion for when the genus $\scrG$ is selective for $(B, \lambda)$, and furthermore, if it is selective, which spinor genera are selected by $(B, \lambda)$. In general, it is quite technical to detect whether an arbitrary genus $\scrG$ is optimally spinor selective for a given order $B$; see \cite[Theorem~2.6]{peng-xue:select} for a glimpse of its complexity in general. Nevertheless, the situation is much simpler if we assume that  $\calO$ is residually unramified.
Indeed,  it will be shown very soon in Proposition~\ref{prop:ru-equal-sel-symb} that for a genus $\scrG_{\ru}$ of residually unramified orders (see Definition~\ref{defn:res-unr-order}), the equality $\Delta^{\res}((B, \lambda), \calO)=\Delta(B, \calO)$ holds true for every $[\calO]_\sg\in \SG(\scrG_{ru})$ under Assumption~\ref{assm:exist-local-everywhere}.   Therefore, to state a theorem of optimal spinor selectivity in the restricted sense for $\scrG_\ru$,  it is enough to take one for optimal spinor selectivity such as \cite[Theorem~2.5]{xue-yu:spinor-class-no},  replace 
$\Delta(B, \calO)$ by $\Delta^{\res}((B, \lambda), \calO)$ and modify everything else accordingly.  Nevertheless, we still need to introduce quite some notation to prove the desired equality $\Delta^{\res}((B, \lambda), \calO)=\Delta(B, \calO)$. 

For the moment, let us keep the genus $\scrG$ arbitrary and pick $\calO\in \scrG$. 
 Following \cite[\S
 III.4]{vigneras}, we write $F_D^\times$ for the subgroup of
 $F^\times$ consisting of all elements 
that are positive at every infinite place of $F$ ramified in $D$.  The Hasse-Schilling-Maass theorem \cite[Theorem~33.15]{reiner:mo}
\cite[Theorem~III.4.1]{vigneras} implies that $\Nr(D^\times)=F_D^\times$.  
 The pointed  set $\SG(\calO)\coloneqq (\SG(\scrG), [\calO]_\sg)$ of spinor genera in $\scrG$ admits the following adelic description
(cf.~\cite[Propositions~1.2 and 1.8]{Brzezinski-Spinor-Class-gp-1983})
\begin{equation}
  \label{eq:118}
\SG(\calO)\simeq  (D^\times\whD^1)\bsh \whD^\times/\calN(\wcO)\xrightarrow[\simeq]{\Nr}
  F_D^\times\bsh \whF^\times/\Nr(\calN(\wcO)),  
\end{equation}
where the  two double coset spaces are canonically bijective via
the reduced norm map. It follows that $\SG(\calO)$ is naturally
equipped with an abelian group structure, with its distinguished point
$[\calO]_\sg$ as the
identity element.  Since $\Nr(\calN(\wcO))$ is an open subgroup of
$\whF^\times$ containing $(\whF^\times)^2$, the group $\SG(\calO)$ is a
finite elementary $2$-group
\cite[Proposition~3.5]{Linowitz-Selectivity-JNT2012}. Clearly, the
group $\Nr(\calN(\wcO))$ depends only on the genus $\scrG$ and not on
the choice of $\calO$. 

\begin{defn}[{\cite[\S2]{Arenas-Carmona-Spinor-CField-2003},
    \cite[\S3]{Linowitz-Selectivity-JNT2012}}] \label{defn:spinor-field}
  The \emph{spinor genus field} of $\scrG$ is the abelian field extension
  $\Sigma/F$ corresponding to the open subgroup
  $F_D^\times\Nr(\calN(\wcO))\subseteq \whF^\times$ via the class
  field theory \cite[Theorem~X.5]{Lang-ANT}. Given an $O_F$-order $B$ in a quadratic field extension $K/F$, 
  we introduce the symbol
\begin{equation}\label{eq:defn-sBO}
        s(B,\calO)\coloneqq
    \begin{cases}
        1 & \text{if}\ K\subseteq\Sigma,\\
        0 & \text{otherwise},
    \end{cases}
\end{equation}
which depends only on the  field $K$ and the genus $\scrG$. 
\end{defn}
By the definition of $\Sigma$, there are isomorphisms:
\begin{equation}
  \label{eq:8}
\SG(\calO)\simeq  F_D^\times\bsh \whF^\times/
  \Nr(\calN(\wcO))\simeq   \Gal(\Sigma/F). 
\end{equation}
Given another order $\calO'\in \scrG$, we define $\rho(\calO, \calO')$
to be the image of $[\calO']_\sg\in \SG(\calO)$ in
$\Gal(\Sigma/F)$ under the above isomorphism.  More canonically, we regard the base
point free set $\SG(\scrG)$ as a principal homogeneous space over
$\Gal(\Sigma/F)$ via (\ref{eq:8}). Then $\rho(\calO, \calO')$ is
the unique element of $\Gal(\Sigma/F)$ that sends $[\calO]_\sg$
to $[\calO']_\sg$. By definition, $\rho(\calO, \calO')$ depends only on the spinor genera of $\calO$ and $\calO'$. The map  $(\calO, \calO')\mapsto \rho(\calO, \calO')$ enjoys the following
properties:
\begin{enumerate}[label=(\alph*)]
\item $\rho(\calO, \calO')=1$ if and only if $\calO\sim \calO'$,
\item $\rho(\calO, \calO')=\rho(\calO', \calO)$,
\item   $\rho(\calO, \calO'')=\rho(\calO, \calO')\rho(\calO', \calO'')$
\end{enumerate}
for all $\calO, \calO', \calO''\in \scrG$. 

Next, we introduce two more class fields closely related to (restricted) optimal embeddings. Keep Assumption~\ref{assm:exist-local-everywhere}.  In light of Corollary~\ref{cor:local-imply-exists-global}, there exists $\calO_0\in \scrG$ such that $\REmb((B, \lambda), \calO_0)\neq \emptyset$. We take $\calO=\calO_0$ and $\varphi_0\in \REmb((B, \lambda), \calO_0)$ in the definition \eqref{eq:Adelic-REmb} (resp.~\eqref{eq:Adelic-Emb}) and put 
\begin{equation}\label{eq:new-Adelic-REmb-Emb}
    \wcRE\coloneqq \wcRE((B,\lambda),\calO_0),\qquad \wcE\coloneqq \wcE(B, \calO_0). 
\end{equation}
 By construction,  we have 
    $\whK^\times\calN(\wcO)\subseteq \wcRE\subseteq \wcE$, 
where $K$ is identified with its image $\varphi_0(K)\subset D$.  Taking the reduced norms produces the following chain of inclusions: 
\begin{equation}\label{eq:inclusions-of-subgp}
    F_K^\times\Nr(\whK^\times) \subseteq F_D^\times\Nr(\whK^\times)\Nr(\calN(\wcO)) \subseteq F_D^\times\Nr(\wcRE) \subseteq F_D^\times\Nr(\wcE) \subseteq \whF^\times,
\end{equation}
where $F_K^\times$ denotes the subgroup of $F^\times$ consisting of the elements that are positive at each infinite place of $F$ that is ramified in $K/F$. Moreover, $F_K^\times\subset F_D^\times$ since $K$ is $F$-embeddable into $D$ by our assumption.  In \cite[\S31.3.14]{voight-quat-book}, it is shown that $F_D^\times \Nr(\wcE)$ is an open subgroup of $\whF^\times$ of index at most $2$ independent of the choice of the pair $(\calO_0, \varphi_0)$ (as long as they satisfy the condition $\varphi_0\in \REmb((B,\lambda), \calO_0)\neq\emptyset$).  By the same token, $F_D^\times\Nr(\wcRE)$ enjoys the same properties. 

\begin{defn}[{ \cite[\S2]{M.Arenas-et.al-opt-embed-trees-JNT2018},\cite[\S3]{Arenas-Carmona-Spinor-CField-2003}}]
We define the \emph{optimal representation field} $E_\op/F$ and the \emph{restricted optimal representation field} $E_\op^{\res}$ as the class fields corresponding to the open subgroups $F_D^\times\Nr(\wcE)$ and $F_D^\times\Nr(\wcRE)$ of $\whF^\times$ respectively. 
\end{defn}

Note that the first  two terms of \eqref{eq:inclusions-of-subgp} correspond via the class field theory to $K$ and $K\cap \Sigma$ respectively. Hence there is a chain of class fields of $F$ as follows 
\begin{equation}\label{eq:inclusions-of-field-extn}
    K \supseteq K\cap\Sigma \supseteq E_\op^{\res} \supseteq E_\op \supseteq F.
\end{equation}
The first part of the following lemma is adapted directly from \cite[Lemma~2.2]{M.Arenas-et.al-opt-embed-trees-JNT2018} (see also \cite[Theorem~3.2]{Maclachlan-selectivity-JNT2008} and \cite[Proposition~31.4.4]{voight-quat-book}), and the second part can be proved in exactly the same way, so we omit their proofs. 

\begin{lem}\label{lem:sel-symbol-rep-field}
Keep the order $\calO_0\in \scrG$ with $\REmb((B, \lambda), \calO_0)\neq \emptyset$ fixed, and let $\calO$ be an arbitrary order in  $\scrG$.  \begin{enumerate}[label=(\arabic*), leftmargin=*]
    \item  $\Delta(B,\calO)=1$ if and only if $\rho(\calO,\calO_0)\in\Gal(\Sigma/F)$ restricts to identity on $E_\op$.
    \item $\Delta^{\res}((B,\lambda), \calO)=1$ if and only if $\rho(\calO,\calO_0)\in\Gal(\Sigma/F)$ restricts to identity on $E_\op^{\res}$.
\end{enumerate}
\end{lem}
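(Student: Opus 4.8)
The plan is to reduce both parts to a single adelic computation; part~(1) is then literally \cite[Lemma~2.2]{M.Arenas-et.al-opt-embed-trees-JNT2018} (compare \cite[Proposition~31.4.4]{voight-quat-book}), and part~(2) comes out of the same computation with $\wcRE,\REmb,E_\op^{\res}$ in place of $\wcE,\Emb,E_\op$. The first task is to rewrite the condition $\Delta^{\res}((B,\lambda),\calO)=1$ adelically. By definition it asks for an order $\calO'\in[\calO]_\sg$ with $\REmb((B,\lambda),\calO')\neq\emptyset$. Using the $K^\times$-equivariant bijection $K^\times\backslash\calRE((B,\lambda),\calO')\xrightarrow{\simeq}\REmb((B,\lambda),\calO')$ from the proof of Proposition~\ref{prop:trf-res-opt-emb} (here $K=\varphi_0(K)$ is the fixed image), this is equivalent to the existence of $x\in D^\times$ with $K\cap x\calO'x^{-1}=B$ and $\lambda\in\calN(x\calO'x^{-1})$, hence --- setting $\calO'':=x\calO'x^{-1}$ and recalling that conjugation by $D^\times$ preserves spinor genera --- to the existence of an $O_F$-order $\calO''$ in $D$ with $[\calO'']_\sg=[\calO]_\sg$, $K\cap\calO''=B$, and $\lambda\in\calN(\calO'')$. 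By the local-global correspondence for lattices the last two conditions become $\whK\cap\wcO''=\whB$ and $\lambda\in\calN(\wcO'')$. Fixing $\hat x_\calO\in\whD^\times$ with $\wcO=\hat x_\calO\wcO_0\hat x_\calO^{-1}$ and writing $\wcO''=\hat x\wcO_0\hat x^{-1}$ (legitimate as $\calO''\in\scrG$), the two conditions say exactly $\hat x\in\wcRE$, while $[\calO'']_\sg=[\calO]_\sg$ says $\hat x\in D^\times\whD^1\hat x_\calO\calN(\wcO_0)$. So $\Delta^{\res}((B,\lambda),\calO)=1$ if and only if $\wcRE\cap D^\times\whD^1\hat x_\calO\calN(\wcO_0)\neq\emptyset$.

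The second task is the reduction to reduced norms. By the Hasse-Schilling-Maass theorem $\Nr(D^\times)=F_D^\times$ and $\Nr(\whD^1)=\{1\}$, so $D^\times\whD^1=\Nr^{-1}(F_D^\times)$ and $D^\times\whD^1\hat x_\calO\calN(\wcO_0)=\Nr^{-1}\bigl(F_D^\times\,\Nr(\hat x_\calO)\,\Nr(\calN(\wcO_0))\bigr)$. Since $\wcRE$ is right $\calN(\wcO_0)$-invariant and contains $\calN(\wcO_0)$ (because $\whK^\times\calN(\wcO_0)\subseteq\wcRE$), the intersection $\wcRE\cap D^\times\whD^1\hat x_\calO\calN(\wcO_0)$ is nonempty exactly when $\Nr(\hat x_\calO)\in F_D^\times\Nr(\wcRE)$: the forward implication follows on applying $\Nr$, and for the converse, if $\hat y\in\wcRE$ has $\Nr(\hat y)=f\,\Nr(\hat x_\calO)\,\Nr(n)$ with $f\in F_D^\times$ and $n\in\calN(\wcO_0)$, then $\hat y n^{-1}\hat x_\calO^{-1}$ has reduced norm $f$, hence lies in $D^\times\whD^1$, so $\hat y\in D^\times\whD^1\hat x_\calO\calN(\wcO_0)$. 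Now $F_D^\times\Nr(\wcRE)$ is by definition the open subgroup of $\whF^\times$ cutting out $E_\op^{\res}$, and under \eqref{eq:118}--\eqref{eq:8} the class of $\Nr(\hat x_\calO)$ in $F_D^\times\backslash\whF^\times/\Nr(\calN(\wcO_0))$ corresponds to $\rho(\calO_0,\calO)=\rho(\calO,\calO_0)\in\Gal(\Sigma/F)$. Class field theory therefore converts the condition $\Nr(\hat x_\calO)\in F_D^\times\Nr(\wcRE)$ into the statement that $\rho(\calO,\calO_0)$ restricts to the identity on $E_\op^{\res}$, which is part~(2); part~(1) is the verbatim argument with $\wcE$, $\Emb$, $E_\op$ throughout.

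The step I anticipate being the real obstacle is the norm reduction just described: one must be sure that ``$D^\times\whD^1\hat x_\calO\calN(\wcO_0)$ meets $\wcRE$'' is \emph{equivalent} to, rather than merely implied by, the bare condition $\Nr(\hat x_\calO)\in F_D^\times\Nr(\wcRE)$. This is exactly the place where $D^\times\whD^1=\Nr^{-1}(F_D^\times)$ and the right $\calN(\wcO_0)$-invariance of $\wcRE$ are both needed, and where left- versus right-invariance must be kept straight. Everything else is bookkeeping with the class-field-theoretic dictionary already installed in the paper --- the groups and fields $\Sigma$, $E_\op^{\res}$, $E_\op$, the map $\rho$, and \eqref{eq:118}--\eqref{eq:8} --- once one observes that $\wcRE$ inherits the $(\whK^\times,\calN(\wcO_0))$-bi-invariance of $\wcE$ on which the optimal-embedding argument of \cite{M.Arenas-et.al-opt-embed-trees-JNT2018} rests.
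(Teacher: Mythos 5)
Your argument is correct, and it is precisely the proof the paper has in mind: the paper omits the details by citing \cite[Lemma~2.2]{M.Arenas-et.al-opt-embed-trees-JNT2018} (cf.~\cite[Proposition~31.4.4]{voight-quat-book}) for part~(1) and asserting that part~(2) goes ``in exactly the same way,'' and your adelic reduction --- nonemptiness of $\wcRE\cap D^\times\whD^1\hat x_\calO\calN(\wcO_0)$, converted via $D^\times\whD^1=\Nr^{-1}(F_D^\times)$ and the $(\whK^\times,\calN(\wcO_0))$-bi-invariance of $\wcRE$ into the condition $\Nr(\hat x_\calO)\in F_D^\times\Nr(\wcRE)$, then read off through the class-field dictionary for $E_\op^{\res}\subseteq\Sigma$ --- is exactly that argument with $\wcRE$ in place of $\wcE$. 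The one point you flag as delicate (that the intersection condition is equivalent to, not just implied by, the norm condition) is handled correctly, using that $F_D^\times\Nr(\wcRE)$ is a subgroup containing $\Nr(\calN(\wcO_0))$.
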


In \cite[Proposition~31.5.7]{voight-quat-book}, Voight proves that if $\scrG$ is a genus of Eichler orders, then the inclusion $\whK^\times\calN(\wcO)\subseteq \wcE$ induces an equality $\Nr(\whK^\times)\Nr(\calN(\wcO)) = \Nr(\wcE)$. This result has since been 
extended to any genus of residually unramified orders by Chia-Fu Yu and the second named author in \cite[Lemma~2.14]{Xue-Yu-Selec-2024}. From the chain of inclusions in \eqref{eq:inclusions-of-subgp} (correspondingly, \eqref{eq:inclusions-of-field-extn}) 
and Lemma~\ref{lem:sel-symbol-rep-field}, we immediately get the following proposition. 

\begin{prop}\label{prop:ru-equal-sel-symb}
  Keep  Assumption~\ref{assm:exist-local-everywhere} and suppose further that $\scrG=\scrG_\ru$ is a genus of residually unramified orders in $D$. Then \begin{equation*}
      \Nr(\whK^\times)\Nr(\calN(\wcO)) =\Nr(\wcRE)= \Nr(\wcE),\quad \text{and hence} \quad  
  K\cap \Sigma=E_\op^{\res}=E_\op. 
  \end{equation*}
In particular, 
      $\Delta^{\res}((B, \lambda), \calO)=\Delta(B, \calO)$ for every  $\calO\in \scrG_\ru$. 
\end{prop}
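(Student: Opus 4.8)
The strategy is to deduce everything from the single deep input $\Nr(\whK^\times)\Nr(\calN(\wcO)) = \Nr(\wcE)$ and then let the chain \eqref{eq:inclusions-of-subgp} do the rest by a squeezing argument. So the first step is to invoke \cite[Lemma~2.14]{Xue-Yu-Selec-2024} --- the extension, from Eichler orders to residually unramified orders, of Voight's \cite[Proposition~31.5.7]{voight-quat-book} --- which under Assumption~\ref{assm:exist-local-everywhere} asserts precisely that the inclusion $\whK^\times\calN(\wcO)\subseteq\wcE$ already induces an equality of subgroups of $\whF^\times$, namely $\Nr(\whK^\times)\Nr(\calN(\wcO)) = \Nr(\wcE)$. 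This is the place where the residually-unramified hypothesis is genuinely consumed (via the local analysis of $\calN(\calO_\grp)$ and its reduced norm), and I will treat it as a black box.

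Next I would exploit the chain recorded after \eqref{eq:new-Adelic-REmb-Emb}, where by construction $\whK^\times\calN(\wcO)\subseteq\wcRE\subseteq\wcE$. Since $\Nr$ is a group homomorphism it carries the product $\whK^\times\calN(\wcO)$ onto the subgroup $\Nr(\whK^\times)\Nr(\calN(\wcO))$, so applying $\Nr$ to the chain yields $\Nr(\whK^\times)\Nr(\calN(\wcO))\subseteq\Nr(\wcRE)\subseteq\Nr(\wcE)$; by the first step the two outer terms coincide, so $\Nr(\wcRE)$ is squeezed into equality with both, which is the first displayed equality of the proposition. Multiplying through by $F_D^\times$ gives $F_D^\times\Nr(\whK^\times)\Nr(\calN(\wcO)) = F_D^\times\Nr(\wcRE) = F_D^\times\Nr(\wcE)$ as open subgroups of $\whF^\times$, and by the identifications recorded just after \eqref{eq:inclusions-of-subgp} together with the definitions of $E_\op$ and $E_\op^{\res}$, these three subgroups correspond under class field theory to $K\cap\Sigma$, $E_\op^{\res}$, and $E_\op$ respectively. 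Hence $K\cap\Sigma = E_\op^{\res} = E_\op$, and the chain \eqref{eq:inclusions-of-field-extn} collapses at its bottom three entries.

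For the final assertion I would feed this into Lemma~\ref{lem:sel-symbol-rep-field}: part~(1) characterizes $\Delta(B,\calO)=1$ as the condition that $\rho(\calO,\calO_0)\in\Gal(\Sigma/F)$ restricts to the identity on $E_\op$, and part~(2) is the identical statement for $\Delta^{\res}((B,\lambda),\calO)$ with $E_\op^{\res}$ in place of $E_\op$. Since $E_\op = E_\op^{\res}$ these two conditions are literally the same, so $\Delta^{\res}((B,\lambda),\calO) = \Delta(B,\calO)$ for every $\calO\in\scrG_\ru$. I do not anticipate a real obstacle here: the substantive content is entirely inside \cite[Lemma~2.14]{Xue-Yu-Selec-2024}, and what remains on our side is only the bookkeeping of passing between the norm groups (with or without the $F_D^\times$ factor) and their corresponding fields, plus the elementary observation that the squeeze is legitimate because $\Nr$ carries products of subgroups to products of subgroups.
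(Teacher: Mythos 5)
Your proposal is correct and follows essentially the same route as the paper: the paper likewise treats \cite[Lemma~2.14]{Xue-Yu-Selec-2024} (the residually-unramified extension of Voight's \cite[Proposition~31.5.7]{voight-quat-book}) as the sole substantive input, squeezes $\Nr(\wcRE)$ via the chain $\whK^\times\calN(\wcO)\subseteq\wcRE\subseteq\wcE$ and the corresponding chain \eqref{eq:inclusions-of-subgp}, reads off the field equalities from \eqref{eq:inclusions-of-field-extn}, and concludes via Lemma~\ref{lem:sel-symbol-rep-field}. Nothing is missing.
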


For the reader's convenience, we reproduce the main theorem of \emph{optimal spinor selectivity for residually unramified orders} \cite[Theorem~2.5]{xue-yu:spinor-class-no} (cf.~\cite[Theorem~2.15 and Lemma~2.17]{Xue-Yu-Selec-2024})  and reformulate it in terms of restricted optimal embeddings.

\begin{thm}\label{thm:spinor-selectivity}
Keep the assumption of Proposition~\ref{prop:ru-equal-sel-symb}. Then
\begin{enumerate}[label=(\arabic*), leftmargin=*]
\item  the genus $\scrG_\ru$ is  optimally spinor selective in the restricted sense for the pointed order $(B, \lambda)$ if and only if $K\subset \Sigma$ (i.e.~$s(B, \calO)=1$ from \eqref{eq:defn-sBO}), or equivalently, if and only if
both of the following conditions hold:
 \begin{enumerate}[label=(\alph*)]
  \item both $K$ and  $D$ are unramified
  at every finite place $\grp$ of $F$  and ramify at exactly the same (possibly empty) set of infinite places;
  \item $K/F$ splits at every finite place $\grp$ of $F$
    satisfying $\ord_\grp(\grd(\calO))\equiv
  1\pmod{2}$, where $\grd(\calO)$ denotes the reduced discriminant of $\calO$.  
\end{enumerate}
\item If $\scrG_\ru$ is  selective for $(B,\lambda)$, then
\begin{enumerate}[label=(\roman*)]
\item  for any two orders   $\calO, \calO'\in \scrG_\ru$,  
   \begin{equation}
\label{eq:164}
    \Delta^{\res}((B,\lambda), \calO)=\rho(\calO, \calO')|_K
+\Delta^{\res}((B,\lambda), \calO'), 
  \end{equation}
  where $\rho(\calO, \calO')|_K$ is the restriction of $\rho(\calO,
  \calO')\in \Gal(\Sigma/F)$ to $K$,  and the
  summation on the right is taken inside $\zmod{2}$ with the canonical
  identification $\Gal(K/F)\simeq \zmod{2}$;

\item $\Delta^{\res}((B,\lambda), \calO)=1$ for   exactly half of the spinor genera
$[\calO]_\sg\in \SG(\scrG_\ru)$.
\end{enumerate}
\end{enumerate} 
 \end{thm}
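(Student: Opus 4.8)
The plan is to deduce both parts of Theorem~\ref{thm:spinor-selectivity} from the corresponding statement for unrestricted optimal embeddings, namely \cite[Theorem~2.5]{xue-yu:spinor-class-no} (cf.~\cite[Theorem~2.15, Lemma~2.17]{Xue-Yu-Selec-2024}), using Proposition~\ref{prop:ru-equal-sel-symb} as the bridge. Since Assumption~\ref{assm:exist-local-everywhere} is in force, Corollary~\ref{cor:local-imply-exists-global} provides an order $\calO_0\in\scrG_\ru$ with $\REmb((B,\lambda),\calO_0)\neq\emptyset$, and Proposition~\ref{prop:ru-equal-sel-symb} gives $\Delta^{\res}((B,\lambda),\calO)=\Delta(B,\calO)$ for every $\calO\in\scrG_\ru$ together with $E_\op^{\res}=E_\op=K\cap\Sigma$. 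So every assertion about $\Delta^{\res}$ coincides with the one about $\Delta$, and what remains is to re-derive the clean formulation in terms of the cocycle $\rho$.

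First I would prove part (1). By Lemma~\ref{lem:sel-symbol-rep-field}(2) combined with Proposition~\ref{prop:ru-equal-sel-symb}, for any $\calO\in\scrG_\ru$ one has $\Delta^{\res}((B,\lambda),\calO)=1$ exactly when $\rho(\calO,\calO_0)\in\Gal(\Sigma/F)$ restricts to the identity on $K\cap\Sigma$. Because $[K:F]=2$, the subfield $K\cap\Sigma$ is either $F$ or $K$. If $K\not\subseteq\Sigma$, then $K\cap\Sigma=F$, so that restriction is automatically trivial and $\Delta^{\res}((B,\lambda),\calO)=1$ for every $\calO\in\scrG_\ru$; hence $\scrG_\ru$ is spinor genial, not selective. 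If instead $K\subseteq\Sigma$, then $\Sigma\neq F$ and $\Gal(\Sigma/K)$ is an index-$2$ subgroup of $\Gal(\Sigma/F)\simeq\SG(\scrG_\ru)$ via \eqref{eq:8}, so $\Delta^{\res}$ takes both values $0$ and $1$ as $[\calO]_\sg$ varies, i.e.~$\scrG_\ru$ is selective. This settles the first equivalence. The equivalence of $K\subseteq\Sigma$ with conditions (a)--(b) is precisely the explicit description of the spinor genus field $\Sigma$ (equivalently, of the norm group $\Nr(\calN(\wcO))$) for residually unramified orders, which I would quote from \cite[Theorem~2.5(1)]{xue-yu:spinor-class-no}; the local normalizer indices $[\calN(\calO_\grp):F_\grp^\times\calO_\grp^\times]$ feeding into that computation are the ones recalled in the proof of Lemma~\ref{lem:res-unr-card-of-Picent}.

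Next, assuming $\scrG_\ru$ is selective (so $K\cap\Sigma=K$), I would prove part (2). For (i), property (c) of $\rho$ gives $\rho(\calO,\calO_0)=\rho(\calO,\calO')\,\rho(\calO',\calO_0)$ in $\Gal(\Sigma/F)$; restricting to $K$ and transporting under the identification $\Gal(K/F)\simeq\zmod2$ turns this into the additive relation $\rho(\calO,\calO_0)|_K=\rho(\calO,\calO')|_K+\rho(\calO',\calO_0)|_K$. By the criterion above, $\rho(\calO,\calO_0)|_K\equiv 1+\Delta^{\res}((B,\lambda),\calO)$ in $\zmod2$, and likewise for $\calO'$; substituting, the constant terms cancel and \eqref{eq:164} falls out. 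For (ii), the assignment $[\calO]_\sg\mapsto\rho(\calO,\calO_0)|_K$ is a group homomorphism $\SG(\scrG_\ru)\simeq\Gal(\Sigma/F)\twoheadrightarrow\Gal(K/F)$, surjective because $K\subseteq\Sigma$; its kernel is exactly the set of selected spinor genera and has index $2$, so precisely half of $\SG(\scrG_\ru)$ is selected.

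I expect the only genuinely substantive ingredient, beyond bookkeeping with the $\rho$-cocycle and the dictionary of Lemma~\ref{lem:sel-symbol-rep-field}, to be the arithmetic translation $K\subseteq\Sigma\Leftrightarrow$(a)\,\&\,(b). That is the real technical content of the selectivity theory, but by Proposition~\ref{prop:ru-equal-sel-symb} it is word-for-word identical to the optimal-embedding case and already available in the literature; the reduction carried out earlier in this section is exactly what renders the present argument formal.
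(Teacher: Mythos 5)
Your proposal is correct and takes essentially the route the paper intends: the paper states this theorem without proof, presenting it as a reformulation of \cite[Theorem~2.5]{xue-yu:spinor-class-no} obtained by substituting $\Delta^{\res}((B,\lambda),\calO)=\Delta(B,\calO)$ and $E_\op^{\res}=E_\op=K\cap\Sigma$ via Proposition~\ref{prop:ru-equal-sel-symb}, which is exactly your reduction. Your additional bookkeeping with the cocycle $\rho$ and the index-$2$ kernel argument correctly fills in the details that the cited reference supplies.
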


We return to the general setting where $\scrG$ is an arbitrary genus of $O_F$-orders in $D$ and $\calO$ is a member of $\scrG$, as there is another ingredient for the refined trace formula for restricted optimal embeddings  yet to be introduced.  Just like the genus $\scrG$ is partitioned into a disjoint union of spinor genera, the set $\RIdl(\calO)$ of invertible fractional right $\calO$-ideals can similarly be partitioned into a disjoint union of \emph{spinor classes}.  Recall that  $\whD^1\coloneqq \ker(\Nr: \whD^\times\to \whF^\times)$ denotes the reduced norm one subgroup of $\whD^\times$.

\begin{defn}\label{defn:spinor-class}
\begin{enumerate}[label=(\arabic*), leftmargin=*]
\item Two invertible fractional right $\calO$-ideals $I$ and $I'$ are said to belong to the \emph{same spinor class} if there exists $x\in D^\times\whD^1$ such that $\whI'=x\whI$. The spinor class of $I$ is the set $[I]_\scc$ consisting of all $I'\in \RIdl(\calO)$ in the same spinor class as $I$.

\item  The set of all spinor classes of invertible fractional right $\calO$-ideals is denoted by $\SCl(\calO)$ and regarded as a pointed set with base point $[\calO]_\scc$.

\item Given a fixed spinor class $[I]_\scc\in \SCl(\calO)$, we denote the set
  of ideal classes in  $[I]_\scc$ by $\Cl(\calO,
  [I]_\scc)$. In other words, 
  \begin{equation}
    \label{eq:125}
    \Cl(\calO,
  [I]_\scc):=\{[I']\in \Cl(\calO)\mid [I']\subseteq [I]_\scc\}. 
  \end{equation}
  For simplicity, we put $\Cl_\scc(\calO):=\Cl(\calO, [\calO]_\scc)$.
    \end{enumerate}
\end{defn}

By definition,  the ideal class set $\Cl(\calO)$ decomposes into a disjoint union: 
\begin{equation}\label{eq:decomp-Cl}
    \Cl(\calO)=\bigsqcup_{[I]_\scc\in\SCl(\calO)} \Cl(\calO,[I]_\scc).
\end{equation}
From Remark~\ref{rem:finite-prod}, the left order preserving map  $I'\mapsto I'I^{-1}$ induces a bijection
\begin{equation}\label{eq:spinor-cl-set-bij}
    \Cl(\calO, [I]_\scc)\to \Cl_\scc(\calO_l(I)), \qquad [I']\mapsto [I'I^{-1}]. 
\end{equation}
If we write $\whI=x\wcO$ for some $x\in \whD^\times$ and put $[\whI]_\scc\coloneqq D^\times \whD^1 x\wcO^\times$, then  $\Cl(\calO, [I]_\scc)$ can be described adelically as 
\begin{equation}\label{eq:adelic-des-spinor-cl}
\Cl(\calO,[I]_\scc) \simeq D^\times\backslash \left(D^\times \whD^1 x\wcO^\times\right)/\wcO^\times =D^\times\backslash [\whI]_\scc/\wcO^\times,
\end{equation}
where $[\whI]_\scc$ is a $(D^\times, \wcO^\times)$-bi-invariant subset of $\whD^\times$ depending only on the spinor class $[I]_\scc$. 

Similar to \eqref{eq:118}, the pointed set $\SCl(\calO)$ can be described adelically as follows
\begin{equation}
  \label{eq:14}
  \SCl(\calO)\simeq (D^\times\whD^1)\bsh \whD^\times/\wcO^\times\xrightarrow[\simeq]{\Nr}
  F_D^\times\bsh \whF^\times/\Nr(\wcO^\times),   
\end{equation}
where the  two double coset spaces are canonically bijective via
the reduced norm map. This equips $\SCl(\calO)$ with a finite abelian group structure with $[\calO]_\scc$ as its
identity element,   so we
call it  the \emph{spinor class group} of $\calO$.
In light of the
adelic description of $\SG(\calO)$, there is a canonical surjective
group homomorphism
\begin{equation}
  \label{eq:58}
\Xi:   \SCl(\calO)\twoheadrightarrow \SG(\calO),\qquad
  [I]_\scc\mapsto [\calO_l(I)]_\sg. 
\end{equation}
In particular,  if $I$ and $I'$ belong to the same spinor class, then their
left orders $\calO_l(I)$ and $\calO_l(I')$ belong to  the same spinor
genus.    

\begin{rem}\label{rem:order-SGO}
Suppose that  $\calO$ is residually unramified. It has been shown  in the proof of \cite[Lemma~2.17]{Xue-Yu-Selec-2024} that $\Nr(\wcO^\times)=\whO_F^\times$.  It follows  from \eqref{eq:14} that $\abs{\SCl(\calO)}$ coincides with the \emph{restricted class number $h_D(F)$ of $F$ with respect to $D$} defined in  \cite[Corollary~III.5.7]{vigneras} and reproduced below
\begin{equation}
    h_D(F)\coloneqq \abs{F_D^\times\bsh \whF^\times/\whO_F^\times}.
\end{equation}
If  $D$ is further assumed to be totally definite, then $F_D^\times$ coincides with the subgroup $F_+^\times$ of totally positive elements of $F^\times$. This yields a canonical isomorphism between $\SCl(\calO)$ and the narrow ideal class group $\Cl^+(O_F)$ as follows
\begin{equation}\label{eq:SCl-narrow-Cl}
\SCl(\calO)\cong \Cl^+(O_F), \qquad [I]_\scc\mapsto [\Nr(I)]_+, 
\end{equation}
 hence $\abs{\SCl(\calO)}=h^+(F)$ in this case. Since $\calN(\wcO)\supset \whF^\times\wcO^\times$, the adelic description of the spinor genus group $\SG(\calO)$ in $\eqref{eq:118}$ implies that $\SG(\calO)$ can be identified with a quotient of the \emph{Gauss genus group} $\Cl^+(O_F)/\Cl^+(O_F)^2$, where $\Cl^+(O_F)^2$ denotes the subgroup of $\Cl^+(O_F)$ consisting of the perfect square narrow classes. Moreover, we have $\SG(\calO)\cong \Cl^+(O_F)/\Cl^+(O_F)^2$ if $\Nr(\calN(\calO))=\whF^{\times2}\whO_F^\times$. For example, this condition holds if $\calO$ is a maximal $O_F$-order in a totally definite quaternion $F$-algebra that is unramified at all the finite places of $F$, in which case $\calN(\wcO)=\whF^\times\wcO^\times$.
\end{rem}

 Finally, we are ready to state the spinor trace formula for restricted optimal embeddings, which is a generalization of \cite[Proposition~4.3]{Xue-Yu-Selec-2024}
and admits a similar proof. 

\begin{prop}[Spinor trace formula for restricted optimal embeddings]\label{prop:spinor-trf}
 Keep the assumption of Proposition~\ref{prop:trf-res-opt-emb} and assume  that  $\calO$ is a residually unramified $O_F$-order in $D$. Then for each $[J]_\scc\in\SCl(\calO)$, we have
    \begin{equation}\label{eq:spinor-restricted-trace-formula-2}
        \sum_{[I]\in\Cl(\calO,[J]_\scc)} n((B,\lambda),\calO_l(I),\calO_l(I)^\times)=\frac{2^{s(B,\calO)}\Delta(B,\calO_l(J))h(B)}{h_D(F)}\prod_\grp n_\grp(B,\lambda),
    \end{equation}
    where $h_D(F)=\abs{\SCl(\calO)}$ and
    the product runs over all finite places $\grp$ of $F$.
    If $D$ is further assumed to be ramified at some finite place of $F$, then
    \begin{equation}\label{eq:sp-trf-ram-case}
        \sum_{[I]\in\Cl(\calO,[J]_\scc)} n((B,\lambda),\calO_l(I),\calO_l(I)^\times)=\frac{h(B)}{h_D(F)}\prod_\grp n_\grp(B,\lambda).
    \end{equation}
\end{prop}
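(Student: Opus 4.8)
The proof will run the adelic double-coset argument of \cite[Proposition~4.3]{Xue-Yu-Selec-2024} in the setting of restricted optimal embeddings, feeding in the structural identity for residually unramified orders (Proposition~\ref{prop:ru-equal-sel-symb}) and the selectivity theorem (Theorem~\ref{thm:spinor-selectivity}) at the decisive step. First, if Assumption~\ref{assm:exist-local-everywhere} fails, then $n_\grp(B,\lambda)=0$ for some finite place $\grp$, so the right-hand sides of \eqref{eq:spinor-restricted-trace-formula-2} and \eqref{eq:sp-trf-ram-case} vanish; every term on the left vanishes too, since $\REmb((B,\lambda),\calO_l(I))=\emptyset$ for all $[I]$. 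So assume Assumption~\ref{assm:exist-local-everywhere}, and by Corollary~\ref{cor:local-imply-exists-global} pick $\calO_0\in\scrG$ with some $\varphi_0\in\REmb((B,\lambda),\calO_0)$, identifying $K$ with $\varphi_0(K)\subset D$. The left-order- and ideal-class-preserving bijection $\RIdl(\calO)\simeq\RIdl(\calO_0)$ of Remark~\ref{rem:finite-prod} also carries spinor classes to spinor classes, while $\Delta(B,-)$, $s(B,-)$, $h_D(F)$, and $\prod_\grp n_\grp(B,\lambda)$ depend only on $\scrG$; hence it suffices to prove \eqref{eq:spinor-restricted-trace-formula-2} with $\calO=\calO_0$, for all $[J]_\scc\in\SCl(\calO_0)$. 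From now on $\calO=\calO_0$ and $\wcRE\coloneqq\wcRE((B,\lambda),\calO)$ (which contains $1$), so we are exactly in the situation of Proposition~\ref{prop:ru-equal-sel-symb}.

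Next, break $\wcRE$ up along spinor classes. As $\calO$ is residually unramified, $\Nr(\wcO^\times)=\whO_F^\times$, so $\SCl(\calO)\simeq F_D^\times\backslash\whF^\times/\whO_F^\times$ via the reduced norm, and the reduction $\whD^\times\to\SCl(\calO)$, $\hat{x}\mapsto[\hat{x}\wcO]_\scc$, corresponds to $\hat{x}\mapsto[\Nr(\hat{x})]$. For $[J]_\scc\in\SCl(\calO)$ choose $\hat{g}_J\in\whD^\times$ with $\widehat{J}=\hat{g}_J\wcO$, and set $\wcRE^{[J]}\coloneqq\wcRE\cap(D^\times\whD^1\hat{g}_J\wcO^\times)$; then $\wcRE=\bigsqcup_{[J]_\scc\in\SCl(\calO)}\wcRE^{[J]}$, and refining the decomposition of $\wcRE$ over $\Cl(\calO)$ from the proof of Proposition~\ref{prop:trf-res-opt-emb} (which gives $K^\times\backslash\wcRE_i/\wcO^\times\simeq\REmb((B,\lambda),\calO_i)/\calO_i^\times$) yields
\[
  \bigl|K^\times\backslash\wcRE^{[J]}/\wcO^\times\bigr|=\sum_{[I]\in\Cl(\calO,[J]_\scc)}n\bigl((B,\lambda),\calO_l(I),\calO_l(I)^\times\bigr),
\]
the left-hand side of \eqref{eq:spinor-restricted-trace-formula-2}. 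The set of $[J]_\scc$ with $\wcRE^{[J]}\neq\emptyset$ is the image of $\wcRE$ in $\SCl(\calO)$, that is, the image of $\Nr(\wcRE)$; by Proposition~\ref{prop:ru-equal-sel-symb} this equals the image of $\Nr(\whK^\times)\,\Nr(\calN(\wcO))$, hence is a subgroup $G_0\le\SCl(\calO)$, equal to the product of the images of $\Nr(\whK^\times)$ and of $\Nr(\calN(\wcO))$. On the other hand, $\wcRE^{[J]}\neq\emptyset$ precisely when some order in the spinor genus $[\calO_l(J)]_\sg$ admits a restricted optimal embedding from $(B,\lambda)$ (each such order arises as a left order of an ideal in $[J]_\scc$, after a Skolem–Noether adjustment), i.e.\ when $\Delta^{\res}((B,\lambda),\calO_l(J))=1$, which by Proposition~\ref{prop:ru-equal-sel-symb} means $\Delta(B,\calO_l(J))=1$. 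Thus $G_0=\{[J]_\scc\mid\Delta(B,\calO_l(J))=1\}$, and by Lemma~\ref{lem:sel-symbol-rep-field} and Theorem~\ref{thm:spinor-selectivity} one gets $[\SCl(\calO):G_0]=2^{s(B,\calO)}$: if $s(B,\calO)=0$ the genus is spinor genial for $(B,\lambda)$, so $\Delta(B,-)\equiv1$ and $G_0=\SCl(\calO)$; if $s(B,\calO)=1$ exactly half the spinor genera are selected, and $G_0$ is the preimage of those under $\Xi\colon\SCl(\calO)\twoheadrightarrow\SG(\calO)$.

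It then remains to show that $\bigl|K^\times\backslash\wcRE^{[J]}/\wcO^\times\bigr|$ is independent of $[J]_\scc\in G_0$. Left multiplication by $\whK^\times$ and right multiplication by $\calN(\wcO)$ both stabilise $\wcRE$ and are compatible with the $(K^\times,\wcO^\times)$-biaction (as $\whK^\times$ is abelian and contains $K^\times$, and $\calN(\wcO)$ normalises $\wcO^\times$), and on $\SCl(\calO)\simeq F_D^\times\backslash\whF^\times/\whO_F^\times$ they translate the spinor class by the images of $\Nr(\whK^\times)$ and of $\Nr(\calN(\wcO))$ respectively; since $G_0$ is the product of these two images, for any $[J_1]_\scc,[J_2]_\scc\in G_0$ a suitable pair $\hat{k}\in\whK^\times$, $\hat{n}\in\calN(\wcO)$ makes $\hat{x}\mapsto\hat{k}\hat{x}\hat{n}$ a bijection from $\wcRE^{[J_1]}$ onto $\wcRE^{[J_2]}$ that descends to the double cosets, so the two cardinalities agree; call the common value $c$. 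Summing over $\SCl(\calO)$ and applying Proposition~\ref{prop:trf-res-opt-emb} gives $|G_0|\,c=\bigl|K^\times\backslash\wcRE/\wcO^\times\bigr|=h(B)\prod_\grp n_\grp(B,\lambda)$, so with $|G_0|=h_D(F)/2^{s(B,\calO)}$ we obtain $c=\tfrac{2^{s(B,\calO)}h(B)}{h_D(F)}\prod_\grp n_\grp(B,\lambda)$; as both $c$ (for $[J]_\scc\in G_0$) and $0$ (otherwise) equal $\tfrac{2^{s(B,\calO)}\Delta(B,\calO_l(J))h(B)}{h_D(F)}\prod_\grp n_\grp(B,\lambda)$, this is \eqref{eq:spinor-restricted-trace-formula-2}. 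Finally, if $D$ ramifies at a finite place, then condition~(a) of Theorem~\ref{thm:spinor-selectivity}(1) fails, so $s(B,\calO)=0$, and then $2^{s(B,\calO)}\Delta(B,\calO_l(J))=1$ whenever $\prod_\grp n_\grp(B,\lambda)\neq0$, which gives \eqref{eq:sp-trf-ram-case}. The main difficulty is the bookkeeping of the previous paragraph—pinning down the image of $\wcRE$ in $\SCl(\calO)$ as $\{\Delta(B,\calO_l(J))=1\}$ of index $2^{s(B,\calO)}$, and proving transitivity of the combined $\whK^\times$–$\calN(\wcO)$ action on it—but both rest squarely on the equality $\Nr(\whK^\times)\Nr(\calN(\wcO))=\Nr(\wcRE)$ for residually unramified orders (Proposition~\ref{prop:ru-equal-sel-symb}), which is precisely why residual unramifiedness is imposed.
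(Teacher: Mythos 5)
Your argument is correct and follows essentially the same route as the paper's own (sketched) proof: decompose $\wcRE$ along the adelic description of spinor classes, use $\Nr(\whK^\times)\Nr(\calN(\wcO))=\Nr(\wcRE)$ from Proposition~\ref{prop:ru-equal-sel-symb} to identify the support in $\SCl(\calO)$ as a subgroup of index $2^{s(B,\calO)}$ cut out by $\Delta$, establish equidistribution over that subgroup via the $\whK^\times$--$\calN(\wcO)$ bi-action, and divide the total from Proposition~\ref{prop:trf-res-opt-emb}. The paper delegates these details to \cite[Proposition~4.3]{Xue-Yu-Selec-2024} (with $\wcE$ replaced by $\wcRE$); you have simply written them out, and they check out.
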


Note that \eqref{eq:spinor-restricted-trace-formula-2} holds even without Assumption~\ref{assm:exist-local-everywhere}, in which case both sides of the equation are zero.  On the other hand, if Assumption~\ref{assm:exist-local-everywhere} holds so that $\prod_\grp n_\grp(B,\lambda)\neq 0$, then  
$\Delta(B,\calO_l(J))=\Delta^{\res}((B,\lambda),\calO_l(J))$ by Proposition~\ref{prop:ru-equal-sel-symb}. This is precisely the reason why we have $\Delta(B,\calO_l(J))$ on the right hand side of \eqref{eq:spinor-restricted-trace-formula-2} rather than the more cumbersome $\Delta^{\res}((B,\lambda),\calO_l(J))$. 

\begin{proof}[Sketch of proof of Proposition~\ref{prop:spinor-trf}]
  Without loss of generality, suppose that Assumption~\ref{assm:exist-local-everywhere} holds so that there exists $\calO_0\in \scrG(\calO)$ with $\REmb((B, \lambda), \calO_0)\neq \emptyset$. In light of Remark~\ref{rem:finite-prod}, we may replace $\calO$ by $\calO_0$ and fix $\varphi_0\in \REmb((B, \lambda), \calO)$.    Let $\wcRE\coloneqq \wcRE((B, \lambda), \calO)$ be the subset of $\whD^\times$ defined in \eqref{eq:Adelic-REmb} and  \eqref{eq:new-Adelic-REmb-Emb}. 
    From \eqref{eq:adelic-to-global}, we have 
  \begin{equation}
        \sum_{[I]\in\Cl(\calO,[J]_\scc)} n((B,\lambda),\calO_l(I),\calO_l(I)^\times)=\abs{K^\times\backslash (\wcRE\cap[\widehat{J}]_\scc) /\wcO^\times}, 
    \end{equation} 
    where $[\widehat{J}]_\scc$ is the subset of $\whD^\times$ in the adelic description of $\Cl(\calO,[J]_\scc)$ in \eqref{eq:adelic-des-spinor-cl}.
    In Proposition~\ref{prop:ru-equal-sel-symb}, we have seen that the residually unramified assumption on $\calO$ implies the equality $\Nr(\whK^\times)\Nr(\calN(\wcO)) =\Nr(\wcRE)$. Now the rest of the proof of \eqref{eq:spinor-restricted-trace-formula-2} can be carried over almost verbatim from that of \cite[Proposition~4.3]{Xue-Yu-Selec-2024}, replacing $\wcE$ there by $\wcRE$ whenever applicable (and keeping in mind that $\abs{\SCl(\calO)}=h_D(F)$ in this case). We omit the details.  

    Lastly,  if $D$ is ramified at some finite place of $F$, then $s(B, \calO)=0$,  and the genus $\scrG(\calO)$ is spinor genial for $(B, \lambda)$ by Theorem~\ref{thm:spinor-selectivity}, from which it follows that the value of $\Delta(B,\calO_l(J))\prod_\grp n_\grp(B,\lambda)$ coincides with that of $\prod_\grp n_\grp(B,\lambda)$ in this case. Thus, \eqref{eq:sp-trf-ram-case} is a special form of \eqref{eq:spinor-restricted-trace-formula-2}.  
\end{proof}

If $D$ is
   assumed to satisfy the Eichler condition, then
  $\Cl(\calO, [J]_\scc)$ is a singleton with the unique member $[J]$
  by \cite[Proposition~1.1]{Brzezinski-Spinor-Class-gp-1983},
  so \eqref{eq:spinor-restricted-trace-formula-2} reduces further to produce a generalization of the Vign\'eras-Voight
  formula  \cite[Proposition~4.1]{Xue-Yu-Selec-2024}.

\begin{cor}
    Keep the assumption of Proposition~\ref{prop:trf-res-opt-emb} and assume further that $D$ satisfies the Eichler condition. Then for every residually unramified $O_F$-order $\calO$ in $D$, we have 
    \begin{equation}
n((B,\lambda),\calO,\calO^\times)=\frac{2^{s(B,\calO)}\Delta(B,\calO)h(B)}{h_D(F)}\prod_\grp n_\grp(B,\lambda).
    \end{equation}
\end{cor}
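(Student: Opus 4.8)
The plan is to read this off as the Eichler-condition specialization of the spinor trace formula for restricted optimal embeddings (Proposition~\ref{prop:spinor-trf}); the only extra input is that the Eichler condition forces every spinor class of right ideals to consist of a single ideal class, so that the sum on the left-hand side of \eqref{eq:spinor-restricted-trace-formula-2} collapses to a single term.

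Concretely, given the residually unramified order $\calO$, apply Proposition~\ref{prop:spinor-trf} with $\calO$ itself as the base order and $[J]_\scc=[\calO]_\scc\in\SCl(\calO)$ the trivial spinor class. Under the Eichler condition, \cite[Proposition~1.1]{Brzezinski-Spinor-Class-gp-1983} gives that $\Cl(\calO,[\calO]_\scc)=\Cl_\scc(\calO)$ is the singleton $\{[\calO]\}$. Since $\calO_l(\calO)=\calO$, the left-hand side of \eqref{eq:spinor-restricted-trace-formula-2} is then just $n((B,\lambda),\calO,\calO^\times)$, while its right-hand side simplifies to $2^{s(B,\calO)}\Delta(B,\calO)h(B)h_D(F)^{-1}\prod_\grp n_\grp(B,\lambda)$, which is exactly the claimed formula. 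No appeal to Assumption~\ref{assm:exist-local-everywhere} is needed, because \eqref{eq:spinor-restricted-trace-formula-2} holds unconditionally (both sides vanishing when $\prod_\grp n_\grp(B,\lambda)=0$), and the quantities $s(B,\calO)$, $n_\grp(B,\lambda)$, $h(B)$, $h_D(F)$ depend only on $K$, the genus of $\calO$, and the local data, so the choice of base point is immaterial.

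Since this is essentially a one-line specialization, there is no genuine obstacle. The single point requiring care is that Proposition~\ref{prop:spinor-trf} is stated as a sum over the ideal classes lying in a fixed spinor class rather than for an individual order, so one has to invoke the Brzezinski bijection to identify that sum with $n((B,\lambda),\calO,\calO^\times)$. Equivalently, one may realize an arbitrary order $\calO'$ of the genus as $\calO_l(J)$ for a suitable invertible fractional right $\calO$-ideal $J$ --- possible by the linking property recalled in Remark~\ref{rem:finite-prod} --- and apply \eqref{eq:spinor-restricted-trace-formula-2} over $\Cl(\calO,[J]_\scc)=\{[J]\}$; the two routes give the same conclusion.
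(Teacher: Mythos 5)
Your proposal is correct and follows the same route as the paper: the paper derives the corollary in one line by invoking \cite[Proposition~1.1]{Brzezinski-Spinor-Class-gp-1983} to see that under the Eichler condition each spinor class $[J]_\scc$ contains the single ideal class $[J]$, so the sum in \eqref{eq:spinor-restricted-trace-formula-2} collapses to the single term $n((B,\lambda),\calO,\calO^\times)$ when applied to $[J]_\scc=[\calO]_\scc$. Your additional remarks about base-point independence and the vacuous case $\prod_\grp n_\grp(B,\lambda)=0$ are consistent with the paper's Remark~\ref{rem:finite-prod} and the discussion following Proposition~\ref{prop:spinor-trf}.
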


\section{\texorpdfstring{The spinor type number formula for $t_\sg(\calO)$ and divisibility}{The spinor type number formula for tsg(O) and divisibility}}\label{sec:spinor-type-number-formula}

In this section, we combine the preliminary version of the spinor type number formula in Proposition~\ref{prop:spinor-typ-prelim} with the spinor trace formula for restricted optimal embeddings in Proposition~\ref{prop:spinor-trf} to produce an effective spinor type number formula for a residually unramified order $\calO$ in a totally definite quaternion $F$-algebras $D$.  We then apply it to obtain the divisibility result of the type number $t(\calO)$ in Theorem~\ref{thm:type-num-div}.  The divisibility of  the trace of the Brandt matrix can be proved in the same way.  Throughout this section,  $D$ denotes a totally definite quaternion algebra over a totally real field $F$, and $\calO$ is assumed to be a residually unramified $O_F$-order in $D$ (unless specified otherwise).

In the preliminary spinor type number formula \eqref{eq:spinor-typ-prelim}, there are two terms that are not effectively computable so far. The first is $\Mass_\sg(\calO)$ defined in \eqref{eq:defn-mass-sg}, and the second is the partial sum $\sum_{[I]\in \Cl_\sg(\calO)} n((B, \lambda), \calO_l(I), \calO_l(I)^\times)$ for each pointed CM $O_F$-order $(B, \lambda)\in \scrB$, where $\Cl_\sg(\calO)$ is the subset of $\Cl(\calO)$ defined in \eqref{eq:defn-cl-sg}. Comparably, the left hand side of the spinor trace formula \eqref{eq:spinor-restricted-trace-formula-2} is a similar sum over the  set $\Cl(\calO, [J]_\scc)$ 
for a spinor class $[J]_\scc\in \SCl(\calO)$. To bridge this discrepancy, note that the canonical surjective group homomorphism  $\Xi: \SCl(\calO)\to \SG(\calO)$ given in \eqref{eq:58} fits into a commutative diagram 
\begin{equation}
    \begin{tikzcd} 
\Cl(\calO)\ar[r, "\xi", twoheadrightarrow] \ar[d, "\Psi_\cls"', twoheadrightarrow]& \Tp(\calO)\ar[d, "\Psi_\tp", twoheadrightarrow]\\
\SCl(\calO) \ar[r, "\Xi", twoheadrightarrow] & \SG(\calO)
\end{tikzcd}
\end{equation}
where the top horizontal map is defined in \eqref{eq:Cl-to-Tp}, and the vertical maps are canonical projections.  By definition, $\Tp_\sg(\calO)\coloneqq \Psi_\tp^{-1}([\calO]_\sg)$, so 
$\Cl_\sg(\calO)\coloneqq \xi^{-1}(\Tp_\sg(\calO))$ is the neutral fiber of the composition map $\Psi_\tp\circ \xi: \Cl(\calO)\to \SG(\calO)$. Therefore, if we write the neutral fiber of $\Xi$ as 
\begin{equation}
    \Xi^{-1}([\calO]_\sg)=\{[J_1]_\scc, \cdots, [J_r]_\scc\}\subseteq \SCl(\calO)\quad \text{with}\quad r\coloneqq \frac{\abs{\SCl(\calO)}}{\abs{\SG(\calO)}},
\end{equation}
then $\Cl_\sg(\calO)$ decomposes into a disjoint union 
    \begin{equation}\label{eq:Picent-stable-subset}
    \Cl_\sg(\calO)\coloneqq\bigsqcup_{j=1}^{r} \Cl(\calO,[J_j]_\scc).
\end{equation}

With some extra input from \cite[\S3]{xue-yu:spinor-class-no} for the mass formula, we are now ready to write down the final version of the spinor type number formula.

\begin{thm}\label{thm:spinor-type-number-formula}
Keep the notation and assumption of Theorem~\ref{thm:Korner}.  Let $\calO$ be a residually unramified $O_F$-order in $D$, and $\omega(\calO)$ be the number of distinct prime divisors of the reduced discriminant $\grd(\calO)$. Put $C(\calO)\coloneqq 2^{\omega(\calO)}h(F)\abs{\SG(\calO)}$ for brevity. 
\begin{enumerate}[label=(\roman*), leftmargin=*]
    \item The  spinor type number $t_\sg(\calO)$  is given by the following formula
    \begin{equation}\label{eq:spinor-typ-final}
        t_\sg(\calO)=\frac{1}{C(\calO)} \Big( \Mass(\calO)+\frac{1}{2} \sum_{(B,\lambda)\in\scrB} \frac{2^{s(B,\calO)}\Delta(B,\calO)h(B)}{w(B)} \prod_{\grp\mid \grd(\calO)} n_\grp(B,\lambda) \Big).
    \end{equation}
\item     If $D$ is further assumed to be ramified at some finite place of $F$, then we have 
    \begin{equation}\label{eq:spinor-typ-ram-final}
        t_\sg(\calO)=\frac{1}{C(\calO)} \Big( \Mass(\calO)+\frac{1}{2} \sum_{(B,\lambda)\in\scrB} \frac{h(B)}{w(B)} \prod_{\grp\mid \grd(\calO)} n_\grp(B,\lambda) \Big).
    \end{equation}
    In this case every spinor genus $[\calO']_\sg\in \SG(\calO)$ contains the same number of types of $O_F$-orders, so $t(\calO)=\abs{\SG(\calO)}\cdot t_\sg(\calO)$.
\end{enumerate}
\end{thm}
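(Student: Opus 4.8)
The plan is to feed the spinor trace formula of Proposition~\ref{prop:spinor-trf} into the preliminary spinor type number formula \eqref{eq:spinor-typ-prelim} of Proposition~\ref{prop:spinor-typ-prelim}, using the spinor-class decomposition \eqref{eq:Picent-stable-subset} of $\Cl_\sg(\calO)$ together with the two numerical inputs $\abs{\Picent(\calO)}=h(F)2^{\omega(\calO)}$ (Lemma~\ref{lem:res-unr-card-of-Picent}) and $\abs{\SCl(\calO)}=h^+(F)$ (Remark~\ref{rem:order-SGO}). Throughout, write $\Xi^{-1}([\calO]_\sg)=\{[J_1]_\scc,\dots,[J_r]_\scc\}$ with $r=\abs{\SCl(\calO)}/\abs{\SG(\calO)}=h^+(F)/\abs{\SG(\calO)}$ as in \eqref{eq:Picent-stable-subset}.

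\emph{The embedding term.} Fix $(B,\lambda)\in\scrB$. Since $I_1,\dots,I_{h_\sg}$ represent $\Cl_\sg(\calO)$, decomposition \eqref{eq:Picent-stable-subset} gives $\sum_{i=1}^{h_\sg}n((B,\lambda),\calO_i,\calO_i^\times)=\sum_{j=1}^{r}\sum_{[I]\in\Cl(\calO,[J_j]_\scc)}n((B,\lambda),\calO_l(I),\calO_l(I)^\times)$, and each inner sum is evaluated by \eqref{eq:spinor-restricted-trace-formula-2}. The crucial point is that $\Xi([J_j]_\scc)=[\calO_l(J_j)]_\sg=[\calO]_\sg$ for every $j$ by \eqref{eq:58}; since the symbol $\Delta(B,\cdot)$ depends only on the spinor genus of its second argument by its definition \eqref{eq:defn-Delta}, we get $\Delta(B,\calO_l(J_j))=\Delta(B,\calO)$ for all $j$. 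Adding the $r$ equal contributions, using $r/h^+(F)=1/\abs{\SG(\calO)}$, and invoking Lemma~\ref{lem:n1-pnmid-d} to drop the local factors at $\grp\nmid\grd(\calO)$ yields $\sum_{i=1}^{h_\sg}n((B,\lambda),\calO_i,\calO_i^\times)=\frac{2^{s(B,\calO)}\Delta(B,\calO)h(B)}{\abs{\SG(\calO)}}\prod_{\grp\mid\grd(\calO)}n_\grp(B,\lambda)$.

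\emph{The mass term and assembly.} For $\Mass_\sg(\calO)$ I would invoke the equidistribution of mass over spinor classes from \cite[\S3]{xue-yu:spinor-class-no}: the partial mass $\sum_{[I]\in\Cl(\calO,[J]_\scc)}\frac{1}{[\calO_l(I)^\times:O_F^\times]}$ is independent of $[J]_\scc\in\SCl(\calO)$, hence equals $\Mass(\calO)/h^+(F)$; summing over the $r$ spinor classes in $\Xi^{-1}([\calO]_\sg)$ gives $\Mass_\sg(\calO)=r\,\Mass(\calO)/h^+(F)=\Mass(\calO)/\abs{\SG(\calO)}$. Substituting the two identities into \eqref{eq:spinor-typ-prelim} and factoring out $C(\calO)^{-1}=(2^{\omega(\calO)}h(F)\abs{\SG(\calO)})^{-1}$ produces \eqref{eq:spinor-typ-final}. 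For part~(ii): if $D$ ramifies at some finite place, Theorem~\ref{thm:spinor-selectivity}(1) forces $s(B,\calO)=0$ for every $(B,\lambda)\in\scrB$ and makes $\scrG(\calO)$ spinor genial, so, exactly as in the passage from \eqref{eq:spinor-restricted-trace-formula-2} to \eqref{eq:sp-trf-ram-case}, one has $\Delta(B,\calO)\prod_{\grp\mid\grd(\calO)}n_\grp(B,\lambda)=\prod_{\grp\mid\grd(\calO)}n_\grp(B,\lambda)$; this turns \eqref{eq:spinor-typ-final} into \eqref{eq:spinor-typ-ram-final}. Since the right-hand side of \eqref{eq:spinor-typ-ram-final} depends only on the genus $\scrG(\calO)$, the number $t_\sg(\calO')$ is the same for all $[\calO']_\sg\in\SG(\calO)$, whence $t(\calO)=\abs{\SG(\calO)}\cdot t_\sg(\calO)$ by \eqref{eq:typ-refine}.

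\emph{Main obstacle.} All the hard analysis is already absorbed into Propositions~\ref{prop:spinor-typ-prelim} and~\ref{prop:spinor-trf}, so the only genuinely nontrivial ingredient left is the mass bookkeeping, i.e.\ establishing the equal distribution of $\Mass(\calO)$ among the $h^+(F)$ spinor classes (the ``extra input'' of \cite[\S3]{xue-yu:spinor-class-no}, ultimately a strong-approximation argument for the reduced-norm-one group). The only other point needing care is verifying that $\Delta(B,\cdot)$ is constant on $\Xi^{-1}([\calO]_\sg)$, so that the $r$ terms produced by the spinor trace formula add up cleanly.
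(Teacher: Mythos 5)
Your proposal is correct and follows essentially the same route as the paper: the same decomposition of $\Cl_\sg(\calO)$ into the $r$ spinor classes in $\Xi^{-1}([\calO]_\sg)$, the same mass-equidistribution input from \cite[Lemma~3.1]{xue-yu:spinor-class-no} giving $\Mass_\sg(\calO)=\Mass(\calO)/\abs{\SG(\calO)}$, the same observation that $\Delta(B,\cdot)$ is constant on a spinor genus, and the same treatment of part~(ii) via \eqref{eq:sp-trf-ram-case} and the genus-invariance of the right-hand side. No gaps.
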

\begin{proof}
  (i)  From the definition \eqref{eq:defn-mass-sg} of $\Mass_\sg(\calO)$ and 
    the decomposition \eqref{eq:Picent-stable-subset} of $\Cl_\sg(\calO)$, we have 
    \begin{align}
        \Mass_{\sg}(\calO)&=\sum_{j=1}^r \Mass_\scc(\calO, [J_j]_\scc), \qquad \text{where}\\
         \Mass_\scc(\calO, [J]_\scc)&\coloneqq\sum\limits_{[I]\in\Cl(\calO,[J]_\scc)} \frac{1}{[\calO_l(I)^\times:O_F^\times]}, \qquad \forall\, [J]_\scc\in \SCl(\calO).\notag
    \end{align}
    It is shown in \cite[Lemma~3.1]{xue-yu:spinor-class-no} that the value of $\Mass_\scc(\calO, [J]_\scc)$  is independent of the choice of $[J]_\scc\in \SCl(\calO)$, and hence 
    \begin{equation}\label{eq:same-Mass}
    \Mass_\scc(\calO,[J]_\scc)=\frac{\Mass(\calO)}{\abs{\SCl(\calO)}}, \quad \forall\ [J]_\scc\in\SCl(\calO).
\end{equation}
This implies that 
\begin{equation}\label{eq:mass-sg-value}
    \Mass_{\sg}(\calO)=r\cdot \Mass(\calO,[\calO]_\scc)=\frac{\abs{\SCl(\calO)}}{\abs{\SG(\calO)}}\cdot \frac{\Mass(\calO)}{\abs{\SCl(\calO)}}=\frac{\Mass(\calO)}{\abs{\SG(\calO)}}.
\end{equation}

Similarly, for each fixed pointed CM $O_F$-order $(B, \lambda)\in \scrB$, the optimal spinor selectivity symbol $\Delta(B, \calO_l(J_j))$ takes the same value as $\Delta(B, \calO)$ since $\calO_l(J_j)$ belongs to the same spinor genus as $\calO$ for every $1\leq j\leq r$. Combining the decomposition  of $\Cl_\sg(\calO)$ in \eqref{eq:Picent-stable-subset} with the spinor trace formula \eqref{eq:spinor-restricted-trace-formula-2}, we obtain
\begin{equation}\label{eq:spinor-tr-sg-value}
    \begin{split}
        &\sum_{[I]\in \Cl_\sg(\calO)} n((B, \lambda), \calO_l(I), \calO_l(I)^\times)\\=&\sum_{j=1}^r\; \sum_{[I]\in \Cl_\scc(\calO, [J_j]_\scc)} n((B, \lambda), \calO_l(I), \calO_l(I)^\times)\\
        =&\sum_{j=1}^r\; \frac{2^{s(B,\calO)}\Delta(B,\calO_l(J_j))h(B)}{\abs{\SCl(\calO)}}\prod_\grp n_\grp(B,\lambda)\\
         =&\frac{\abs{\SCl(\calO)}}{\abs{\SG(\calO)}}\cdot \frac{2^{s(B,\calO)}\Delta(B,\calO)h(B)}{\abs{\SCl(\calO)}}\prod_\grp n_\grp(B,\lambda)\\
         =&\frac{2^{s(B,\calO)}\Delta(B,\calO)h(B)}{\abs{\SG(\calO)}}\prod_\grp n_\grp(B,\lambda)=\frac{2^{s(B,\calO)}\Delta(B,\calO)h(B)}{\abs{\SG(\calO)}}\prod_{\grp\mid \grd(\calO)} n_\grp(B,\lambda),
    \end{split}
\end{equation}
where we have applied Lemma~\ref{lem:n1-pnmid-d} in the last step. 

Recall from  Lemma~\ref{lem:res-unr-card-of-Picent} that $\abs{\Picent(\calO)}=2^{\omega(\calO)}h(F)$ for the residually unramified order $\calO$. 
Plugging both \eqref{eq:mass-sg-value} and \eqref{eq:spinor-tr-sg-value} into the preliminary spinor type number formula \eqref{eq:spinor-typ-prelim},   we  obtain the desired spinor type number formula in \eqref{eq:spinor-typ-final}.   

(ii) Suppose further that $D$ is ramified at some finite place of $F$.  The special form of the spinor type number formula in \eqref{eq:spinor-typ-ram-final} is  obtained by applying \eqref{eq:sp-trf-ram-case} instead of  \eqref{eq:spinor-restricted-trace-formula-2} in the above calculation. Observe that the right hand side of \eqref{eq:spinor-typ-ram-final} depends only on the genus of $\calO$. In particular,  the spinor type number $t_\sg(\calO')\coloneqq \abs{\Tp_\sg(\calO')}$ remains invariant as $[\calO']_\sg$ ranges in $\SG(\calO)$.  
It follows from \eqref{eq:typ-refine} that $t(\calO)=\abs{\SG(\calO)}\cdot t_\sg(\calO)$ in this case. 
\end{proof}

Part (ii) of Theorem~\ref{thm:spinor-type-number-formula} already implies that if $D$ is assumed to be ramified at some finite place of $F$, then $t(\calO)$ is divisible by $\abs{\SG(\calO)}$. 
However, as shown by the following example,  such a  divisibility result is too good to be true
as soon as the aforementioned ramification assumption on $D$ is dropped.

\begin{ex}\label{ex:indivisible}
  Let $F=\bbQ(\sqrt{79})$ and $D=D_{\infty_1,\infty_2}$ be the unique quaternion $F$-algebra ramified exactly at the two infinite places of $F$. We have $h(F)=3$ and $h^+(F)=6$.  According to \cite[Table 1, p.~676]{xue-yang-yu:ECNF}, every maximal $O_F$-order $\calO$ in $D$ has class number $h(\calO)=69$. Using the class-type number relation in \cite[Corollary 3.5]{xue-yu:type_no}, we get $t(\calO)=h(\calO)/h(F)=23$. On the other hand, we have seen in Remark~\ref{rem:order-SGO} that in this case $\SG(\calO)$ is isomorphic to the Gauss genus group $\Cl^+(O_F)/\Cl^+(O_F)^2$, which has order $2$ since $h^+(F)=6$. Alternatively, the order of $\Cl^+(O_F)/\Cl^+(O_F)^2$ can also be deduced from the renowned Gauss genus theorem \cite[Proposition~3.11]{Cox-Primes}.  This  provides an example where the order $\abs{\SG(\calO)}=2$ does not divide the type number $t(\calO)=23$.
\end{ex}

Nevertheless, a slightly weaker divisibility result still holds true even if $D$ is unramified at all the finite places of $F$.  Recall from \eqref{eq:118} that the abelian group structure on $\SG(\calO)$ is defined via the following bijection 
\begin{equation}\label{eq:510}
    \SG(\calO)\simeq F_+^\times\backslash \whF^\times/\Nr(\calN(\wcO)), 
\end{equation}
where we have rewritten $F_D^\times$ as $F_+^\times$ since $D$ is totally definite. 
Consider the following  quotient group $\WSG(\calO)$ of $\SG(\calO)$ obtained by replacing $F_+^\times$ with $F^\times$ on the right hand side of \eqref{eq:510}
\begin{equation}\label{eqn:df-WSG}
    \WSG(\calO)\coloneqq F^\times\backslash \whF^\times /\Nr(\calN(\wcO)).
\end{equation}
We shall call it the \emph{wide spinor genus group} of $\calO$ since it can be realized as the \emph{pushout} of the two canonical homomorphisms $\Xi:\SCl(\calO)\to \SG(\calO)$ and $\eta:\SCl(\calO)\cong\Cl^+(O_F)\to \Cl(O_F)$, where we have identified $\SCl(\calO)$ with the narrow class group $\Cl^+(O_F)$ via the isomorphism in \eqref{eq:SCl-narrow-Cl}; see diagram~\eqref{eq:cf-diagram1}.  For example, 
if $D$ is unramified at all the finite places of $F$ and $\calO$ is maximal in $D$, then $\WSG(\calO)\simeq \Cl(O_F)/\Cl(O_F)^2$. Our general divisibility theorem of $t(\calO)$ can be stated as follows.

\begin{thm}\label{thm:div-typ-5.3}
    Let $\calO$ be a residually unramified $O_F$-order in a totally definite quaternion $F$-algebra $D$. Then its type number $t(\calO)$ is divisible by $\abs{\WSG(\calO)}$. If $D$ is further assumed to be ramified at some finite place of $F$, then $t(\calO)$ is  divisible by $\abs{\SG(\calO)}$.
\end{thm}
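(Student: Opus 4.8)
The plan is to read off the divisibility from the spinor type number formula \eqref{eq:spinor-typ-final} together with the decomposition $t(\calO)=\sum_{[\calO']_\sg\in\SG(\calO)}t_\sg(\calO')$ of \eqref{eq:typ-refine}. Let $\pi\colon\SG(\calO)\twoheadrightarrow\WSG(\calO)$ be the canonical surjection of finite abelian groups induced by $F_+^\times\subseteq F^\times$, and set $N\coloneqq\ker\pi$, so every fibre $\pi^{-1}(g)$ ($g\in\WSG(\calO)$) is a coset of $N$ of the same size. Grouping the sum over these fibres gives $t(\calO)=\sum_{g\in\WSG(\calO)}\big(\sum_{[\calO'']_\sg\in\pi^{-1}(g)}t_\sg(\calO'')\big)$, so it suffices to show the inner ``fibre sum'' is independent of $g$; granting that, the fibre sum equals $t(\calO)/\abs{\WSG(\calO)}$, and being a sum of the nonnegative integers $t_\sg(\calO'')$ it is itself a nonnegative integer, whence $\abs{\WSG(\calO)}\mid t(\calO)$.

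To evaluate the fibre sum I would apply \eqref{eq:spinor-typ-final} to each $\calO''\in\scrG(\calO)$ (every member of this genus is again residually unramified, the reduced discriminant and Eichler invariants being genus invariants). In that formula the quantities $C(\calO)$, $\Mass(\calO)$, $\scrB$, $s(B,\calO)$, $w(B)$, $h(B)$ and the $n_\grp(B,\lambda)$ depend only on the genus $\scrG(\calO)$; the sole term sensitive to the spinor genus of $\calO''$ is the optimal spinor selectivity symbol $\Delta(B,\calO'')$. Hence, up to a fixed positive genus-constant, the fibre sum is a genus-constant combination of the quantities $\sum_{[\calO'']_\sg\in\pi^{-1}(g)}\Delta(B,\calO'')$, $(B,\lambda)\in\scrB$, and it is enough to prove that each of these is independent of $g\in\WSG(\calO)$.

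Fix $(B,\lambda)\in\scrB$ and put $K\coloneqq\Frac(B)$. If $\scrG(\calO)$ is spinor genial for $(B,\lambda)$ then $\Delta(B,-)$ is constant on $\SG(\calO)$ and there is nothing to do, so suppose $\scrG(\calO)$ is selective for $(B,\lambda)$; by Theorem~\ref{thm:spinor-selectivity} this forces $K\subset\Sigma$ (the spinor genus field), and in particular Assumption~\ref{assm:exist-local-everywhere} holds, so $\Delta(B,-)=\Delta^{\res}((B,\lambda),-)$ by Proposition~\ref{prop:ru-equal-sel-symb}. By Theorem~\ref{thm:spinor-selectivity}(2)(i), for $\calO'',\calO'''\in\scrG(\calO)$ one has $\Delta(B,\calO'')-\Delta(B,\calO''')\equiv\rho(\calO'',\calO''')|_K\pmod 2$; combining this with the properties (a)--(c) of $\rho$ and the identification $\SG(\calO)\simeq\Gal(\Sigma/F)$ of \eqref{eq:8}, the function $\Delta(B,-)$ becomes an affine function on the group $\SG(\calO)$ whose linear part is the surjective homomorphism $\psi\colon\SG(\calO)\simeq\Gal(\Sigma/F)\twoheadrightarrow\Gal(K/F)\simeq\zmod{2}$ given by restriction to $K$. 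The heart of the argument is to check that $\psi$ is \emph{not} trivial on $N=\ker\pi$. By construction the fixed field of $N$ in $\Sigma$ is the class field $M$ attached to the open subgroup $F^\times\Nr(\calN(\wcO))\subseteq\whF^\times$; since $\calO$ is residually unramified we have $\Nr(\calN(\wcO))\supseteq\whO_F^\times$ (Remark~\ref{rem:order-SGO}), so $F^\times\Nr(\calN(\wcO))\supseteq F^\times\whO_F^\times$ and therefore $M$ lies inside the wide Hilbert class field $H$ of $F$, which is unramified at every archimedean place. On the other hand $K/F$ is a CM extension, hence ramified at every archimedean place of $F$, so $K\not\subseteq H\supseteq M$. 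As $\psi$ has order $2$ and its fixed field inside $\Sigma$ is exactly $K$, and $\psi$ would factor through $\SG(\calO)/N=\Gal(M/F)$ precisely when $K\subseteq M$, we conclude $\psi|_N\neq 1$, i.e.\ $\psi$ restricts to a surjection $N\twoheadrightarrow\zmod{2}$.

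Consequently, on any coset $g\cdot N$ the function $\psi$ takes each value of $\zmod{2}$ exactly $\abs{N}/2$ times, so $\sum_{[\calO'']_\sg\in\pi^{-1}(g)}\Delta(B,\calO'')=\abs{N}/2$ regardless of $g$; adding back the (spinor-genially constant) contributions, the full fibre sum $\sum_{[\calO'']_\sg\in\pi^{-1}(g)}t_\sg(\calO'')$ is independent of $g$, which finishes the first assertion. The second assertion is immediate from Theorem~\ref{thm:spinor-type-number-formula}(ii), which already gives $t(\calO)=\abs{\SG(\calO)}\,t_\sg(\calO)$ when $D$ is ramified at a finite place. I expect the only genuinely delicate point to be the class-field-theoretic step that $\psi$ does not factor through $\WSG(\calO)$ — equivalently the linear disjointness of the Hilbert class field of $F$ with every CM extension of $F$, exactly the phenomenon stressed in the introduction; the remainder is bookkeeping with \eqref{eq:spinor-typ-final}.
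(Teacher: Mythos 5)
Your proposal is correct and follows essentially the same route as the paper: decompose $t(\calO)$ over the fibres of $\SG(\calO)\twoheadrightarrow\WSG(\calO)$, reduce via the spinor type number formula \eqref{eq:spinor-typ-final} to showing that $\sum\Delta(B,\cdot)=\abs{N}/2$ on each fibre for the selective $(B,\lambda)$, and conclude from the fact that the CM field $K$ cannot lie in the totally real field $\Sigma^{\wide}\subseteq\calH$. The paper packages this last step as the linear disjointness $\Gal(K\Sigma^{\wide}/F)\simeq\Gal(\Sigma^{\wide}/F)\times\Gal(K/F)$, which is the same observation as your claim that $\psi|_N\neq 1$.
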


We make use of the class field theory to prove this theorem.  In \eqref{eq:8}, we have established an isomorphism
\begin{equation}
    \SG(\calO)\to \Gal(\Sigma/F), \qquad [\calO']_\sg\mapsto \rho(\calO, \calO'),
\end{equation}
where $\Sigma$ is the spinor genus field in Definition~\ref{defn:spinor-field}. Let $\Sigma^\wide$ be the fixed subfield of $\ker\left(q:\SG(\calO)\to \WSG(\calO)\right)$ so that $\WSG(\calO)\simeq \Gal(\Sigma^\wide/F)$.  Since $\SG(\calO)$ is realized as a quotient group of $\Cl^+(O_F)$, the field $\Sigma$ is a subfield of the \emph{narrow  Hilbert class field} $\calH^+/F$, whose Galois group is canonically identified with $\Cl^+(O_F)$ via the Artin reciprocity map.  By construction, $\Sigma^\wide$ is a subfield of the \emph{(wide) Hilbert class field} $\calH/F$, whose Galois group is canonically identified with $\Cl(O_F)$. These groups fit
into the following  commutative diagram 
\begin{equation}\label{eq:cf-diagram1}
    \begin{tikzcd}
        \Gal(\calH^+/F)\ar[d,twoheadrightarrow] & \Cl^+(O_F)\ar[l, "\cong"']\ar[d,twoheadrightarrow] & \SCl(O_F)\ar[l, "\eqref{eq:SCl-narrow-Cl}", "\cong"']\ar[r, "\Xi", twoheadrightarrow]\ar[d, twoheadrightarrow, "\eta"]\ar[dr, phantom, "\textit{pushout}"] & \SG(\calO)\ar[d, "q",twoheadrightarrow] \ar[r, "\cong", "\eqref{eq:8}"']& \Gal(\Sigma/F)\ar[d, twoheadrightarrow]\\
        \Gal(\calH/F) & \Cl(O_F)\ar[l, "\cong"'] \ar[r, equal]& \Cl(O_F) \ar[r, twoheadrightarrow]& \WSG(\calO) \ar[r, "\cong"]& \Gal(\Sigma^\wide/F)
    \end{tikzcd}
\end{equation}

\begin{proof}[Proof of Theorem~\ref{thm:div-typ-5.3}]  
To prove that $\abs{\WSG(\calO)}$ divides the type number $t(\calO)$, it is enough to show that the fibers of the composition $\Tp(\calO)\xrightarrow{\Psi_\tp} \SG(\calO)\xrightarrow{q} \WSG(\calO)$ share the same cardinality.  Each element of $\WSG(\calO)$ is of the form $[\calO']_\wsg\coloneqq q([\calO']_\sg)$ for some  spinor genus $[\calO']_\sg\in \SG(\calO)$.  Let $m$ be the order of the kernel of $q: \SG(\calO)\to \WSG(\calO)$, and $q^{-1}([\calO']_\wsg)\coloneqq \left\{[\calO_1]_\sg,\cdots,[\calO_m]_\sg\right\}$ be the pre-image of $[\calO']_\wsg$ under the map $q$. 
Then
\begin{equation}\label{eq:card-of-fiber-over-WSG}
    \abs{(q\circ\Psi_\tp)^{-1}([\calO']_\wsg)}=\sum_{i=1}^m t_\sg(\calO_i).
\end{equation}
The fiber $q^{-1}([\calO']_\wsg)$ can also be characterized in terms of the class fields $\Sigma/F$ and $\Sigma^\wide/F$. In light of the commutative diagram \eqref{eq:cf-diagram1}, given $[\calO'']_\sg\in \SG(\calO)$, we have 
\begin{equation}\label{eq:fiber-cf-char}
    [\calO'']_\sg\in q^{-1}([\calO']_\wsg)\quad \Longleftrightarrow \quad \rho(\calO, \calO'')|_{\Sigma^\wide}=\rho(\calO, \calO')|_{\Sigma^\wide}, 
\end{equation}
where $\rho(\calO, \calO'')|_{\Sigma^\wide}$ denotes the restriction of  $\rho(\calO, \calO'')\in \Gal(\Sigma/F)$ to $\Sigma^\wide$.

From the spinor type number formula \eqref{eq:spinor-typ-final}, it is clear that the optimal spinor selectivity in the restricted sense lies at the root cause of the variance of the cardinality of the fibers of $\Psi_\tp: \Tp(\calO)\to \SG(\calO)$.  We then apply Theorem~\ref{thm:spinor-selectivity} to partition the set $\scrB$  of pointed CM $O_F$-orders in the spinor type number formula into two subsets according to whether the genus $\scrG(\calO)$ is selective for $(B, \lambda)\in \scrB$ or not:
\begin{equation*}
    \scrB_\sel\coloneqq\{(B,\lambda)\in\scrB\mid s(B,\calO)=1\text{ and }\prod_\grp n_\grp(B,\lambda)\neq0\}, \quad \scrB_\non\coloneqq\scrB\smallsetminus\scrB_\sel.
\end{equation*}
Recall from \eqref{eq:defn-sBO} that the symbol $s(B, \calO)$ depends only on the fraction field $\Frac(B)$ and the genus $\scrG(\calO)$. 
For each $(B,\lambda)\in\scrB_\non$, we have
\begin{equation}\label{eq:non-sel-part}
   2^{s(B, \calO_i)} \Delta(B,\calO_i)\prod_\grp n_\grp(B,\lambda)=\prod_\grp n_\grp(B,\lambda), \qquad \forall 1\leq i \leq m. 
\end{equation}
Combining \eqref{eq:non-sel-part}, \eqref{eq:card-of-fiber-over-WSG} with the spinor type number formula \eqref{eq:spinor-typ-final}, we obtain 
\begin{equation}\label{eq:partition-spinor-type-number-formula}
    \begin{split}
   \abs{(q\circ\Psi_\tp)^{-1}([\calO']_\wsg)}&=\frac{1}{C(\calO)} \Bigg( 
        m\Mass(\calO)+\frac{m}{2} \sum_{(B,\lambda)\in\scrB_\non} \frac{h(B)}{w(B)}\prod_\grp n_\grp(B,\lambda)\\
        &+\sum_{(B,\lambda)\in\scrB_\sel} \Big(\sum_{i=1}^m\Delta(B,\calO_i)\Big)\frac{h(B)}{w(B)}\prod_\grp n_\grp(B,\lambda) \Bigg),
    \end{split}
\end{equation}
where for simplicity we have put $C(\calO)\coloneqq 2^{\omega(\calO)}h(F)\abs{\SG(\calO)}$ as usual. 

Thus to prove that $\abs{\WSG(\calO)}$ divides $t(\calO)$, it is enough to prove that the value of the summation $\sum_{i=1}^m\Delta(B,\calO_i)$ (taken inside $\Z$) is independent of the choice of $[\calO']_\wsg\in \WSG(\calO)$ for every $(B, \lambda)\in \scrB_\sel$. In fact, we claim that more precisely  
\begin{equation}\label{eq:claim-sum-Delta}
    \sum_{i=1}^{m} \Delta(B,\calO_i))=\frac{m}{2}=\frac{1}{2}\abs{\ker(\SG(\calO)\xrightarrow{q} \WSG(\calO))}, \qquad \forall (B, \lambda)\in \scrB_\sel.
\end{equation}
Given $(B, \lambda)\in \scrB_\sel$, we put  $K\coloneqq \Frac(B)$ and identify $\Gal(K/F)\cong\zmod{2}$ as in Theorem~\ref{thm:spinor-selectivity}. Since $(B,\lambda)\in\scrB_\sel$, we have $K\subseteq\Sigma$. Now it follows from Proposition~\ref{prop:ru-equal-sel-symb} and \eqref{eq:164} that
    \begin{equation}
        \begin{split}
            &\sum_{i=1}^m \Delta(B,\calO_i))
            =\#\{1\le i\le m\mid\rho(\calO, \calO_i)|_K=1-\Delta(B,\calO)\}\\
            \xeq{\eqref{eq:fiber-cf-char}}&\#\{\sigma\in\Gal(\Sigma/F)\,\mid\,\sigma|_{\Sigma^\wide}=\rho(\calO, \calO')|_{\Sigma^\wide}\quad \text{and}\quad \sigma|_K=1-\Delta(B,\calO)\}.
        \end{split}
    \end{equation}
 Both the restriction maps $\Gal(\Sigma/F)\twoheadrightarrow\Gal(\Sigma^\wide/F)$ and $\Gal(\Sigma/F)\twoheadrightarrow\Gal(K/F)$ factor through $\Gal(K\Sigma^\wide/F)$, where $K\Sigma^\wide$ denotes the compositum of $K$ and $\Sigma^\wide$ inside $\Sigma$; see Figure~\ref{fig:cf-diagram}.  Observe that the field $\Sigma^\wide$ is a totally real field since it is  a subfield of the (totally real) Hilbert class field $\calH/F$. On the other hand, $K/F$ is a CM-extension, and hence is linearly disjoint from $\Sigma^\wide/F$. Thus there is a canonical isomorphism 
    \begin{equation}
        \Gal(K\Sigma^\wide/F)\simeq\Gal(\Sigma^\wide/F)\times\Gal(K/F).
    \end{equation}
This implies that 
    \begin{equation}
      \begin{split}
         \sum_{i=1}^m \Delta(B,\calO_i))=&\abs{\ker(\Gal(\Sigma/F)\twoheadrightarrow\Gal(K\Sigma^\wide/F))}\\=&\frac{1}{2}\abs{\ker(\Gal(\Sigma/F)\twoheadrightarrow\Gal(\Sigma^\wide/F))}=\frac{m}{2}. 
      \end{split}  
    \end{equation}
This finishes the verification of Claim \eqref{eq:claim-sum-Delta} and completes the proof of Theorem~\ref{thm:div-typ-5.3}.
\end{proof}

For the reader's convenience, we draw the diagram of the relevant class fields in Figure~\ref{fig:cf-diagram}, where the fields marked with dashed lines will be more notably used in the proof of Theorem~\ref{thm:div-tr-Brandt}.
\begin{figure}[h]
\begin{center}
    \begin{tikzpicture}
    \node (F) at (0,0) {$F$};
    \node (K) at (-1.7,0.8) {$K$};
    \node (Sigma^wd) at (1.7,0.8) {$\Sigma^\wide$};
    \node (KSigma^wd) at (0,1.6) {$K\Sigma^\wide$};
    \node (Sigma) at (-1.7,2.4) {$\Sigma$};
    \node (H) at (1.7,2.4) {$\calH$};
    \node (KH) at (0,3.2) {$K\calH$};
    \node (H^+) at (0,4.5) {$\calH^+$};

    \draw (F) -- (K);
    \draw (F) -- (Sigma^wd);
    \draw (K) -- (KSigma^wd);
    \draw (Sigma^wd) -- (KSigma^wd);
    \draw (K) -- (Sigma);
    \draw (Sigma^wd) -- (H);
    \draw (KSigma^wd) -- (Sigma);
    \draw (Sigma) -- (H^+);
    \draw (H) -- (H^+);
    \draw[dashed] (K) -- (KH);
    \draw[dashed] (H) -- (KH);
    \draw[dashed] (KH) -- (H^+);
    \draw[dashed] (H) -- (F);
\end{tikzpicture}
\end{center}
\caption{Class fields diagram}
\label{fig:cf-diagram}
\end{figure}
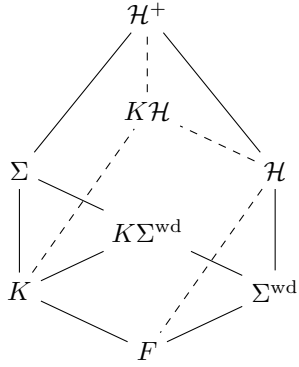

By definition, the type number $t(D)$ of a quaternion $F$-algebra $D$ is the type number of the genus of maximal $O_F$-orders in $D$.

\begin{cor}\label{cor:2powerdiv}
    For any positive integer $n\ge1$, there exists a totally definite quaternion  $D$ over some real quadratic field $F$ such that $2^n \ddiv t(D)$.
\end{cor}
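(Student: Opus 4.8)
The plan is to produce the factor $2^n$ inside the wide spinor genus group of a maximal order and then invoke Theorem~\ref{thm:div-typ-5.3}. Let $F$ be a real quadratic field and let $D$ be the (unique) totally definite quaternion $F$-algebra ramified exactly at the two archimedean places of $F$; this $D$ exists because the number of its ramified places is $2$, which is even, and by construction $D$ is unramified at every finite place of $F$. Let $\calO$ be a maximal $O_F$-order in $D$. As recorded in the text right after \eqref{eqn:df-WSG}, in this situation $\WSG(\calO)\simeq \Cl(O_F)/\Cl(O_F)^2$; moreover $\calO$ is residually unramified (a maximal order is an Eichler order), so Theorem~\ref{thm:div-typ-5.3} gives $\abs{\WSG(\calO)}\mid t(\calO)=t(D)$. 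Hence it suffices to exhibit, for each $n$, a real quadratic field $F$ with $2^n\mid\abs{\Cl(O_F)/\Cl(O_F)^2}$, i.e.\ whose class group has $2$-rank at least $n$.

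Such fields are supplied by Gauss genus theory. By Dirichlet's theorem on primes in arithmetic progressions, choose $n+2$ distinct primes $p_1,\dots,p_{n+2}$, each $\equiv 1\pmod 4$, put $d\coloneqq p_1\cdots p_{n+2}$ and $F\coloneqq\Q(\sqrt{d})$. Since $d$ is squarefree and $d\equiv 1\pmod 4$, the discriminant of $F$ equals $d$ and has exactly $t=n+2$ prime divisors. The Gauss genus theorem \cite[Proposition~3.11]{Cox-Primes} then yields $\Cl^+(O_F)/\Cl^+(O_F)^2\simeq(\Z/2\Z)^{t-1}$, so the narrow class group has $2$-rank $n+1$. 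Because $\Cl(O_F)$ is the quotient of $\Cl^+(O_F)$ by a subgroup of order $1$ or $2$, its $2$-rank is at least $(n+1)-1=n$; therefore $2^n\mid\abs{\Cl(O_F)/\Cl(O_F)^2}=\abs{\WSG(\calO)}$, and hence $2^n\mid t(D)$.

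There is no genuine obstacle here beyond assembling the pieces: the divisibility $\abs{\WSG(\calO)}\mid t(D)$ is exactly Theorem~\ref{thm:div-typ-5.3}, the identification of $\WSG(\calO)$ with the $2$-torsion quotient of the class group is already stated in the paper, and the remaining points (existence and total definiteness of $D$, maximal orders being residually unramified, and the elementary fact that passing to a quotient by a subgroup of order $\le 2$ decreases the $2$-rank by at most $1$) are routine. The one step that calls for a little care is the genus-theoretic estimate: it is tempting to economize by arranging $N_{F/\Q}(\varepsilon_F)=-1$ so that the wide and narrow $2$-ranks coincide and $n+1$ ramified primes would suffice, but verifying that norm condition is delicate; since only an existence statement is required, using $n+2$ ramified primes together with the crude inequality between the two $2$-ranks is clean and completely adequate.
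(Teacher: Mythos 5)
Your proof is correct and follows essentially the same route as the paper: take $D=D_{\infty_1,\infty_2}$ over a real quadratic field whose discriminant is a product of primes $\equiv 1\pmod 4$, identify $\WSG(\calO)$ with $\Cl(O_F)/\Cl(O_F)^2$ for a maximal order $\calO$, and invoke Theorem~\ref{thm:div-typ-5.3}. The only (harmless) difference is that the paper uses $n+1$ ramified primes together with a genus-theory statement giving $\Cl(O_F)/\Cl(O_F)^2\simeq(\zmod 2)^n$ exactly, whereas you use $n+2$ primes, the narrow genus theorem, and the crude bound that passing from $\Cl^+(O_F)$ to $\Cl(O_F)$ drops the $2$-rank by at most one.
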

\begin{proof}
    Take $p_1,\dots,p_{n+1}$ to be $n+1$ distinct primes such that $p_i\equiv1\pmod{4}$ for all $i$ and put $F\coloneqq \bbQ(\sqrt{p_1\cdots p_{n+1}})$.  Then by \cite[Theorem 4, Chapter VII, p.12]{seminarCM-1966} we have
    \begin{equation}
        \Cl(O_F)/\Cl(O_F)^2 \simeq (\zmod{2})^n.
    \end{equation}
    Let $D$ be the totally definite quaternion $F$-algebra that is unramified at all the finite places of $F$ and $\calO$ be any maximal order in $D$. Note that $\WSG(\calO)\simeq\Cl(O_F)/\Cl(O_F)^2$ in this case as already mentioned. Then the corollary follows directly from Theorem~\ref{thm:div-typ-5.3}.
\end{proof}

We return to the more general setting and let $D$ be  an arbitrary   totally definite quaternion algebra over a totally real field $F$.  Let $\grn$ be a nonzero integral ideal of $O_F$. 
The same method as in the proof of Theorem~\ref{thm:div-typ-5.3} can also 
be applied to obtain the divisibility of the trace of the Brandt matrix $\grB(\calO, \grn)$ by $h(F)$
as stated in Theorem~\ref{thm:class-num-div}. We recall the definition of the Brandt matrix \cite[Definition~41.2.4]{voight-quat-book}\cite[Definition~3.1.3]{xue-yang-yu:ECNF} below, which applies to any arbitrary $O_F$-order $\calO$ in $D$.
\begin{defn}\label{defn:brandt-matrix}
     let $\{I_i\mid 1\le i\le h\coloneqq h(\calO)\}$ be a complete set of representatives of $\Cl(\calO)$.  The Brandt matrix $\grB(\calO, \grn)$ is  an $h\times h$ integral matrix $(\grB_{ij}(\calO,\grn))\in \Mat_h(\Z)$ whose $(i, j)$-th entry $\grB_{ij}(\calO, \grn)$ is defined as follows
\begin{equation*}
    \grB_{ij}(\calO, \grn)\coloneqq\#\{ \text{invertible right $\calO$-ideal } I'\subseteq I_i\mid  \Nr(I')=\grn\Nr(I_i) \text{ and } [I']=[I_j]     \}.
\end{equation*}
\end{defn}
From \cite[\S41.2.5]{voight-quat-book}, the Brandt matrix $\grB(\calO, \grn)$ can be interpreted as the matrix of the $\grn$-Hecke operator on a space of algebraic modular forms attached to $\calO$.  Eichler \cite{eichler:crelle55} first gave a formula for the trace of $\grB(\calO, \grn)$ that bears his name nowadays.   K\"orner \cite{korner:1987} extends the \emph{Eichler trace formula} from Eichler 
orders to arbitrary $O_F$-orders in \cite[Theorem~2]{korner:1987}. This formula is further generalized by the second named author and his collaborators in \cite[Theorems~3.3.3 and 3.3.7]{xue-yang-yu:ECNF} so that it applies to arbitrary $\Z$-orders.   From the definition, it is clear that both of the following hold
\begin{enumerate}
    \item if $\grn=O_F$, then $\Tr(\grB(\calO, O_F))=h(\calO)$ since $ \grB_{ii}(\calO, O_F)=1$ for every $1\leq i\leq h$ in this case;
    \item if $\grn$ is not principally generated by a totally positive element, then $ \grB_{ii}(\calO, \grn)=0$ for every $1\leq i\leq h$, and hence $\Tr(\grB(\calO, \grn))=0$  in this case.
\end{enumerate}
For the reader's convenience, we restate the divisibility result on   the trace of the Brandt matrix below.

\begin{thm}\label{thm:div-tr-Brandt}
    Let $\calO$ be a residually unramified $O_F$-order in a totally definite quaternion $F$-algebra $D$. Then for every nonzero integral ideal $\grn\subseteq O_F$, $\Tr(\grB(\calO,\grn))$ is divisible by the class number $h(F)$. If $D$ is further assumed to be ramified at some finite place of $F$, then $\Tr(\grB(\calO,\grn)))$ is divisible by the narrow class number $h^+(F)$.  In particular, the above divisibility results hold for the class number $h(\calO)$ as $h(\calO)=\Tr(\grB(\calO, O_F))$.
\end{thm}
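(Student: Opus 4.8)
The plan is to run the argument of the proof of Theorem~\ref{thm:div-typ-5.3} with the ideal class set $\Cl(\calO)$ and its spinor-class decomposition playing the role that $\Tp(\calO)$ and its spinor-genus fibration played there. The starting point is the elementary observation that each diagonal Brandt entry is an intrinsic invariant of the left order: $\grB_{ii}(\calO,\grn)=c_\grn(\calO_l(I_i))$, where
\[
  c_\grn(\calO'):=\#\bigl(\{x\in\calO'\mid \Nr(x)O_F=\grn\}/\calO'^\times\bigr)
\]
is well-defined because the main involution $x\mapsto \bar x=\Tr(x)-x$ preserves $\calO'$ and interchanges its left and right $\calO'^\times$-orbits on the set $\{x\in\calO'\mid \Nr(x)O_F=\grn\}$. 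Hence $\Tr(\grB(\calO,\grn))=\sum_{[I]\in\Cl(\calO)}c_\grn(\calO_l(I))$. If $\grn$ is not generated by a totally positive element, every term vanishes and there is nothing to prove, so we may assume $\grn=\nu O_F$ with $\nu$ totally positive. Using the decomposition \eqref{eq:decomp-Cl}, $\Cl(\calO)=\bigsqcup_{[J]_\scc\in\SCl(\calO)}\Cl(\calO,[J]_\scc)$, and recalling from Remark~\ref{rem:order-SGO} that $\SCl(\calO)\cong\Cl^+(O_F)$ has order $h^+(F)$, set $T_{[J]_\scc}:=\sum_{[I]\in\Cl(\calO,[J]_\scc)}c_\grn(\calO_l(I))$, so that $\Tr(\grB(\calO,\grn))=\sum_{[J]_\scc}T_{[J]_\scc}$.

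The next step is a \emph{spinor Brandt trace formula} evaluating each $T_{[J]_\scc}$. Sorting the elements $x$ with $\Nr(x)O_F=\grn$ by the isomorphism class of the pointed CM $O_F$-order $(F(x)\cap\calO',x)$, exactly as in the construction of $\scrB$ in Proposition~\ref{prop:def-B} but with the prescribed reduced norm $\grn$ in place of $\gra^2\wp$, produces a finite family $\scrB_\grn$ of pointed CM $O_F$-orders $(B,\lambda)$ with $\Nr_{\Frac(B)/F}(\lambda)O_F=\grn$ through which $c_\grn(\calO')$ is expressed as a ``central'' contribution (present only when $\grn$ is the square of a principal ideal) together with the \emph{optimal} embedding numbers $m(B,\calO',\calO'^\times)$; this is the Eichler trace formula of K\"orner \cite[Theorem~2]{korner:1987} (see also \cite[Theorems~3.3.3 and 3.3.7]{xue-yang-yu:ECNF}), and one notes that $m(B_\grp,\calO_\grp,\calO_\grp^\times)=1$ whenever $\grp\nmid\grd(\calO)$ by \cite[Theorem~II.3.2]{vigneras}. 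Applying the spinor trace formula for optimal embeddings \cite[Proposition~4.3]{Xue-Yu-Selec-2024} to the partial sum over $\Cl(\calO,[J]_\scc)$, and the spinor mass formula $\Mass_\scc(\calO,[J]_\scc)=\Mass(\calO)/h^+(F)$ of \cite[Lemma~3.1]{xue-yu:spinor-class-no} to the central term, yields
\[
  T_{[J]_\scc}=\frac{\varepsilon(\grn)\Mass(\calO)}{h^+(F)}+\frac{1}{2}\sum_{(B,\lambda)\in\scrB_\grn}\frac{2^{s(B,\calO)}\Delta(B,\calO_l(J))\,h(B)}{h^+(F)\,w(B)}\prod_{\grp\mid\grd(\calO)} m(B_\grp,\calO_\grp,\calO_\grp^\times),
\]
with $\varepsilon(\grn)=1$ if $\grn$ is the square of a principal ideal and $0$ otherwise. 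Here the only dependence on $[J]_\scc$ sits in the factors $\Delta(B,\calO_l(J))$, which depend only on the spinor genus $[\calO_l(J)]_\sg=\Xi([J]_\scc)$; by Theorem~\ref{thm:spinor-selectivity} each such factor equals $1$ identically unless the genus is optimally spinor selective for $(B,\lambda)$ --- equivalently $s(B,\calO)=1$, equivalently $\Frac(B)\subseteq\Sigma$ --- in which case it equals $1$ on exactly half the spinor genera. Splitting $\scrB_\grn=\scrB_{\grn,\sel}\sqcup\scrB_{\grn,\non}$ along this dichotomy (as with $\scrB$ in the proof of Theorem~\ref{thm:div-typ-5.3}, cf.~\eqref{eq:non-sel-part}), the central term and the $\scrB_{\grn,\non}$-part of $T_{[J]_\scc}$ become independent of $[J]_\scc$.

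The divisibility then follows by repeating the class field theory argument from the proof of Theorem~\ref{thm:div-typ-5.3}. When $D$ is ramified at a finite place, Theorem~\ref{thm:spinor-selectivity} gives $\scrB_{\grn,\sel}=\emptyset$, so $T_{[J]_\scc}$ is constant over $\SCl(\calO)$ and $\Tr(\grB(\calO,\grn))=h^+(F)\cdot T_{[\calO]_\scc}$. In general, group the spinor classes by the surjection $\eta\colon\SCl(\calO)\cong\Cl^+(O_F)\twoheadrightarrow\Cl(O_F)$ from the pushout diagram \eqref{eq:cf-diagram1}, whose fibers have size $h^+(F)/h(F)$. For a fixed $[\gra]\in\Cl(O_F)$ the restriction $\rho(\calO,\calO_l(J))|_{\Sigma^\wide}$ is constant on $\eta^{-1}([\gra])$ (it factors through $\eta$ by \eqref{eq:cf-diagram1}), so for each $(B,\lambda)\in\scrB_{\grn,\sel}$, using $\Delta(B,\calO_l(J))=\rho(\calO,\calO_l(J))|_{\Frac(B)}+\Delta(B,\calO)$ in $\zmod2$ from \eqref{eq:164} together with Proposition~\ref{prop:ru-equal-sel-symb} and the linear disjointness of the CM extension $\Frac(B)/F$ from the totally real field $\Sigma^\wide/F$, one obtains $\sum_{[J]_\scc\in\eta^{-1}([\gra])}\Delta(B,\calO_l(J))=\tfrac12\,\abs{\eta^{-1}([\gra])}=\tfrac{h^+(F)}{2h(F)}$, independent of $[\gra]$. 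Hence $\sum_{[J]_\scc\in\eta^{-1}([\gra])}T_{[J]_\scc}$ is independent of $[\gra]$, and summing over the $h(F)$ fibers yields $h(F)\mid\Tr(\grB(\calO,\grn))$; the case $\grn=O_F$ recovers $h(F)\mid h(\calO)$, and likewise in the ramified case. The step I expect to be the main obstacle is not the divisibility bookkeeping --- which is essentially identical to Theorem~\ref{thm:div-typ-5.3} --- but the careful setup of the spinor Brandt trace formula itself: isolating the central/identity contribution and checking, via \cite[Lemma~3.1]{xue-yu:spinor-class-no}, that it distributes evenly over spinor classes; confirming that the relevant local data are the optimal embedding numbers $m(B_\grp,\calO_\grp,\calO_\grp^\times)$ rather than the restricted ones $n_\grp(B,\lambda)$; and pinning down the multiplicities (the overall factor $\tfrac12$ and the unit indices $w(B)$) in the family $\scrB_\grn$.
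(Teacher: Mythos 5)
Your proposal is correct and follows essentially the same route as the paper: the paper simply invokes the spinor Eichler trace formula (Proposition~\ref{prop:spinor-Etrf}, quoted from \cite{xue-yu:spinor-class-no}) rather than rederiving it from the classical trace formula, and then carries out exactly your fibering of $\SCl(\calO)$ over $\Cl(O_F)$ via $\eta$ together with the half-and-half count of $\sum\Delta(B,\calO_l(J))$ on each fiber. One small correction: since the fibers of $\eta$ are cosets of $\ker\bigl(\Gal(\calH^+/F)\to\Gal(\calH/F)\bigr)$, the linear disjointness you need is that of the CM field $K=\Frac(B)$ from the full totally real Hilbert class field $\calH$ (not merely from $\Sigma^\wide$), which holds for the same reason.
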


To prove this theorem, we apply the \emph{spinor Eichler trace formula} developed by Chia-Fu Yu and the second named author in \cite[(3.18)]{xue-yu:spinor-class-no} that computes the trace of certain diagonal block of $\grB(\calO, \grn)$. 
More precisely, recall from \eqref{eq:decomp-Cl} that $\Cl(\calO)$ admits a decomposition
\[\Cl(\calO)=\bigsqcup_{[J]_\scc\in\SCl(\calO)} \Cl(\calO,[J]_\scc),\]
where $\Cl(\calO, [\calO]_\scc)$ is abbreviated as $\Cl_\scc(\calO)$ with its cardinality denoted by $h_\scc(\calO)$. From the bijection 
$\Cl(\calO, [J]_\scc)\simeq \Cl_\scc(\calO_l(J))$ 
in \eqref{eq:spinor-cl-set-bij}, we have
\begin{equation}
    h=h(\calO)=\sum_{[J]_\scc\in \SCl(\calO)} h_\scc(\calO_l(J)). 
\end{equation}
We rearrange the representatives $I_1, \cdots, I_h$ of $\Cl(\calO)$ so that for each $[J]_\scc\in \SCl(\calO)$, the set of indices $\calS([J]_\scc)\coloneqq \{1\leq i \leq h\mid [I_i]\in \Cl(\calO,  [J]_\scc)\}$ forms a consecutive sequence of integers, and we further require that $\calS([\calO]_\scc)$ coincides with the first segment $\{1, \cdots, h_\scc(\calO)\}$. Let $\grB_\scc(\calO, [J]_\scc, \grn)$ be the $h_\scc(\calO_l(J))\times h_\scc(\calO_l(J))$-submatrix of $\grB(\calO, \grn)$ formed by selecting the rows and columns with indices  in $\calS([J]_\scc)$. In particular, each $\grB_\scc(\calO, [J]_\scc, \grn)$ is a diagonal block of $\grB(\calO, \grn)$. For example, 
\begin{equation}
   \grB_\scc(\calO, \grn)\coloneqq  \grB_\scc(\calO,[\calO]_\scc, \grn)=(\grB_{ij}(\calO,\grn))_{1\le i,j\le h_\scc(\calO)}
\end{equation}
is just the diagonal $h_\scc(\calO)\times h_\scc(\calO)$-block at the upper left corner of $\grB(\calO, \grn)$, and $\Tr(\grB_\scc(\calO, O_F))=h_\scc(\calO)$ when $\grn=O_F$.  For each $[J]_\scc\in \SCl(\calO)$,  
the map $I_i\mapsto I_iJ^{-1}$ induces an identification
\begin{equation}
    \grB_\scc(\calO, [J]_\scc, \grn)=\grB_\scc(\calO_l(J), \grn),
\end{equation}
from which it follows that 
\begin{equation}\label{eq:sum-of-spinor-Brandt-trace}
    \Tr(\grB(\calO,\grn))=\sum_{[J]_\scc\in\SCl(\calO)} \Tr(\grB_\scc(\calO_l(J),\grn)).
\end{equation}
The  spinor Eichler trace formula computes $\Tr( \grB_\scc(\calO, \grn))$ for any residually unramified $O_F$-order $\calO$. Thus it  can be regarded as a refinement of the classical Eichler trace formula, in the same way that the spinor type number formula  refines the classical type number formula (cf.~\eqref{eq:typ-refine}). 

To write down the spinor Eichler trace formula, without loss of generality,  we assume that $\grn$ is generated by a totally positive element $\textsf{n}$. 
Fix a complete set $\scrU$ of representatives of $O_{F, +}^\times/O_F^{\times2}$ as in \S\ref{sect:rep-orbits-XPO}. 
For each CM $O_F$-order $B$ with  fraction field $K$, we define a finite set
\begin{equation}\label{eq:def-TBn}
    T_{B,\grn}\coloneqq\{\alpha\in B\smallsetminus O_F\mid N_{K/F}(\alpha)=\varepsilon \textsf{n} \text{ for some } \varepsilon\in\scrU\}.
\end{equation}
By the same argument as in the proof of Proposition~\ref{prop:def-B}(2), there are only finitely many CM $O_F$-orders $B$ with $T_{B,\grn}\neq\emptyset$, so we collect them together in a finite set 
 set $\scrC_\grn$.  
\begin{prop}[{The spinor Eichler trace formula, \cite[(3.18)]{xue-yu:spinor-class-no}}]\label{prop:spinor-Etrf}
    Let $\calO$ be a residually unramified $O_F$-order in  $D$, and $\grn\subset O_F$ be an  $O_F$-ideal generated by a totally positive element $\textsf{n}$. Then  \begin{equation}\label{eq:spinor-Brandt-trace}
  \begin{split}
      \Tr(\grB_\scc(\calO,&\grn))=\frac{\delta_\grn\Mass(\calO)}{h^+(F)}\\ &+\frac{1}{4h^+(F)}\sum_{B\in\scrC_\grn} \frac{2^{s(B,\calO)}\Delta(B,\calO)h(B)}{w(B)}\abs{T_{B,\grn}}\prod_\grp m(B_\grp, \calO_\grp, \calO_\grp^\times),
  \end{split}  
\end{equation}
where \begin{enumerate}
    \item $\delta_\grn=1$ or $0$ according to whether $\grn$ is the square of a principal ideal or not;
    \item $m(B_\grp, \calO_\grp, \calO_\grp^\times)$ denotes the number of $\calO_\grp^\times$-conjugacy classes of optimal embeddings from $B_\grp$ into $\calO_\grp$ as in \eqref{eq:no-conjcls-opt}.
\end{enumerate}
 If $D$ is further assumed to be ramified at some finite place of $F$, then 
 \begin{equation}\label{eq:529}
   \begin{split}
      \Tr(\grB_\scc(\calO,\grn))=&\frac{\delta_\grn\Mass(\calO)}{h^+(F)}\\ &+\frac{1}{4h^+(F)}\sum_{B\in\scrC_\grn} \frac{h(B)}{w(B)}\abs{T_{B,\grn}}\prod_\grp m(B_\grp, \calO_\grp, \calO_\grp^\times),
  \end{split}  
\end{equation}
from which it follows that $\Tr(\grB(\calO,\grn))=h^+(F)\Tr(\grB_\scc(\calO,\grn))$ in this case. 
\end{prop}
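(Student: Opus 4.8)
The plan is to derive the spinor Eichler trace formula as the \emph{spinor-class refinement} of the classical Eichler trace formula: I would run the classical derivation (as in \cite[\S30.8, \S41.5]{voight-quat-book} or \cite[\S3.3]{xue-yang-yu:ECNF}) but sum only over the ideal classes lying in the trivial spinor class $[\calO]_\scc$, and at the single point where a global input is needed, replace the trace formula for optimal embeddings \eqref{eq:classic-trf-opt} by its spinor-class refinement \cite[Proposition~4.3]{Xue-Yu-Selec-2024} (the optimal-embedding analogue of Proposition~\ref{prop:spinor-trf}, valid for residually unramified orders).

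First I would translate the diagonal block trace into an element count. By Definition~\ref{defn:brandt-matrix}, $\Tr(\grB_\scc(\calO,\grn))=\sum_{[I]\in\Cl_\scc(\calO)}\grB_{[I][I]}(\calO,\grn)$, and for each class $[I]$ the entry $\grB_{[I][I]}(\calO,\grn)$ counts the invertible right $\calO$-ideals $I'\subseteq I$ with $\Nr(I')=\grn\Nr(I)$ and $[I']=[I]$; writing $I'=\gamma I$ with $\gamma\in D^\times$ identifies these with $\{\gamma\in\calO_l(I):\Nr(\gamma)O_F=\grn\}$ modulo right multiplication by $\calO_l(I)^\times$, and passing first to the quotient by the central subgroup $O_F^\times$ rewrites the entry as $\tfrac1{w_i}\lvert\bar S_{[I]}\rvert$, where $\bar S_{[I]}$ is the (finite) set of $\gamma\in\calO_l(I)$ with $\Nr(\gamma)=\varepsilon\textsf{n}$ for some $\varepsilon\in\scrU$, taken modulo $\{\pm1\}$, and $w_i=[\calO_l(I)^\times:O_F^\times]$. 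Splitting $\bar S_{[I]}$ into its central part ($\gamma\in F^\times$, nonempty modulo $\{\pm1\}$ exactly when $\delta_\grn=1$) and its non-central part, and summing the central contribution over $\Cl_\scc(\calO)$, yields $\delta_\grn\Mass_\scc(\calO,[\calO]_\scc)$, which by \cite[Lemma~3.1]{xue-yu:spinor-class-no} together with $\lvert\SCl(\calO)\rvert=h^+(F)$ (Remark~\ref{rem:order-SGO}) equals $\delta_\grn\Mass(\calO)/h^+(F)$; this is the mass term.

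For the non-central part, I would attach to each $\gamma$ the CM $O_F$-order $B=F(\gamma)\cap\calO_l(I)$ together with the distinguished point $\gamma\in T_{B,\grn}$, noting that the canonical inclusion is an optimal embedding; as in the classical case the map $\varphi\mapsto\varphi(\alpha)$ (for $\alpha\in T_{B,\grn}$) is a bijection between $\Emb(B,\calO_l(I))$ and the $\gamma$'s realizing $(B,\alpha)$, while each non-scalar $\gamma$ is realized by exactly $\lvert\Aut(B)\rvert=2$ points of $T_{B,\grn}$ (namely $\alpha$ and $\bar\alpha$) and $\{\pm1\}$ acts freely. Accounting for these two factors of $2$ gives that the non-central part of $\grB_{[I][I]}(\calO,\grn)$ equals $\tfrac1{4w_i}\sum_{B\in\scrC_\grn}\lvert T_{B,\grn}\rvert\,\lvert\Emb(B,\calO_l(I))\rvert$; since $\lvert\Emb(B,\calO_l(I))\rvert=m(B,\calO_l(I),\calO_l(I)^\times)\,w_i/w(B)$ (all $\calO_l(I)^\times$-conjugacy orbits have size $w_i/w(B)$), the factor $w_i$ cancels and summing over $[I]\in\Cl_\scc(\calO)$ leaves $\tfrac14\sum_{B\in\scrC_\grn}\tfrac{\lvert T_{B,\grn}\rvert}{w(B)}\sum_{[I]\in\Cl_\scc(\calO)}m(B,\calO_l(I),\calO_l(I)^\times)$. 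Now every $[I]\in\Cl_\scc(\calO)$ has $\calO_l(I)$ in the spinor genus $\Xi([\calO]_\scc)=[\calO]_\sg$ by \eqref{eq:58}, so $s(B,\calO_l(I))=s(B,\calO)$ and $\Delta(B,\calO_l(I))=\Delta(B,\calO)$, and \cite[Proposition~4.3]{Xue-Yu-Selec-2024} evaluates the inner sum as $2^{s(B,\calO)}\Delta(B,\calO)h(B)\prod_\grp m(B_\grp,\calO_\grp,\calO_\grp^\times)/h^+(F)$; substituting this and adding the mass term gives \eqref{eq:spinor-Brandt-trace}. Finally, if $D$ ramifies at some finite place, Theorem~\ref{thm:spinor-selectivity}(1) (in its optimal-embedding form, cf.~\cite[Theorem~2.5]{xue-yu:spinor-class-no} and Proposition~\ref{prop:ru-equal-sel-symb}) shows $\scrG(\calO)$ is never optimally spinor selective, so $s(B,\calO)=0$ and, whenever $\prod_\grp m(B_\grp,\calO_\grp,\calO_\grp^\times)\neq0$, $\Delta(B,\calO)=1$; hence $2^{s(B,\calO)}\Delta(B,\calO)=1$ and \eqref{eq:529} follows, and since its right-hand side depends only on $\scrG(\calO)$ the value $\Tr(\grB_\scc(\calO_l(J),\grn))$ is independent of $[J]_\scc\in\SCl(\calO)$, so \eqref{eq:sum-of-spinor-Brandt-trace} gives $\Tr(\grB(\calO,\grn))=h^+(F)\Tr(\grB_\scc(\calO,\grn))$.

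The step I expect to be the main obstacle is the first one: turning the diagonal Brandt entries into a weighted embedding count with the correct normalization, i.e.~disentangling right multiplication by $\calO_l(I)^\times$ (seen by the Brandt entries), conjugation by $\calO_l(I)^\times$ (seen by $m(B,\calO_l(I),\calO_l(I)^\times)$), and multiplication by $O_F^\times$ (handled through $\scrU$ and the residual $\{\pm1\}$), which is exactly what pins down both the factor $1/(4w(B))$ and the per-class weight $\delta_\grn/w_i$ in the central term. This is precisely the delicate combinatorial core of the classical Eichler trace formula; once it is carried out verbatim and the only global ingredient is upgraded to the spinor trace formula for optimal embeddings, the remainder is routine.
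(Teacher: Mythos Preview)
Your proposal is correct, but note that the paper does not actually prove \eqref{eq:spinor-Brandt-trace} here: the proposition is imported wholesale from \cite[(3.18)]{xue-yu:spinor-class-no}, and the only argument the paper supplies is the short paragraph after the statement deducing \eqref{eq:529} from \eqref{eq:spinor-Brandt-trace} (selectivity is impossible once $D$ ramifies at a finite place, so $2^{s(B,\calO)}\Delta(B,\calO)\prod_\grp m_\grp=\prod_\grp m_\grp$) and then obtaining $\Tr(\grB(\calO,\grn))=h^+(F)\Tr(\grB_\scc(\calO,\grn))$ from \eqref{eq:sum-of-spinor-Brandt-trace} by genus-invariance of the right-hand side. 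Your treatment of that second half matches the paper's exactly.

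What you do in addition is supply a self-contained derivation of \eqref{eq:spinor-Brandt-trace} itself. Your route---rewrite each diagonal Brandt entry as a weighted element count, separate the central contribution (yielding the mass term via \eqref{eq:same-Mass} and Remark~\ref{rem:order-SGO}), match non-central elements two-to-one with pairs $(\alpha,\varphi)$ for $\alpha\in T_{B,\grn}$ and $\varphi\in\Emb(B,\calO_l(I))$, convert $\abs{\Emb}$ to $m(\cdot)\,w_i/w(B)$, and finally invoke \cite[Proposition~4.3]{Xue-Yu-Selec-2024} on the spinor-class sum---is precisely the spinor-class refinement of the classical Eichler trace derivation, and is what one expects the cited source to contain. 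The combinatorial normalization you flag as the main obstacle (the factors $1/4$ and $1/w(B)$, and the $\delta_\grn$ in the central term) is handled correctly in your sketch; in particular, the double-counting by $\Gal(K/F)$ and the free $\{\pm1\}$-action on non-central elements together account for the $1/4$, and $\delta_\grn$ arises because a central $\gamma\in O_F$ with $\gamma^2 O_F=\grn$ exists (uniquely up to sign, after fixing $\varepsilon\in\scrU$) iff $\grn$ is the square of a principal ideal.
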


As usual, formula \eqref{eq:529} is just a special form of \eqref{eq:spinor-Brandt-trace}, since the genus $\scrG(\calO)$ is  spinor genial for any CM $O_F$-order $B$ if $D$ is ramified at some finite place of $F$ (See \cite[Theorem~2.15 and Lemma~2.17]{Xue-Yu-Selec-2024}). Clearly, the right hand side of \eqref{eq:529} depends only on the genus $\scrG(\calO)$, so it follows from \eqref{eq:sum-of-spinor-Brandt-trace} that $\Tr(\grB(\calO,\grn))=h^+(F)\Tr(\grB_\scc(\calO,\grn))$ in this case.
If $\grn=O_F$, then we recover the equality $h(\calO)=h^+(F)h_\scc(\calO)$ in \cite[Theorem~3.2]{xue-yu:spinor-class-no}, which can be regarded as a further specialization of our current result.

\begin{proof}[Sketch of proof of Theorem~\ref{thm:div-tr-Brandt}]
   The proof follows exactly the same line of arguments as in the proof of Theorem~\ref{thm:div-typ-5.3}, so we merely provide a sketch. If $D$ is ramified at some finite place of $F$, then it has already been shown in 
   Proposition~\ref{prop:spinor-Etrf} that $\Tr(\grB(\calO,\grn))$ is divisible by the narrow class number $h^+(F)$. 
    Suppose further that $D$ is unramified at all the finite places of $F$. Consider the middle vertical map in the commutative diagram \eqref{eq:cf-diagram1} given as follows
   \begin{equation}
      \eta: \SCl(\calO)\to \Cl(O_F), \qquad [J]_\scc\mapsto [\Nr(J)].
   \end{equation}
 In order to  prove that $\Tr(\grB(\calO,\grn))$ is divisible by $h(F)$, in light of \eqref{eq:sum-of-spinor-Brandt-trace},  it is enough to show that for each ideal class $[\gra]\in \Cl(O_F)$, the value of the summation
  \[\grT([\gra])\coloneqq \sum_{[J]_\scc\in \eta^{-1}([\gra])}\Tr(\grB_\scc(\calO_l(J),\grn)) \]
   is independent of the choice of $[\gra]\in \Cl(O_F)$.  

From the spinor Eichler trace formula \eqref{eq:spinor-Brandt-trace}, it is clear that the optimal spinor selectivity once again lies at the root of the variance of the value of $\Tr(\grB_\scc(\calO_l(J),\grn))$ as $[J]_\scc$ ranges in $\SCl(\calO)$. We put \[k\coloneqq \abs{\ker(\eta: \SCl(\calO)\to \Cl(O_F))}=\abs{\ker(\Cl^+(\calO)\to \Cl(O_F))},\] and write $\eta^{-1}([\gra])=\{[J_1]_\scc, \cdots, [J_k]_\scc\}$. Let $\scrC_{\grn, \sel}$ be the subset of $\scrC_\grn$ consisting of the CM $O_F$-orders $B$ such that the genus $\scrG(\calO)$ is optimally spinor selective for $B$. To prove the invariants of $\grT([\gra])$, it is enough to prove that 
\begin{equation}\label{eq:531}
    \sum_{j=1}^k \Delta(B, \calO_l(J_j))=\frac{k}{2}=\frac{1}{2}\abs{\ker(\Cl^+(\calO)\to \Cl(O_F))}, \qquad \forall B\in \scrC_{\grn, \sel}.
\end{equation}
Observe the similarity between \eqref{eq:531} and \eqref{eq:claim-sum-Delta}. Let $(\gra, \calH/F)\in \Gal(\calH/F)$ be the Artin symbol of $\gra$
in $\Gal(\calH/F)$, and $K$ be the fraction field of $B$.  
Combining the formula for the selectivity symbol \cite[(2.21)]{Xue-Yu-Selec-2024} with the commutative diagram \eqref{eq:cf-diagram1}, we find that 
\[\begin{split}
    \sum_{j=1}^k \Delta&(B, \calO_l(J_j))=\#\big\{\sigma\in\Gal(\calH^+/F)\mid\sigma|_\calH=(\gra, \calH/F)\text{ and } \sigma|_K=1-\Delta(B,\calO)\big\}\\
    &\xeq{(\star)}\abs{\ker(\Gal(\calH^+/F)\to \Gal(K\calH/F))}=\frac{1}{2}\abs{\ker(\Gal(\calH^+/F)\to \Gal(\calH/F))},
\end{split}\]
where the equality $(\star)$ once again follows from the linearly disjointness of the CM-extension $K/F$ from the totally real Hilbert class field $\calH/F$. The relevant class fields involved in the above proof have been marked with dashed lines in Figure~\ref{fig:cf-diagram}. This finishes the verification of \eqref{eq:531} and completes the proof of Theorem~\ref{thm:div-tr-Brandt}.
\end{proof}

\section{\texorpdfstring{Application to ternary quadratic $O_F$-lattices}{Application to ternary quadratic OF-lattices}}\label{sec:ternary-latt}

In this section we re-interpret  the spinor type number as the class number of certain ternary quadratic lattices within a fixed spinor genus. 
Let $F$ be a totally real number field, $V$ be an $F$-vector space with $\dim_F V=3$, and $Q:V\to F$ be a totally positive definite quadratic form. The \emph{even Clifford algebra} \cite[\S5.3]{voight-quat-book} $D\coloneqq \Clf^0(V, Q)$ is a totally definite quaternion $F$-algebra. If we write $D^0\coloneqq \{x\in D\mid \Tr(x)=0\}$ for the trace zero subspace of $D$, then $Q$ can be recovered from the restriction of the reduced norm map $\Nr\mid_{D^0}$ up to a totally positive constant by \cite[Corollary~5.2.6]{voight-quat-book}. As explained in \cite[Chapter~22]{voight-quat-book}, the even Clifford construction extends naturally to $O_F$-lattices in $V$ and establishes a correspondence between  $O_F$-lattices in $V$ and quaternion $O_F$-orders in $D$. We briefly review  the highlights of the theory;  
see \cite[Remark~22.5.13]{voight-quat-book} for the rich history of development in this area.

By definition, the \emph{orthogonal group} $\Ogp_V$ is the linear algebraic $F$-group consisting of all self-isometries of $(V,Q)$. Two $O_F$-lattices $L,L'$ (of full rank) in $V$ are \emph{isometric} if there exists an element $g\in \Ogp_V(F)$ such that $L'=gL$. Since $\dim_FV=3$ is odd, the group  $\Ogp_V$ factors as a direct product  $\SO_V\times \{\pm 1\}$, where $\SO_V$ denotes the \emph{special orthogonal group of $(V, Q)$}. 
Thus two $O_F$-lattices $L, L'$ in $V$ are isometric if and only if they are \emph{properly isometric} \cite[Example~82:4]{o-meara-quad-forms}, that is, if there exists $g\in \SO_V(F)$ such that $L'=gL$.    From \cite[Proposition~4.5.10]{voight-quat-book}, there is a short exact sequence 
\begin{equation}\label{eq:Gspin}
    1\to F^\times \to D^\times\to \SO_V(F)\to 1,
\end{equation}
which realizes $\SO_V$ as the adjoint group $\ul{D}^{\ad}$ of the multiplicative algebraic $F$-group $\ul{D}^\times$.  Accordingly, the \emph{spinor norm map} \cite[\S55]{o-meara-quad-forms}\cite[\S9.3]{Scharlau-Quad-Herm} \[\theta: \SO_V(F)\to F^\times/F^{\times2}\] is induced by the reduced norm map $\Nr: D^\times\to F^\times$. The local spinor norm maps $\theta_\grp:  \SO_V(F_\grp)\to F_\grp^\times/F_\grp^{\times2}$ piece together to form an adelic group homomorphism $\hat{\theta}: \SO_V(\whF)\to\whF^\times/\whF^{\times2}$, and we define  $\Theta_V(\whF)\coloneqq\ker(\hat\theta)$.  As usual, the profinite completion of an $O_F$-lattice $L$ in $V$ is denoted by $\whL$. With an eye specifically on the dimension 3 case,  we introduce the concepts of \emph{genus} and \emph{spinor genus} of $O_F$-lattices in $V$ as follows. 
\begin{defn}
    (1) Two $O_F$-lattices $L,L'$ in $V$ are said to \emph{belong to the same  genus} if there exists $\hat{g}\in \SO(\whF)$ such that $\whL'=\hat{g}\whL$, or equivalently,  if $L_\grp$ and $L'_\grp$ are   isometric at every finite place $\grp$ of $F$. The  genus of $L$ is denoted by $\scrG(L)$ and we write $\Cl(L)\coloneqq\Cl(\scrG(L))$ for the set of isometric classes within $\scrG(L)$.

    (2) Two $O_F$-lattices $L,L'$ in $V$ are said to \emph{belong to the same (proper) spinor genus} if there exists $\hat{g}\in \SO_V(F)\Theta_V(\whF)$ such that $\whL'=\hat{g}\whL$. We write $\Cl_\sg(L)$ for the set of isometric classes within the spinor genus of $L$.
\end{defn}

A priori, the notion of \emph{spinor genus} in \cite[\S102]{o-meara-quad-forms} is defined in terms of the orthogonal group $O_V$. However, as we have already seen,  it does not matter if $\Ogp_V$ is replaced by  $\SO_V$ in the dimension $3$ setting, and there is no distinction between the notion \emph{spinor genus} and \emph{proper spinor genus} in this case.  Meanwhile, there is no distinction between the notion of \emph{genus} and \emph{proper genus} for lattices in general quadratic spaces by \cite[\S91.4a]{o-meara-quad-forms}. 

The even Clifford construction \cite[\S22.3]{voight-quat-book} attaches to each $O_F$-lattice $L$ in $V$ a quaternion $O_F$-order $\calO_L\coloneqq \Clf^0(L)$, where we take the codomain of the quadratic module to be the fractional $O_F$-ideal generated by $Q(L)$. Moreover, the construction is functorial by \cite[Theorem~22.3.1]{voight-quat-book}, so in particular, the stabilizer of $\whL$ in $\SO_V(\whF)$ can be described in terms of the normalizer $\calN(\wcO_L)$ as follows by \cite[(5.12)]{hein2025computinghilbertmodularforms}:
\begin{equation}
   \Stab(\whL)\simeq \calN(\wcO_L)/\whF^\times.
\end{equation}
  From \cite[Proposition~2.1]{Brzezinski-Spinor-Class-gp-1983} (or applying the functoriality again), two $O_F$-lattices $L$ and $L'$ with equal norms in $(V, Q)$ belong to the same genus (resp.~spinor genus) if and only if their corresponding $O_F$-order $\calO_{L}$ and $\calO_{L'}$ in $D$ belong to the same genus (resp.~spinor genus).  In particular, for a fixed $O_F$-lattice $L$ in $(V, Q)$, the order of the spinor genus group $\SG(\calO_L)$ coincides with the number  $g(L)$ of spinor genera within the genus $\scrG(L)$ as studied in \cite[\S102B]{o-meara-quad-forms}.
  Combining the above discussion with \cite[Lemma~5.6 and Corollary~5.8]{hein2025computinghilbertmodularforms}, we obtain the following proposition.

\begin{prop}\label{prop:bij-Cl(L)-Tp(O_L)}
 Fix an $O_F$-lattice $L$ in $(V, Q)$ and put $\calO_L\coloneqq \Clf^0(L)$. The even Clifford map induces a spinor-genus preserving bijection between the isometric class set $\Cl(L)$ and the type set $\Tp(\calO_L)$ of the quaternion $O_F$-order $\calO_L$ as follows
 \begin{equation}\label{eq:663}
     \Cl(L)\simeq \Tp(\calO_L), \qquad [L']\mapsto [\calO_{L'}]. 
 \end{equation}
In particular, the above bijection  restricts to a finer bijection 
 \begin{equation}\label{eq:664}
     \Cl_\sg(L)\simeq \Tp_\sg(\calO_L)
 \end{equation}
 between the spinor isometric class set $ \Cl_\sg(L)$ and the spinor type set $\Tp_\sg(\calO_L)$. 
\end{prop}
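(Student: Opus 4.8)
The plan is to deduce the proposition by assembling three ingredients already in play: the functoriality of the even Clifford construction $\Clf^0$ \cite[Theorem~22.3.1]{voight-quat-book}, Brzezinski's genus/spinor-genus dictionary \cite[Proposition~2.1]{Brzezinski-Spinor-Class-gp-1983}, and the local computations of Hein \cite[Lemma~5.6 and Corollary~5.8]{hein2025computinghilbertmodularforms}. Concretely I would organize the argument into (i) well-definedness of $[L']\mapsto[\calO_{L'}]$, (ii) bijectivity, and (iii) compatibility with spinor genera, and then obtain the refined bijection \eqref{eq:664} by restriction.

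For (i): every $L'\in\scrG(L)$ has the same norm ideal as $L$, since the norm of a lattice is a local invariant and $L'_\grp$ is isometric to $L_\grp$ at every finite place $\grp$; hence the normalization implicit in $\calO_{L'}\coloneqq\Clf^0(L')$ is uniform over the genus. An isometry $L''=gL'$ with $g\in\SO_V(F)$ lifts through the exact sequence \eqref{eq:Gspin} to an element $x\in D^\times$, and functoriality of $\Clf^0$ gives $\calO_{L''}=x\calO_{L'}x^{-1}$, so isometric lattices yield isomorphic orders; moreover $L'\in\scrG(L)$ forces $\calO_{L'}\in\scrG(\calO_L)$ by Brzezinski's correspondence, so the image lies in $\Tp(\calO_L)$ and $\Cl(L)\to\Tp(\calO_L)$ is well defined. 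For (ii), injectivity follows by applying a quasi-inverse of $\Clf^0$, which recovers a ternary quadratic $O_F$-module from a quaternion $O_F$-order: if $\calO_{L'}\cong\calO_{L''}$ the recovered modules are isometric, and since both sit inside $(V,Q)$ with a common norm ideal this gives $[L']=[L'']$; for surjectivity, given $[\calO']\in\Tp(\calO_L)$, essential surjectivity of $\Clf^0$ yields a ternary quadratic module $(L',Q')$ with $\Clf^0(L')\cong\calO'$, and $\calO'\in\scrG(\calO_L)$ places $(L',Q')$ in $\scrG(L)$ by Brzezinski's correspondence, in particular realizes it as a lattice in the ambient space $(V,Q)$. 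I would simply cite \cite[Lemma~5.6 and Corollary~5.8]{hein2025computinghilbertmodularforms} for (i) and (ii) rather than re-running the local bookkeeping, using the sketch above only to explain why those statements apply.

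For (iii): by construction the spinor norm $\theta$ on $\SO_V$ is the reduced norm $\Nr\colon D^\times\to F^\times$ transported through \eqref{eq:Gspin}, so adelically $\Theta_V(\whF)=\ker(\hat\theta)$ corresponds under $\SO_V(\whF)\simeq\whD^\times/\whF^\times$ precisely to the image of $\whD^1\whF^\times$. Therefore $\whL''=\hat g\whL'$ with $\hat g\in\SO_V(F)\Theta_V(\whF)$ translates into $\wcO_{L''}=y\wcO_{L'}y^{-1}$ for some $y\in D^\times\whD^1$, i.e.\ $\calO_{L'}$ and $\calO_{L''}$ lie in the same spinor genus in the sense of Definition~\ref{defn:spinor-genus}, and conversely; hence \eqref{eq:663} is spinor-genus preserving, and restricting it to the spinor genus of $L$ (equivalently, to the neutral fibre of the projection $\Cl(L)\to\SG(\scrG(L))$) gives \eqref{eq:664}.

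The step I expect to be the main obstacle is this matching of the two spinor-genus notions across \eqref{eq:Gspin}: one must know that $\ker(\hat\theta)$ equals, not merely contains, the adelic image of $\whD^1\whF^\times$, which reduces to surjectivity of the local spinor norms onto the relevant square-class groups; together with the companion point that within a genus the norm ideal is constant, so the normalization of $\Clf^0$ requires no extra hypothesis. Both points are handled in the cited sources, so in the write-up I would state them precisely and defer to \cite{voight-quat-book}, \cite{Brzezinski-Spinor-Class-gp-1983}, and \cite{hein2025computinghilbertmodularforms}.
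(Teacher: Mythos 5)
Your proposal is correct and takes essentially the same route as the paper: the paper likewise assembles functoriality of $\Clf^0$, Brzezinski's genus/spinor-genus dictionary for lattices of equal norm, and Hein's Lemma~5.6 and Corollary~5.8, and then records the bijection adelically via $\Cl(L)\simeq \SO_V(F)\backslash\SO_V(\whF)/\Stab(\whL)\simeq D^\times\backslash\whD^\times/\calN(\wcO_L)\simeq\Tp(\calO_L)$, which is exactly your step (iii). The one point you flag as the "main obstacle" is in fact immediate: if $\Nr(x)=a^2$ with $a\in\whF^\times$ then $xa^{-1}\in\whD^1$, so $\ker(\hat\theta)$ is the image of $\whD^1\whF^\times$ without any appeal to surjectivity of local spinor norms.
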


In light of the identification of $\SO_V$ with the adjoint group $\ul{D}^{\ad}$,   the bijection in \eqref{eq:663} can be interpreted adelically as a composition of bijections between the following double coset spaces
\begin{equation}
\begin{split}
    \Cl(L)\simeq   \SO_V(F)\backslash \SO_V(\whF)/\Stab(\whL)&\simeq \ul{D}^{\ad}(F)\backslash \ul{D}^{\ad}(\whF)/\Big(\calN(\wcO_L)/\whF^\times\Big)\\
    &\simeq D^\times\backslash \whD^\times/\calN(\wcO_L)\simeq \Tp(\calO_L). 
\end{split}
\end{equation}

For each finite place $\grp$ of $F$, we write $(V_\grp,Q_\grp)$ for the corresponding quadratic space over $F_\grp$. By \cite[Theorem~5.4.4]{voight-quat-book}, $(V_\grp,Q_\grp)$ is an anisotropic quadratic $F_\grp$-space if and only if $D_\grp$ is a division $F_\grp$-algebra.
The bijection \eqref{eq:664} allows us to apply the main results of this paper to totally positive ternary quadratic $O_F$-lattices. 

\begin{thm}\label{thm:ternary-latt}
Let $L$ and $\calO_L$ be as in Proposition~\ref{prop:bij-Cl(L)-Tp(O_L)} and assume further that $\calO_L$ is residually unramified. 
\begin{enumerate}
    \item The spinor class number $h_\sg(L)\coloneqq \abs{\Cl_\sg(L)}$ of the ternary quadratic lattice $L$ coincides with the spinor type number $t_\sg(\calO_L)$ of $\calO_L$, which  can be calculated by the formulas in Theorem~\ref{thm:spinor-type-number-formula}.
    \item The class number $h(L)\coloneqq \abs{\Cl(L)}$ is always divisible by the order of the wide spinor genus group $\WSG(\calO_L)$. If further 
    $(V_\grp,Q_\grp)$ is anisotropic over $F_\grp$ for some finite place $\grp$ of $F$, then $h(L)$ is divisible by  the number $g(L)$ of spinor genera within $\scrG(L)$.
\end{enumerate}
\end{thm}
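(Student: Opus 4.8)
The plan is to deduce both parts directly from the even Clifford dictionary of Proposition~\ref{prop:bij-Cl(L)-Tp(O_L)}, reducing every assertion about the ternary lattice $L$ to the corresponding assertion about the quaternion order $\calO_L=\Clf^0(L)$, and then to quote the results of Sections~\ref{sec:spinor-trace-formula}--\ref{sec:spinor-type-number-formula}.

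For part (1), I would start from the spinor-genus preserving bijection \eqref{eq:664}, which immediately gives $h_\sg(L)=\abs{\Cl_\sg(L)}=\abs{\Tp_\sg(\calO_L)}=t_\sg(\calO_L)$. Since $\calO_L$ is by hypothesis residually unramified and $D=\Clf^0(V,Q)$ is a totally definite quaternion algebra over the totally real field $F$, Theorem~\ref{thm:spinor-type-number-formula} applies to $\calO_L$ without modification, which produces the claimed explicit formulas for $h_\sg(L)$ (the second, simpler form being available exactly when $D$ is ramified at some finite place).

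For part (2), the coarser bijection \eqref{eq:663} gives $h(L)=\abs{\Cl(L)}=\abs{\Tp(\calO_L)}=t(\calO_L)$. The first assertion of Theorem~\ref{thm:div-typ-5.3} then says that $\abs{\WSG(\calO_L)}$ divides $t(\calO_L)=h(L)$, which is the first divisibility claim. For the refinement, I would invoke the criterion recalled just before the theorem (from \cite[Theorem~5.4.4]{voight-quat-book}) that $(V_\grp,Q_\grp)$ is anisotropic over $F_\grp$ precisely when $D_\grp$ is a division $F_\grp$-algebra, i.e.\ precisely when $D$ ramifies at $\grp$; thus the extra hypothesis of part (2) says exactly that $D$ is ramified at some finite place of $F$. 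The second assertion of Theorem~\ref{thm:div-typ-5.3} then yields that $\abs{\SG(\calO_L)}$ divides $t(\calO_L)=h(L)$, and one finishes by recalling (from the paragraph preceding Proposition~\ref{prop:bij-Cl(L)-Tp(O_L)}, which rests on \cite[Proposition~2.1]{Brzezinski-Spinor-Class-gp-1983}) that $\abs{\SG(\calO_L)}$ equals the number $g(L)$ of spinor genera inside $\scrG(L)$.

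There is no genuine obstacle here: the whole weight of the result sits in the spinor trace formula and the divisibility arguments of Sections~\ref{sec:spinor-trace-formula}--\ref{sec:spinor-type-number-formula}, and what remains is essentially bookkeeping through the Clifford correspondence. The only points to double-check are the minor translations — that the residually unramified hypothesis on $\calO_L$ is exactly the standing assumption of Theorems~\ref{thm:spinor-type-number-formula} and \ref{thm:div-typ-5.3} (so nothing needs to be reproved), that ``anisotropic at $\grp$'' is equivalent to ``$D$ ramified at $\grp$'' rather than to a weaker local condition, and that the identification $\abs{\SG(\calO_L)}=g(L)$ is legitimate (it uses that all lattices in the genus $\scrG(L)$ share the same norm ideal, which is automatic from local isometry).
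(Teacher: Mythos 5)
Your proposal is correct and is essentially the argument the paper intends (the paper gives no separate proof of this theorem, treating it as an immediate consequence of the bijections \eqref{eq:663}--\eqref{eq:664}, the identification $\abs{\SG(\calO_L)}=g(L)$ stated just before Proposition~\ref{prop:bij-Cl(L)-Tp(O_L)}, the equivalence ``$(V_\grp,Q_\grp)$ anisotropic $\Leftrightarrow$ $D_\grp$ division'', and Theorems~\ref{thm:spinor-type-number-formula} and \ref{thm:div-typ-5.3}). All the translation points you flag for double-checking are exactly the ones the paper records in the surrounding text, so nothing further is needed.
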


To apply Theorem~\ref{thm:ternary-latt} to a given $O_F$-lattice $L$ in $(V, Q)$, it is necessary to ensure that  $\calO_L$ is residually unramified.  The Eichler invariant of $\calO_L$ at each finite place $\grp$ of $F$ can be calculated in terms of the local lattice $L_\grp$ by the method explained in \cite[\S24.3.9]{voight-quat-book}.  Write $Q_\grp(L_\grp)=a_\grp O_{F_\grp}$ for some $a_\grp\in F_\grp^\times$. According to \cite[Theorem~22.3.1]{voight-quat-book},  similar lattices give rise to isomorphic quaternion orders. Without loss of generality, we replace $Q_\grp$ by $\frac{1}{a_\grp}Q_\grp$ so that $Q_\grp(L_\grp)=O_{F_\grp}$.  The $O_{F_\grp}$-lattices corresponding to residually unramified orders can be characterized as follows. 

\begin{prop}\label{prop:res-unr-equiv}
Let $L_\grp$ be an $O_{F_\grp}$-lattice in $(V_\grp, Q_\grp)$ with $Q_\grp$ suitably normalized so that $Q_\grp(L_\grp)=O_{F_\grp}$ as above. The following are equivalent:
\begin{enumerate}[label=(\arabic*), leftmargin=*]
    \item $\calO_{L_\grp}$ is residually unramified;
    \item the invariant $\Omega(L_\grp)$ as defined in \cite[p.~477]{Brzezinski-Spinor-Class-gp-1983} coincides with the unit ideal $O_{F_\grp}$;
    \item The discriminant module $L_\grp^\#/L_\grp$ is a cyclic $O_{F_\grp}$-module, where $L_\grp^\#$ denotes the dual $O_{F_\grp}$-lattice of $L_\grp$;
    \item  The $O_{F_\grp}$-lattice $L_\grp$ decomposes as the orthogonal direct sum $U_\grp\perp O_{F_\grp}z$ for some $z\in L_\grp$, where $U_\grp$ is a unimodular quadratic $O_{F_\grp}$-lattice of rank $2$.
\end{enumerate}
\end{prop}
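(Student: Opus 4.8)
The plan is to work entirely locally at $\grp$, exploiting the even Clifford correspondence $L_\grp\mapsto\calO_{L_\grp}\coloneqq\Clf^0(L_\grp)$, which is functorial \cite[Theorem~22.3.1]{voight-quat-book} and whose effect on the Eichler invariant is made explicit in \cite[\S24.3.9]{voight-quat-book}. I would first dispose of the purely lattice-theoretic equivalence $(3)\Leftrightarrow(4)$, then tie $(1)$ and $(2)$ to the condition that the discriminant module is cyclic.

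For $(3)\Leftrightarrow(4)$: fix a Jordan splitting $L_\grp=L_0\perp L_1$ into a unimodular component $L_0$ and a component $L_1$ with scale contained in $\grp$ (O'Meara's theory, \cite[\S91--93]{o-meara-quad-forms}). The normalization $Q_\grp(L_\grp)=O_{F_\grp}$ forces $L_0\neq0$: the norm ideal of a $\grp^b$-modular lattice with $b\ge1$ lies in its scale $\subseteq\grp$, so a lattice with no unimodular Jordan block has norm ideal inside $\grp$, contradicting $Q_\grp(L_\grp)=O_{F_\grp}$. Since $L_0^\#=L_0$, we have $L_\grp^\#/L_\grp\cong L_1^\#/L_1$, and a short scale count — a $\grp^b$-modular lattice of rank $r$ has discriminant module $(O_{F_\grp}/\grp^{2b})^r$, and Jordan blocks of distinct scales contribute independent summands — shows $L_1^\#/L_1$ is cyclic exactly when $\rk(L_1)\le1$, i.e. when $\rk(L_0)\in\{2,3\}$. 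In the rank-$2$ case $U_\grp\coloneqq L_0$ and $L_1=O_{F_\grp}z$ yield $(4)$; in the rank-$3$ case one peels off a unary Jordan block (every unimodular lattice of odd rank splits off a rank-$1$ summand, including at dyadic places, by \cite[\S93]{o-meara-quad-forms}) to again get $(4)$. Conversely $(4)$ gives $L_\grp^\#/L_\grp\cong(O_{F_\grp}z)^\#/(O_{F_\grp}z)\cong O_{F_\grp}/\grp^{\ord_\grp(2Q_\grp(z))}$, which is cyclic.

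For $(1)\Leftrightarrow(3)$, I would use $(3)\Leftrightarrow(4)$ and the explicit Clifford computation. Given $(4)$, compute $\Clf^0(U_\grp\perp O_{F_\grp}z)$ directly from a Gram matrix of $U_\grp$ and the scalar $Q_\grp(z)$, and check case by case that $\calO_{L_\grp}/\grJ(\calO_{L_\grp})$ is never $\grk_\grp$: one finds that $\calO_{L_\grp}$ is a maximal order (hence $e_\grp=\pm1$ or $2$) when $Q_\grp(z)$ is a unit, and an Eichler order of level $\grp^{\ord_\grp(Q_\grp(z))}$ (hence $e_\grp=1$ or $2$) otherwise, consistent with \cite[\S24.3.9]{voight-quat-book} and \cite[Lemma~24.3.6]{voight-quat-book}; so $e_\grp(\calO_{L_\grp})\neq0$, giving $(1)$. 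For the converse I would argue by contraposition: if $(3)$ fails then, by $(3)\Leftrightarrow(4)$, the non-unimodular Jordan part of $L_\grp$ has rank $\ge2$, and feeding this Jordan shape into the description of $\Clf^0$ in \cite[\S24.3.9]{voight-quat-book} shows $\calO_{L_\grp}/\grJ(\calO_{L_\grp})\cong\grk_\grp$, i.e. $e_\grp(\calO_{L_\grp})=0$.

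Finally $(2)\Leftrightarrow(3)$ is obtained by unwinding Brzezinski's definition of $\Omega(L_\grp)$ from \cite[p.~477]{Brzezinski-Spinor-Class-gp-1983}: it is an ideal extracted from the elementary-divisor data of $L_\grp$ that measures precisely the failure of $L_\grp^\#/L_\grp$ to be cyclic, so $\Omega(L_\grp)=O_{F_\grp}$ exactly when the discriminant module is cyclic; this is a direct local computation from the definition. The main obstacle I anticipate is the bookkeeping at the dyadic places: the unimodular binary lattice $U_\grp$ need not be diagonal (it can be of the form $A(\alpha,\beta)$), so both the Jordan-splitting step and, especially, the explicit even Clifford computation and the comparison with $\Omega(L_\grp)$ require the dyadic structure theory of \cite[\S93]{o-meara-quad-forms} and a careful parallel analysis; once that is set up, the remaining verifications are routine.
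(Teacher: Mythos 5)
The paper does not actually prove this proposition from scratch: it quotes Brzezinski's Proposition~2.7 for $(1)\Leftrightarrow(2)$ and a lattice-theoretic lemma of Hein et al.\ for $(1)\Leftrightarrow(3)\Leftrightarrow(4)$. Your attempt at a self-contained local proof is therefore a genuinely different and more ambitious route, and its architecture (Jordan splittings for $(3)\Leftrightarrow(4)$, an explicit even Clifford computation for the link with $(1)$, unwinding $\Omega$ for $(2)$) is the right shape. The $(3)\Leftrightarrow(4)$ leg is essentially sound once two slips are repaired: a $\grp^b$-modular lattice of rank $r$ has discriminant module $(O_{F_\grp}/\grp^{b})^r$, not $(O_{F_\grp}/\grp^{2b})^r$ (harmless for your count); and the claim that $Q_\grp(L_\grp)=O_{F_\grp}$ forces a nonzero unimodular Jordan block is false at dyadic places --- $\langle 1,1,1\rangle$ over $\Z_2$ has norm ideal $\Z_2$ but is $2$-modular for the associated bilinear form, so it has no unimodular component. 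Fortunately the counting argument does not need that claim: with no unimodular block the discriminant module has three nontrivial cyclic summands and $(3)$ fails anyway.

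Two genuine gaps remain. First, in $(4)\Rightarrow(1)$ the asserted dichotomy ``maximal if $Q_\grp(z)$ is a unit, Eichler of level $\grp^{\ord_\grp Q_\grp(z)}$ otherwise'' is wrong: when $U_\grp$ is residually anisotropic (so $\Clf^0(U_\grp)\otimes\grk_\grp\cong\grk_\grp'$) and $\ord_\grp Q_\grp(z)\ge 1$, the order $\calO_{L_\grp}$ has Eichler invariant $-1$, and for $\ord_\grp Q_\grp(z)\ge 2$ it sits in the split algebra without being maximal or Eichler (e.g.\ $\langle 1,u,p^2\rangle$ over $\Z_p$ with $p$ odd and $-u$ a nonsquare). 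The conclusion $e_\grp(\calO_{L_\grp})\neq 0$ survives, but the case analysis must be redone with three outcomes $e_\grp\in\{2,1,-1\}$; moreover the contrapositive direction (non-unimodular Jordan part of rank $\ge 2$ implies $\calO_{L_\grp}/\grJ(\calO_{L_\grp})\cong\grk_\grp$) is only gestured at, and that is exactly where the dyadic bookkeeping you flag becomes unavoidable. Second, $(2)\Leftrightarrow(3)$ is asserted rather than proved: you never state Brzezinski's actual definition of $\Omega(L_\grp)$, only a guess at what it ``measures,'' so that leg currently has no content. If the aim is merely to establish the proposition rather than to reprove the literature, the efficient course is the paper's: cite Brzezinski for $(1)\Leftrightarrow(2)$ and the known lattice lemma for the rest.
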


\begin{proof}
    The equivalence (1) $\Leftrightarrow$ (2) is due to Brzezinski \cite[Proposition~2.7]{Brzezinski-Spinor-Class-gp-1983}. The equivalences (1) $\Leftrightarrow$ (3) $\Leftrightarrow$ (4) are given in \cite[Lemma~5.14]{hein2025computinghilbertmodularforms}.
\end{proof}

\section*{Acknowledgments}
Xue is partially supported by the National Natural Science Foundation
of China grants No.~12271410 and No.~12331002. He thanks Chia-Fu Yu for helpful discussions.

\bibliographystyle{hplain}
\bibliography{TeXBiB}

\end{document}